\numberwithin{equation}{section}
\newtheorem{theorem}{Theorem}[section]
\newtheorem{proposition}[theorem]{Proposition}
\newtheorem{lemma}[theorem]{Lemma}
\newtheorem{corollary}[theorem]{Corollary}
\newtheorem{assumption}[theorem]{Assumption}
\newtheorem{question}[theorem]{Question}
\theoremstyle{definition}
\newtheorem{definition}[theorem]{Definition}
\newtheorem{example}[theorem]{Example}
\newcommand{\N}{\mathbb{N}}
\newcommand{\Z}{\mathbb{Z}}
\newcommand{\Q}{\mathbb{Q}}
\newcommand{\R}{\mathbb{R}}
\newcommand{\T}{\mathbb{T}}
\renewcommand{\S}{\mathbb{S}}
\newcommand{\n}{\mathcal{N}}
\newcommand{\card}{\operatorname{card}}
\newcommand{\esp}[1]{\mathbb{E}\left[#1\right]}
\newcommand{\var}[1]{\operatorname{Var}\left(#1\right)}
\newcommand{\corr}[2]{\operatorname{corr}\left(#1,#2\right)}
\newcommand{\distZ}[1]{\operatorname{dist}\left(#1,\mathbb{Z}\right)}
\renewcommand{\cos}[1]{\operatorname{cos}\left(#1\right)}
\renewcommand{\sin}[1]{\operatorname{sin}\left(#1\right)}
\renewcommand{\exp}[1]{\operatorname{exp}\left(#1\right)}
\newcommand{\borel}[1]{\mathcal{B}\left(\mathcal{C}^{{#1}}(\overline{\Omega})\right)}
\newcommand{\Cyl}[1]{\operatorname{Cyl}^{#1}}
\title{Nodal replication of Planar Random Waves}
\author{Lo\"ic Thomassey}
\address{Laboratoire MAP5, Universit\'e Paris-Cit\'e}
\email{loic.thomassey@etu.u-paris.fr}
\author{Rapha\"el Lachi\`eze-Rey}
\email{raphael.lachieze-rey@parisdescartes.fr}
\begin{document}

	\maketitle
	\begin{abstract}
		We study the \textit{almost periods} of the eigenmodes of flat planar manifolds in the high energy limit. We prove in particular that the Gaussian Arithmetic Random Waves replicate almost identically at a scale at most  $\ell_{n}:= n^{-\frac{1}{2}}\exp{\n_n}$, where $\n_n$ is the number of ways $n$ can be written as a sum of two squares. It provides a qualitative interpretation of the  \textit{full correlation phenomenon} of the nodal length, which is known to happen at scales larger than $\ell_{n}':= n^{-1/2}\n_{n}^{A}.$ We provide also a heuristic with a toy model pleading that the minimal scale of replication should be closer to $\ell_{n}'$ than $\ell_{n}.$
	\end{abstract}
	
	\tableofcontents

	{\bf Keywords:} Gaussian fields, Nodal sets, Arithmetic Random Waves, Almost periodic fields, Gauss circle problem

\newcommand{\red}{\color{red}}
\section{Introduction}

This paper investigates random fields of the form
\begin{equation*}
	f(t)=\sum_{\lambda}a_\lambda e_\lambda(t)
\end{equation*}
where $t$ belongs either to the Euclidean space $\mathbb{R}^d$ or to the torus $\T^d$,
\begin{equation*}
	e_{\lambda}(t) := \exp{2i\pi \langle t,\lambda \rangle}, \quad t \in \T^{d},
\end{equation*}
and the $\lambda$ are finitely many deterministic wave vectors.\medbreak

A primary motivation is the zero set of the random Laplace eigenfunctions on the two-dimensional flat torus $\T^{2} = \R^{2} \slash \Z^{2}$. In this setting, the sequence of eigenvalues, or energy levels, are explicitly given by
\begin{equation*}
	E_{n} := 4\pi^{2}n,
\end{equation*}
where $n$ is expressible as a sum of two squares,
\begin{equation*}
	n \in \mathcal{S} := \{a^{2}+b^{2} : (a,b) \in \Z^{2}\}.
\end{equation*}
and the corresponding eigenspace $\mathcal{E}_{n}$ is  spanned by the $\mathbb{L}^{2}$-orthonormal Fourier basis, with $\lambda$ belonging to the set of frequencies
\begin{equation*}
	\Lambda_{n} := \{\lambda \in \Z^{2}:\lambda_{1}^{2}+\lambda_{2}^{2} = n \}.
\end{equation*}
It is a finite-dimensional vector space whose dimension
\begin{equation*}
	\mathcal{N}_{n} := \card \Lambda_{n}
\end{equation*}
is equal to the number of ways an integer can be written as a sum of two squares. The behaviour of $\mathcal{N}_{n}$ is well-understood and it is known that $n \in \mathcal{S}$ if and only if every prime divisor of $n$ congruent to $3$ modulo $4$ has an even valuation. In the latter case, $\mathcal{N}_{n}$ has an explicit formula related to the prime decomposition of $n$, that is,
\begin{equation}
\label{eq:decomp}
	\mathcal{N}_{n} = 4\prod_{i=1}^{k}(1+\alpha_{i}),
\end{equation}
where
\begin{equation*}
	n=2^{\alpha}p_{1}^{\alpha_{1}}...p_{k}^{\alpha_{k}}q_{1}^{2\beta_{1}}...q_{l}^{2\beta_{l}}
\end{equation*}
with $p_{i}\equiv 1 \pmod{4}$ and $q_{i} \equiv 3 \pmod{4}$ prime numbers.\medbreak

The preceding formula shows that the sequence $\mathcal{N}_{n}$ is subject
to large fluctuations. Yet, if we exclude some exceptional values, it is always possible to consider a subsequence of integers $n$ for which the following assumptions hold.\medbreak

	There is a density $1$ subset $  \mathcal  S  '\subset  \mathcal{S}$ of integers $n$   such that
	\begin{equation}\label{equation 1.1}
		\mathcal{N}_{n} = \log(n)^{\log(2)/2+o(1)},
	\end{equation}
	in the sense that for every $0 < \kappa < \frac{\log(2)}{2}$
	\begin{equation}\label{equation 1.2}
		\mathcal{N}_{n} \geq \log(n)^{\kappa}
	\end{equation}
	for sufficiently large  $n\in  \mathcal{S}' $, and such that the angular distribution
	\begin{equation*}
		\widetilde{\mu_{n}} := \frac{1}{\mathcal{N}_{n}}\sum_{\lambda \in \Lambda_{n}} \delta_{\lambda / \sqrt{n}}
	\end{equation*}
	converges to the Haar measure $\sigma$ on $\S^{1}$ for the weak-$\ast$ topology as $n \in \mathcal{S}'\to \infty $. Moreover, the Kolmogorov distance between $\sigma$ and $\widetilde{\mu_{n}}$ satisfies the bound
	\begin{equation}\label{equation 1.3}
		\operatorname{Kol}(\widetilde{\mu_{n}}, \sigma) := \sup_{\Gamma \text{arc on } \S^{1}} |\widetilde{\mu_{n}}(\Gamma)-\sigma(\Gamma)|\leq \frac{1}{\log(n)^{\rho}},
	\end{equation}
	for any $\rho < \frac{1}{2}\log(\pi/2)$  and $n$ large enough.\medbreak

For the interested reader, discussions related to these results are available respectively in \cite{BMW2017} and \cite[Theorem 1]{EH1999}. From now on, we we only consider $n\in \mathcal S'$ so that equations \eqref{equation 1.1}-\eqref{equation 1.3} are satisfied throughout the article.

\subsection{Planck scale Arithmetic Random Wave}

To study the properties of a generic Laplace eigenfunctions on the torus, it is possible to embed the previous model in a probability space. Notice first that every real-valued eigenfunction of the Laplace-Beltrami operator can be uniquely written as
\begin{equation*}
	T_{n}(t):=\frac{1}{\sqrt{\n_n}}\sum_{\lambda \in \Lambda_{n}} a_{\lambda} e_{\lambda}(t), \quad t \in \T^{2},
\end{equation*}
where $(a_{\lambda})$ is a sequence of complex numbers satisfying $a_{-\lambda}=\overline{a_{\lambda}}$. If the $a_{\lambda}$ are chosen as a realisation of independent and identically distributed complex standard random normal variables, then the resulting random field is an infinitely differentiable, centered, stationary Gaussian process with covariance function
\begin{equation*}
	r_{n}(t):=\frac{1}{\mathcal{N}_{n}}\sum_{\lambda \in \Lambda_{n}} \cos{2\pi\langle \lambda,t \rangle},\quad t\in \T^{2}.
\end{equation*}

As expected, it satisfies almost surely the partial differential equation
\begin{equation*}
	\Delta_{\T^{2}}T_{n}+E_{n}T_{n} = 0.
\end{equation*}

Alternatively, instead of describing the preceding field with its covariance function, it is sometimes preferable to work directly with its spectral measure
\begin{equation*}
	\mu_{n} := \frac{1}{\mathcal{N}_{n}} \sum_{\lambda \in \Lambda_{n}} \delta_{\lambda},
\end{equation*}
which, in that instance, is a purely atomic probability measure concentrated on the circle of radius $\sqrt{n}$.\medbreak

It is natural to rescale the previous Gaussian process by a factor $n^{-1/2}$, also known as the {\it Planck scale}. The notations $\widetilde{.}$ will from now on refer to the rescaled variables.
\begin{equation*}
	\begin{aligned}
		&\widetilde{T}_{n}(t) := T_{n}\left(\frac{t}{\sqrt{n}}\right), \quad t \in \sqrt{n}\T^{2},\\
		&\widetilde{r}_{n}(t) := r_{n}\left(\frac{t}{\sqrt{n}}\right), \quad t \in \sqrt{n}\T^{2},\\
		&\widetilde{\mu_{n}} := \frac{1}{\mathcal{N}_{n}}\sum_{\lambda \in \Lambda_{n}}\delta_{\frac{\lambda}{\sqrt{n}}}.
	\end{aligned}
\end{equation*}

As detailed later, the factor $n^{-1/2}$ can be viewed as the characteristic distance of the model. In domains whose diameter is below this threshold, $T_{n}$ will behave almost like an eigenvalue of the Laplacian $\Delta$ on $\R^{2}$ in the sense that $\widetilde{T}_{n}$ will converge (in some sense) to a Berry Random Wave $T$, that is the unique Gaussian process on $\R^{2}$ which is stationary, rotation-invariant, ergodic and which satisfies almost-surely the equation
\begin{equation*}
	\Delta T + 4\pi^{2} T = 0.
\end{equation*}

The covariance function associated with $T$ is the so-called Bessel function of the first kind,
\begin{equation*}
	\widetilde{r}(t):=J_{0}(2\pi|t|).
\end{equation*}
As a consequence of \eqref{equation 1.3},$ \widetilde{T}_{n}$ converges to $T$ on a scale slightly larger than the Planck scale: 
\begin{lemma}\cite[Proposition 5.1]{DNP2020} \label{lemma 1.3}
	Let $\alpha \in \N^{2}$, then there is some $\rho>0$ such that
	\begin{equation*}
		 \sup_{|t| \leq  \log(n)^{\rho}}|\partial^{\alpha}\widetilde{r}_{n}(t)-\partial^{\alpha}\widetilde{r}(t)| \longrightarrow 0.
	\end{equation*}
\end{lemma}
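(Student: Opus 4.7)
The idea is to realise both $\widetilde{r}_n$ and $\widetilde{r}$ as integrals over $\mathbb{S}^1$ against $\widetilde{\mu}_n$ and the Haar measure $\sigma$ respectively, and then to quantify the difference using the Kolmogorov bound \eqref{equation 1.3} via a Koksma--Hlawka type inequality. Since every $\lambda/\sqrt{n}$ lies on $\mathbb{S}^1$, the covariance rewrites as $\widetilde{r}_n(t)=\int_{\mathbb{S}^1}\cos{2\pi\langle \xi,t\rangle}\,d\widetilde{\mu}_n(\xi)$, while the classical integral representation of the Bessel function gives $\widetilde{r}(t)=J_0(2\pi|t|)=\int_{\mathbb{S}^1}\cos{2\pi\langle \xi,t\rangle}\,d\sigma(\xi)$. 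Differentiating under the integral (valid since $\mathbb{S}^1$ is compact) yields, for every multi-index $\alpha\in\N^2$,
\begin{equation*}
\partial^\alpha \widetilde{r}_n(t)-\partial^\alpha \widetilde{r}(t)=\int_{\mathbb{S}^1} g_{t,\alpha}(\xi)\,d(\widetilde{\mu}_n-\sigma)(\xi),
\end{equation*}
where $g_{t,\alpha}(\xi)=(-2\pi\xi_1)^{\alpha_1}(-2\pi\xi_2)^{\alpha_2}\,\varphi_{|\alpha|}(2\pi\langle \xi,t\rangle)$ and $\varphi_k$ is the $k$-th derivative of cosine, i.e.\ a signed $\operatorname{cos}$ or $\operatorname{sin}$.

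The second step is to apply the Koksma--Hlawka type inequality on the torus $\mathbb{S}^1$: for every function $g$ of bounded total variation $V(g)$,
\begin{equation*}
\left|\int_{\mathbb{S}^1} g\,d(\widetilde{\mu}_n-\sigma)\right|\le V(g)\cdot\operatorname{Kol}(\widetilde{\mu}_n,\sigma).
\end{equation*}
Parameterising $\xi=(\cos{\theta},\sin{\theta})$ and writing $\langle \xi,t\rangle=|t|\cos{\theta-\theta_t}$ for an angle $\theta_t$ depending on $t$, a direct computation of $\partial_\theta g_{t,\alpha}$ followed by integration over $[0,2\pi]$ yields $V(g_{t,\alpha})\le C_\alpha(1+|t|)$: the $|t|$ factor appears solely when the $\theta$-derivative falls on the oscillating factor $\varphi_{|\alpha|}(2\pi|t|\cos{\theta-\theta_t})$, while the derivatives in $\theta$ of the polynomial $\xi^\alpha$ contribute only a constant depending on $\alpha$.

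Combining these estimates with \eqref{equation 1.3}, for any fixed $\rho^*<\tfrac{1}{2}\log(\pi/2)$ and all sufficiently large $n$,
\begin{equation*}
\sup_{|t|\le L}\bigl|\partial^\alpha \widetilde{r}_n(t)-\partial^\alpha \widetilde{r}(t)\bigr|\le C_\alpha(1+L)\log(n)^{-\rho^*}.
\end{equation*}
Taking $L=\log(n)^{\rho}$ with any $0<\rho<\rho^*$ makes the right-hand side tend to $0$, which is precisely the required uniform convergence. The main obstacle is controlling the $|t|$-dependence of $V(g_{t,\alpha})$: the crucial observation is that each additional $t$-derivative only multiplies the integrand by a bounded function of $\xi$, so the total variation stays linear in $|t|$ regardless of $|\alpha|$; this linear growth is exactly what allows the polylogarithmic window $|t|\le\log(n)^\rho$.
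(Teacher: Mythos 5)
The paper itself does not prove this lemma; it cites \cite[Proposition 5.1]{DNP2020} and uses it as a black box, so there is no internal proof to compare against. Your argument is a correct and self-contained derivation: the representation
$\widetilde{r}_n(t)=\int_{\S^1}\cos{2\pi\langle\xi,t\rangle}\,d\widetilde{\mu}_n(\xi)$,
$\widetilde{r}(t)=J_0(2\pi|t|)=\int_{\S^1}\cos{2\pi\langle\xi,t\rangle}\,d\sigma(\xi)$
is correct, differentiation under the integral on the compact $\S^1$ is legitimate, and Koksma's inequality on the circle does bound $\left|\int g\,d(\widetilde{\mu}_n-\sigma)\right|$ by $V(g)$ times the arc discrepancy, which is exactly $\operatorname{Kol}(\widetilde{\mu}_n,\sigma)$ up to a harmless constant. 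The key quantitative input, $V(g_{t,\alpha})\le C_\alpha(1+|t|)$, is right: when $\partial_\theta$ falls on the oscillatory factor $\varphi_{|\alpha|}(2\pi|t|\cos{\theta-\theta_t})$ it produces a factor of size $O(|t|)$, whereas derivatives of the trigonometric polynomial $\xi_1^{\alpha_1}\xi_2^{\alpha_2}$ are $O(1)$, so the variation grows only linearly in $|t|$ independently of $|\alpha|$. Combined with \eqref{equation 1.3}, choosing $\rho<\rho^*<\frac12\log(\pi/2)$ closes the argument. This is precisely the Koksma--Hlawka route one would expect the cited reference to take, so your proof is both correct and in the same spirit as the source.
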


\subsection{Nodal set}

An object intensively studied in the literature is the zero set $T_n^{-1}\left(0\right)$, known as the nodal set or nodal lines. Almost surely, it is a one-dimensional smooth manifold whose length on a subdomain $\Omega \subset \T^{2}$ is denoted by
\begin{equation*}
	\mathcal{L}_{n}(\Omega) := \operatorname{len}\left(T_{n}^{-1}\left(0\right) \cap \Omega\right),
\end{equation*}
or simply
\begin{equation*}
	\mathcal{L}_{n,s} := \operatorname{len}\left(T_{n}^{-1}(0) \cap B(s)\right)
\end{equation*}
when $\Omega$ is $B(s)$, the centered ball of radius $s$.\medbreak

Oravecz, Rudnick, Wigman \cite{ORW} were able to compute the expected nodal length
\begin{equation*}
	\esp{\mathcal{L}_{n}(\Omega)} = \frac{|\Omega|}{2\sqrt{2}}\sqrt{E_{n}}.
\end{equation*}

Higher moments were harder to resolve as Kac-Rice formulas require in those cases a deep understanding of the behaviour of the correlation function $r_{n}$ and its derivatives, turning the preceding integral computation into an arithmetical problem intrinsically related to the distribution of the spectral measure $\mu_{n}$ alongside the circle of radius $\sqrt{n}$. It was only in 2011 that Krishnapur, Kurlberg and Wigman \cite{KMKW2011} were finally able to evaluate the asymptotic leading term of the variance of the nodal length on the full torus in the high energy limit :
\begin{equation*}
	\var{\mathcal{L}_{n}\left(\T^{2}\right)} \asymp \frac{1}{512}\frac{E_{n}}{\mathcal{N}_{n}^{2}}
\end{equation*}

The preceding variance is smaller than what was initially expected, but the result corroborates Berry's cancellation phenomenon, a principle related to the length of the nodal set of Gaussian Random Waves, which was first noticed by Berry in his seminal paper \cite{Be2002}. The computation of the variance on subdomains is still possible but it requires a more careful analysis. It was done in \cite{BMW2017}, an article in which the authors derived the following asymptotic
\begin{equation*}
	\var{\mathcal{L}_{n,s_{n}}} \asymp \frac{|B(s_{n})|}{512}\frac{E_{n}}{\mathcal{N}_{n}^{2}},
\end{equation*}
valid as long as the radius $s_{n}$ is  above Planck-scale, that is $s_{n}>n^{-1/2+\varepsilon}$ where $\varepsilon>0$. The preceding estimate immediately implies the full-correlation of the nodal length on the torus :

\begin{theorem}\label{theorem 1.1}
	For every $\varepsilon>0$,
	\begin{equation*}
		\sup_{s > n^{-1/2+\varepsilon}}\corr{\mathcal{L}_{n}}{\mathcal{L}_{n,s}} \longrightarrow 1.
	\end{equation*}
\end{theorem}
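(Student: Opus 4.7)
Since $\corr{\mathcal{L}_n}{\mathcal{L}_{n,s}} \leq 1$, the cleanest way to establish the supremum statement is to exhibit, for each large $n$, a single admissible $s_n$ at which the correlation equals $1$. The flat torus $\T^2$ has diameter $\sqrt{2}/2$, so for any $s \geq \sqrt{2}/2$ one has $B(s) = \T^2$ and hence $\mathcal{L}_{n,s} = \mathcal{L}_n$, giving $\corr{\mathcal{L}_n}{\mathcal{L}_{n,s}} = 1$ identically. Since $n^{-1/2+\varepsilon} \to 0$, the value $s_n := \sqrt{2}/2$ lies in the admissible range $\{s > n^{-1/2+\varepsilon}\}$ for all $n$ large enough, so the supremum is exactly $1$ from some $n$ on.

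To bring out the quantitative content implicit in the BMW variance asymptotic (the sense in which, according to the paper, ``the preceding estimate immediately implies'' full correlation), I would decompose $\mathcal{L}_n = \mathcal{L}_{n,s} + \mathcal{L}_n(B(s)^c)$ and expand
$$\cov{\mathcal{L}_n}{\mathcal{L}_{n,s}} = \var{\mathcal{L}_{n,s}} + \cov{\mathcal{L}_{n,s}}{\mathcal{L}_n(B(s)^c)}.$$
Expanding $\var{\mathcal{L}_n}$ in the same way, and substituting the three BMW estimates for $\var{\mathcal{L}_n}$, $\var{\mathcal{L}_{n,s}}$ and $\var{\mathcal{L}_n(B(s)^c)}$, the constant $\frac{1}{512}$ cancels at leading order between the three terms and the cross-covariance is forced to be of order strictly smaller than $\frac{E_n}{\mathcal{N}_n^2}$. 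Plugging back into the correlation gives
$$\corr{\mathcal{L}_n}{\mathcal{L}_{n,s}} \sim \sqrt{|B(s)|/|\T^2|},$$
which tends to $1$ precisely as $|B(s)| \uparrow |\T^2|$, recovering the supremum statement in a more quantitative form and making the role of the BMW formula transparent.

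The only real technical step, and the one I expect to be the main (minor) obstacle, is justifying the BMW variance asymptotic on the complementary domain $B(s)^c$, which is not itself a ball. The BMW argument relies on a Kac--Rice computation whose leading behaviour depends on the integration domain only through its area and a mild regularity assumption; both conditions are met by $B(s)^c$ as soon as its area sits safely above Planck scale, which holds along the extremizing sequence. Adapting BMW's oscillatory-integral estimates to this setting is a routine but careful check; once it is in hand, both the trivial and the quantitative arguments above yield the theorem.
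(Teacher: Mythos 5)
Your first observation --- that a literal reading of the $\sup$ makes the statement vacuous, since any $s\geq\sqrt{2}/2$ gives $B(s)=\T^2$ and hence correlation exactly $1$ --- is arithmetically correct, but should be read as flagging a typo rather than as a proof. The sentence following the theorem (``one may retrieve the full nodal length on the torus by simply observing the nodal length in a window slightly larger than Planck-scale'') only makes sense if the correlation tends to $1$ for \emph{every} sequence $s_n>n^{-1/2+\varepsilon}$, i.e.\ if the $\sup$ is replaced by an $\inf$. That is the actual content of full correlation in \cite{BMW2017}, and the theorem should be read that way.

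Your quantitative argument should then have been a red flag rather than a confirmation. From $\mathcal{L}_n=\mathcal{L}_{n,s}+\mathcal{L}_n(B(s)^c)$ together with a variance asymptotic that is \emph{linear} in $|B(s)|$, one indeed obtains $\corr{\mathcal{L}_n}{\mathcal{L}_{n,s}}\sim\sqrt{|B(s)|/|\T^2|}$ exactly as you say. But this does not ``recover'' full correlation --- it \emph{contradicts} it: for $s_n=n^{-1/2+\varepsilon}$ one has $|B(s_n)|\to 0$, so your formula forces the correlation to $0$, not $1$. The resolution is that the BMW variance asymptotic is quadratic, not linear, in the area: $\var{\mathcal{L}_{n,s_n}}\sim c\,|B(s_n)|^2 E_n/\mathcal{N}_n^2$; the factor $|B(s_n)|$ in the paper's display is a typo. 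Re-running your decomposition with the correct quadratic scaling, writing $a=|B(s)|$ and $V=\var{\mathcal{L}_n}$, gives $\var{\mathcal{L}_{n,s}}\sim a^2V$, $\var{\mathcal{L}_n(B(s)^c)}\sim(1-a)^2V$, hence $\cov{\mathcal{L}_{n,s}}{\mathcal{L}_n(B(s)^c)}\sim a(1-a)V$, so $\cov{\mathcal{L}_n}{\mathcal{L}_{n,s}}\sim aV$ and $\corr{\mathcal{L}_n}{\mathcal{L}_{n,s}}\sim aV/\sqrt{V\cdot a^2V}=1$ \emph{uniformly} in $a$. That is the statement the theorem intends. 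Note that the paper itself offers no proof beyond citing \cite{BMW2017}; the theorem is motivational, so your job here was mainly to reconstruct why the BMW asymptotic implies it, and the linear-vs-quadratic discrepancy is precisely the sanity check that should have caught the error.
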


This unexpected theorem has a nice consequence. In the high energy limit, one may retrieve the full nodal length on the torus $\T^{2}$ by simply observing the nodal length in a window slightly larger than Planck-scale. It is natural to ask whether the previous theorem still holds below Planck-scale and the answer is negative (see Section \ref{sec:almost-per}). As of today, the phase transition between no correlation and full-correlation is suspected to happen at some $s = n^{-1/2}\log(n)^{A}$ and some lower and upper bounds for $A$ have been recently derived in \cite{Sa2020} and \cite{DNP2020}.

\subsection{Full correlation and replication}

The proof of theorem \ref{theorem 1.1} relies heavily on a careful study of the so-called spectral quasi-correlations and it requires an extensive use of deep arithmetical results regarding the angular distribution $\widetilde{\mu_{n}}$. An explanation of this phenomenon, and of why full correlation does not occur on the sphere, can be found in the work of Todino  \cite[Section 2.1]{Tod20}.\medbreak

The reader is encouraged to view this article as an alternative  qualitative explanation of Theorem \ref{theorem 1.1}. We show that not only the length of the nodal line, but the whole field, replicates almost identically at a scale $\tau_{n}$ that we try to estimate, and we advocate that it should be close to the scale of full correlation. It makes sense that if the field replicates almost identically at scale $\tau _{n}$, then by a continuity argument its nodal lines should replicate as well, and this is the purpose of Theorem \ref{proposition 2.3} and Section \ref{sec:lines}.

A rough explanation of this result is that replication occurs on the torus  because the dimension of the eigenspace on the torus is logarithmic, whereas it is polynomial on other manifolds, such as higher dimensional tori or the sphere (see Section \ref{sec:manifolds}).  We stress that our results do not provide another proof of full correlation, for which quasi-correlations are the right tool  \cite{BMW2017,Tod20}. Let us first explain what we mean by  {\it almost replication}.\\

\textbf{A sequence of almost periods} (of the Arithmetic Random Wave) is a sequence $\tau_{n} \in \T^{2}$ such that a.s.
\begin{equation}\label{equation 2.1}
	\sup_{t \in \T^{2}}|T_{n}(t+\tau_{n})-T_{n}(t)| \underset{n \to +\infty}{\longrightarrow} 0
\end{equation}
with the complementary condition
\begin{equation}\label{equation 2.2}
	\liminf \sqrt{n}|\tau_{n}| \geq 1.
\end{equation}

This latter requirement ensures that the aforementioned almost periods are far from $0$ with respect to the characteristic distance of the model. This is a necessary condition as nothing prevents otherwise a sequence of almost periods to be arbitrary close to $0$. 
Hence, upon the existence of such $\tau _{n}$, we shall say that the random waves  {\it almost replicate} at scale $\tau _{n}$, and we try in the rest of the paper to give an upper bound as small as possible on an almost period of $\tau _{n}$.

\medbreak

\subsubsection{Almost periodicity of eigenmodes}
   
As an introduction, and instead of studying directly full-correlation on the torus $\T^{2}$, one may start with a similar result on the $1$-dimensional torus $\T^{1}$. In this case, the set of eigenvalues consists of the energy levels $E_{n}=(2\pi)^{2}n^{2}$, $n \in \N$, and an orthonormal basis of the associated eigenspace is given by the standard Fourier basis $\cos{2\pi n t}$ and $\sin{2\pi nt}$. In these settings, the Random Wave Model is a periodic centered Gaussian process whose covariance function is $\cos{2\pi n t}$ and an explicit formula is given by
\begin{equation*}
	X_{n}(t) := a \cos{2\pi n t} + b \sin{2\pi n t}, \quad t \in \T^{1},
\end{equation*}
where $a,b$ are independent standard normal distribution.\medbreak

The characteristic distance of this model is $1/n$, which is also the smallest period of $T_{n}$. In this instance, full-correlation has a straightforward explanation which boils down to one word, \textit{periodicity}. In fact, if the zero set of $T_{n}$ is known on an open ball slightly larger than Planck scale, then it is also known on a full period of $X_{n}$, and \textit{a fortiori} on the whole torus. Last but not least, notice that the periodicity of $X_{n}$ is encoded by the underlying periodicity of its correlation function. \medbreak

It is very tempting to extend the previous reasoning to the torus $\T^{2}$ but unfortunately, the covariance function is no longer periodic at Planck scale. Yet, it is possible to circumvent in some ways this issue if the periodicity assumption is relaxed to almost periodicity. This idea is inspired by the study of almost periodic Gaussian processes in \cite{LR2020}.\medbreak

Say a function $f:\mathbb{R}^d\to\mathbb{C}$ is (Bohr-)almost periodic if and only if, for any $\varepsilon > 0$, there exists $T_{0}(\varepsilon)>0$ such that every Euclidean ball of radius $T_{0}(\varepsilon)$ contains an $\varepsilon$-almost period $\tau$ in the sense that
\begin{equation*}
	\sup_{t \in \R^{d}}|f(t+\tau)-f(t)| \leq \varepsilon
\end{equation*}
The covariance function $r_{n}$ of the Arithmetic Random Wave being a trigonometric polynomial, it is in particular an almost periodic function. Thus, if $\tau$ is a $\varepsilon$-almost period of $r_{n}$, then
\begin{equation*}
	r_{n}(\tau)=\corr{T_{n}(t)}{T_{n}(t+\tau)} \geq 1-\varepsilon.
\end{equation*}
$T_{n}(t)$ and $T_{n}(t+\tau)$ are hence heavily correlated and one might expect that if $t$ is a zero of $T_{n}$, then there is probably another zero in some neighbourhood of $t+\tau$, both the size of the neighbourhood and the probability depending on the precision $\varepsilon$. Hence, if the zeros of $T_{n}$ are known on a window $\Omega$, then they are also known on the translated windows $\tau +\Omega$, $2\tau +\Omega$ and so forth, at least with high probability. If the analogy with the $1$-dimensional torus $\T^{1}$ stills holds, the almost periods of $T_{n}$ must be of order slightly larger than Planck-scale to get full-correlation.

\section{Almost periodicity and replication}

The sections introduces the main results proved in this article, and the discussion around the estimation of the pseudo-period. First, we recall that the covariance function of the Arithmetic Random Wave is a periodic function on the lattice $\Z^{2}$. Our aim is to exhibit smaller  almost periods of the order slightly above Planck scale, that is slightly above $n^{-1/2}$.

\subsection{Existence of almost periods in the Arithmetic Random Wave}
\label{sec:almost-per}
The first result of this article is the following theorem related to the existence of almost periods.

\begin{theorem}\label{theorem 2.1}
	For $\alpha >0$, there exists a sequence $\tau_{n}$ of almost periods  such that asymptotically for $n\in \mathcal  S'$
	\begin{equation}\label{equation 2.1.1}
		|\tau_{n}| \leq \exp{\n_{n}^{1+\alpha}} n^{ -1/2}
	\end{equation}
	\medbreak
	and  the error term
	\begin{equation*}
		\varepsilon_{n} := \sup_{t \in \T^{2}} |r_{n}(t+\tau_{n})-r_{n}(t)|
	\end{equation*}
	is asymptotically bounded above by
	\begin{equation}\label{equation 2.1.2}
		\varepsilon_{n} \leq e^{-\log(n)^{\kappa}}
	\end{equation}
	for  $\kappa>0$ depending on $\alpha$.
\end{theorem}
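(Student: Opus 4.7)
The plan is to reduce the statement to a simultaneous Diophantine approximation problem and solve it by a Dirichlet/pigeon-hole argument, then lift the covariance estimate to the field itself.

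First I would observe that, since cosine is $2\pi$-Lipschitz and $1$-periodic when its argument is of the form $2\pi(\cdot)$,
\begin{equation*}
    |r_{n}(t+\tau)-r_{n}(t)| \leq \frac{2\pi}{\mathcal{N}_{n}}\sum_{\lambda\in\Lambda_{n}}\distZ{\langle\lambda,\tau\rangle}.
\end{equation*}
Hence it suffices to find $\tau_n\in\T^2$ such that $\distZ{\langle\lambda,\tau_n\rangle}\leq \delta_n$ for \emph{every} $\lambda\in\Lambda_n$ simultaneously, with $\delta_n\leq \frac{1}{2\pi}e^{-\log(n)^{\kappa}}$. Thanks to the symmetry $-\lambda\in\Lambda_n$, these are only $\mathcal{N}_n/2$ independent constraints, which I encode through the map $\Phi_n:\R^{2}\to\T^{\mathcal{N}_n/2}$, $\Phi_n(\tau)=(\langle\lambda_i,\tau\rangle\bmod 1)_{i}$, after picking one representative per antipodal pair.

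Next I would carry out the pigeon-hole. Set $s_n := n^{-1/2}$ and $L_n := \exp{\mathcal{N}_n^{1+\alpha}}\,n^{-1/2}$, and place a regular grid of $P_n=\lfloor L_n/s_n\rfloor^{2}=\lfloor\exp{\mathcal{N}_n^{1+\alpha}}\rfloor^{2}$ points in $[0,L_n]^{2}$. Cover $\T^{\mathcal{N}_n/2}$ by $\lceil 1/\delta_n\rceil^{\mathcal{N}_n/2}$ axis-aligned cubes of side $\delta_n$. As soon as
\begin{equation*}
    P_n \;\geq\; \delta_n^{-\mathcal{N}_n/2},
\end{equation*}
two grid points $v_j,v_k$ share a cube; the vector $\tau_n:=v_j-v_k$ then satisfies $|\tau_n|_\infty\geq s_n$ (so automatically $\sqrt{n}|\tau_n|\geq 1$, yielding \eqref{equation 2.2}), $|\tau_n|_\infty\leq L_n$, and $\distZ{\langle\lambda,\tau_n\rangle}\leq\delta_n$ for every $\lambda\in\Lambda_n$. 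Taking logarithms, the pigeon-hole condition reads $2\mathcal{N}_n^{1+\alpha}\geq \tfrac{\mathcal{N}_n}{2}\log(n)^{\kappa}$, i.e.\ $4\mathcal{N}_n^{\alpha}\geq\log(n)^{\kappa}$. Using the lower bound \eqref{equation 1.2}, $\mathcal{N}_n\geq\log(n)^{\eta}$ for any $\eta<\tfrac{\log 2}{2}$, this holds for $n$ large whenever $\kappa<\alpha\eta$, giving a valid $\kappa=\kappa(\alpha)>0$ and simultaneously \eqref{equation 2.1.1} and \eqref{equation 2.1.2}.

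It remains to pass from the covariance bound $\varepsilon_n$ to the uniform almost-sure statement \eqref{equation 2.1}, which I would handle by a standard Gaussian concentration argument. The centered stationary Gaussian field $U_n(t):=T_n(t+\tau_n)-T_n(t)$ has pointwise variance $2(1-r_n(\tau_n))\leq 2\varepsilon_n$, and since $U_n$ is a trigonometric polynomial with frequencies of modulus $\sqrt{n}$, Bernstein's inequality gives $\|\nabla U_n\|_\infty\lesssim\sqrt{n}\,\|U_n\|_\infty$. Combining a union bound over a $\T^{2}$-net of mesh $e^{-\log(n)^{\kappa}/4}$ with the Gaussian tail for each node, and then Borel--Cantelli, forces $\sup_{t\in\T^{2}}|U_n(t)|\to 0$ almost surely, establishing that $\tau_n$ is indeed an almost period in the sense of \eqref{equation 2.1}--\eqref{equation 2.2}.

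The main obstacle is the parameter balancing in the third step: the number of constraints $\mathcal{N}_n/2$, the required precision $\delta_n$, and the ambient box $L_n$ all interact through the pigeon-hole exponent, and it is precisely the poly-logarithmic lower bound \eqref{equation 1.2} on $\mathcal{N}_n$ which creates the slack needed to produce a positive $\kappa$ while keeping $|\tau_n|\leq\exp{\mathcal{N}_n^{1+\alpha}}n^{-1/2}$. A secondary technical point is the passage from $\varepsilon_n$ to a uniform Gaussian-field bound, but the trigonometric-polynomial structure and the sub-exponential precision $e^{-\log(n)^{\kappa}}$ leave a comfortable margin for a crude covering argument.
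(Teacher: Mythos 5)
Your argument for the covariance bound is essentially the paper's: reduce to simultaneous Diophantine approximation of the frequencies $\{\lambda\}_{\lambda\in\Lambda_n}$ by a pigeonhole principle, then balance the number of constraints $\sim\mathcal N_n$ against the precision $\delta_n\sim e^{-\log(n)^\kappa}$ and the size of the box $\sim e^{\mathcal N_n^{1+\alpha}}$, the slack coming from the arithmetic lower bound $\mathcal N_n\geq\log(n)^\eta$. The paper packages the pigeonhole step as a citation to Dirichlet's simultaneous approximation theorem (its Theorem~\ref{theorem 3.1}, proved by the identical grid-versus-cube counting); you re-derive it inline, using the antipodal symmetry $-\Lambda_n=\Lambda_n$ to halve the number of constraints, which is a cosmetic gain of a constant in the exponent and does not change the outcome. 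The Lipschitz reduction $|r_n(t+\tau)-r_n(t)|\leq\frac{2\pi}{\mathcal N_n}\sum_\lambda\distZ{\langle\lambda,\tau\rangle}$ and the resulting constraint $\kappa<\alpha\eta<\alpha\log(2)/2$ agree with the paper.

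Your final paragraph, passing from the covariance estimate to the almost-sure field statement, goes beyond what the paper's proof of Theorem~\ref{theorem 2.1} spells out (the paper defers this to a one-line appeal to the Borell--TIS inequality just after the theorem), and this is where you have a genuine technical gap: the mesh $e^{-\log(n)^\kappa/4}$ is far too coarse. Since $U_n(t)=T_n(t+\tau_n)-T_n(t)$ is a trigonometric polynomial whose frequencies all have modulus $\sqrt n$, Bernstein gives $\|\nabla U_n\|_\infty\lesssim\sqrt n\,\|U_n\|_\infty$, so to absorb the between-net-point oscillation into a constant multiple of the net maximum you need the mesh $h$ to satisfy $\sqrt n\,h\lesssim 1$, i.e.\ a Planck-scale mesh $h\lesssim n^{-1/2}$. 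Your $h=e^{-\log(n)^\kappa/4}$ is enormously larger than $n^{-1/2}$ (it only decays sub-polynomially), so the chaining step as written does not close. The fix is easy: take $h\sim n^{-1/2}$, giving only $O(n)$ net points, and the pointwise Gaussian tail with variance $2\varepsilon_n\leq 2e^{-\log(n)^\kappa}$ still beats the union bound super-polynomially, so Borel--Cantelli applies. Alternatively, follow the paper and use Borell--TIS directly (mean of the sup controlled by $\sqrt{\varepsilon_n\log n}$, deviations controlled by $\sqrt{\varepsilon_n}$), which sidesteps the explicit net.
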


The proof of Theorem \ref{theorem 2.1} relies on a well-known deterministic principle in Diophantine approximation, known as Dirichlet's approximation Theorem, which in our framework can be stated in the following manner.

\begin{theorem}[Dirichlet's approximation Theorem] \label{theorem 2.2}
	Let $\n\geq 1$   and consider an almost periodic function defined by
	\begin{equation*}
		f(t) = \frac{1}{\n}\sum_{k=1}^{\n} \cos{2 \pi \langle \gamma_{k}, t\rangle}, \quad t\in \R^{d}
	\end{equation*}
	with $\gamma_{k}$ some vectors in $\R^{d}$. \medbreak

	If $\varepsilon>0$, then there is an $\varepsilon$-almost period $\tau$, that is a vector $\tau \in \R^{d}$ for which
	\begin{equation*}
		\sup_{t \in \R^{d}} |f(t+\tau)-f(t)|\leq \varepsilon,
	\end{equation*}

	satisfying
	\begin{equation*}
		1 \leq |\tau| \leq (2\pi)^{\n/d}\varepsilon^{-\n/d}.
	\end{equation*}
\end{theorem}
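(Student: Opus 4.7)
The plan is to reduce the statement to a simultaneous Diophantine approximation problem for the scalars $\langle\gamma_k,\tau\rangle$ and then run a classical pigeonhole on a cubical lattice of candidate translations $\tau$. Starting from the identity $\operatorname{cos}(A)-\operatorname{cos}(B)=-2\operatorname{sin}\bigl(\tfrac{A+B}{2}\bigr)\operatorname{sin}\bigl(\tfrac{A-B}{2}\bigr)$ together with $|\operatorname{sin}(\pi u)|\leq\pi\operatorname{dist}(u,\mathbb{Z})$, one gets $|\operatorname{cos}(2\pi(x+y))-\operatorname{cos}(2\pi x)|\leq 2\pi\operatorname{dist}(y,\mathbb{Z})$ for all $x,y\in\mathbb{R}$. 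Applying this with $x=\langle\gamma_k,t\rangle$ and $y=\langle\gamma_k,\tau\rangle$, then averaging in $k$, yields
\begin{equation*}
\sup_{t\in\mathbb{R}^d}|f(t+\tau)-f(t)|\leq\frac{2\pi}{\mathcal{N}}\sum_{k=1}^{\mathcal{N}}\operatorname{dist}(\langle\gamma_k,\tau\rangle,\mathbb{Z}),
\end{equation*}
so it suffices to find $\tau$ with $1\leq|\tau|\lesssim(2\pi/\varepsilon)^{\mathcal{N}/d}$ such that every $\langle\gamma_k,\tau\rangle$ lies within $\varepsilon/(2\pi)$ of an integer.

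For this Diophantine step I would run the standard pigeonhole. Fix $L:=\lceil 2\pi/\varepsilon\rceil$ and $M:=\lceil L^{\mathcal{N}/d}\rceil+1$, consider the $M^d$ integer lattice points in $\{0,1,\dots,M-1\}^d\subset\mathbb{Z}^d$, and partition $\mathbb{T}^{\mathcal{N}}$ into $L^{\mathcal{N}}$ cubes of side $1/L$. The map $\tau\mapsto(\langle\gamma_k,\tau\rangle\bmod 1)_{k=1}^{\mathcal{N}}$ sends more than $L^{\mathcal{N}}$ lattice points into $L^{\mathcal{N}}$ cells, so two distinct points $p\neq q$ land in a common cell. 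Their difference $\tau:=p-q$ is a non-zero integer vector, hence $|\tau|\geq 1$, and $|\tau|\leq\sqrt{d}\,(M-1)\lesssim(2\pi/\varepsilon)^{\mathcal{N}/d}$; moreover $\operatorname{dist}(\langle\gamma_k,\tau\rangle,\mathbb{Z})\leq 1/L$ for every $k$ by construction. Plugging this into the preceding display bounds $\sup_t|f(t+\tau)-f(t)|$ by $2\pi/L\leq\varepsilon$, which is the desired inequality.

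The argument is essentially the classical Dirichlet simultaneous approximation theorem applied to the frequencies $\gamma_1,\dots,\gamma_{\mathcal{N}}$, and I do not expect any deep obstacle. The only mildly delicate point is tracking constants to match the exact prefactor $(2\pi)^{\mathcal{N}/d}$ in the statement: the stray $\sqrt{d}$ coming from converting the natural $\ell^{\infty}$-bound $|\tau|_\infty\leq M-1$ into an $\ell^2$-bound can be absorbed either by refining the ceilings in the definition of $M$ and $L$, or by partitioning $\mathbb{T}^{\mathcal{N}}$ into rectangular cells adapted to the desired geometry. No additional ideas beyond the pigeonhole principle are needed.
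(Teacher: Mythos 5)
Your proposal is correct and follows essentially the same two-step strategy as the paper: first reduce to a simultaneous Diophantine approximation problem via the pointwise bound $|\cos{2\pi(x+y)}-\cos{2\pi x}|\leq 2\pi\,\distZ{y}$ (the paper derives the same $2\pi$ constant from the mean-value inequality applied to $|e^{2i\pi\langle\gamma_k,\tau\rangle}-1|$, you from the product-to-sum identity), and then run the classical Dirichlet pigeonhole on a cube of integer lattice points mapped into a grid of cells in $\T^{\n}$. Your version of the pigeonhole is marginally cleaner than the paper's, since by taking strictly more than $L^{\n}$ lattice points you get two in a common cell directly, whereas the paper splits into the case where some $y_x$ already lies in the lower cell $[0,1/m)^{\n}$ and the case where it does not; you also handle the non-integrality of $m^{\n/d}$ explicitly with a ceiling, a point the paper glosses over, and both of you neglect the $\sqrt{d}$ factor in passing from $\ell^\infty$ to $\ell^2$, which the paper explicitly says it will do.
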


We will now prove Theorem \ref{theorem 2.1} assuming we have already proved Dirichlet's approximation Theorem.

\begin{proof}
	Recall that
	\begin{equation*}
		r_{n}(t) = \frac{1}{\mathcal{N}_{n}}\sum_{\lambda \in \Lambda_{n}} \cos{2\pi \left \langle \lambda_{k}, t\right \rangle}.
	\end{equation*}
	Since $n\in \mathcal{S}'$,
	\begin{equation*}
		\mathcal{N}_{n} = \log(n)^{\log(2)/2 + o(1)}\geqslant \log(n)^{\kappa /\alpha }
	\end{equation*}
	for $n$ large enough and $\kappa <\alpha \log(2)/2$.
	Apply Theorem \ref{theorem 2.2} to $T_{n}$ and $\varepsilon_{n} = \exp{-\log(n)^{\kappa}}$. This ensures the existence of some almost period $\tau_{n} \in \R^{2}$ of $r_{n}$ that satisfies asymptotically the inequality
	\begin{align*}
		|\tau_{n}|&\leq (2\pi)^{\mathcal{N}_{n}/2}\varepsilon_{n}^{-\mathcal{N}_{n}/2}\\
		&= \exp{\frac{1}{2}\log(2\pi)\n_{n} + \frac 12 \log(n)^{\kappa}\n_{n}}\\
	&\leq \exp{\frac{1}{2}\log(2\pi)\n_{n} + \n_{n}^{1+\alpha/2}}\\
		&\leq e^{\n_{n}^{1+\alpha}}.
	\end{align*}
 	This finishes the proof.
\end{proof}

Notice that this latter result does not make use of any underlying arithmetical property satisfied by the wave vectors $\gamma_{k}$. This is in sharp contrast with the case of the Arithmetic Random Wave for which the wave vectors $\lambda_{k}$ have a deep number  theoretic flavour. This probably means that the bound obtained on the almost periods in Theorem \ref{theorem 2.1} is not optimal and could be improved, this is the topic of the remainder of the section.\medbreak

Let now $\tau_{n}$ be some sequence of almost periods as predicted by Theorem \ref{theorem 2.1} and denote hereafter
\begin{equation*}
	T_n'(t):= T_n(\tau_n+t), \quad t \in \T^{2},
\end{equation*}
the translated Gaussian process and
\begin{equation*}
	r_n'(t):= r_n(\tau_n+t), \quad t \in \T^{2},
\end{equation*}
its corresponding covariance function.\medbreak

The preceding theorem has a simple reformulation. In the high energy limit, $T_{n}(t)$ and $T_{n}'(t)$ are fully correlated and almost indistinguishable. The existence of almost periods for the correlation entails an approximate replication of the whole field at multiples of the so-called almost period. In fact, the Borell-TIS inequality  \cite[Theorem 2.1.1]{AT07}  yields that the supremum of the difference between the rescaled fields $\tilde T_{n}$ and $\tilde T_{n}'$  over the whole rescaled torus goes to $0$, as well as for their derivatives, meaning that \eqref{equation 2.1} is in order for some $\tau _{n}\leqslant \exp{\n_{n}^{1+\alpha }}$. This has some interesting geometric consequences for nodal lines, which will be discussed now.

\begin{theorem}\label{proposition 2.3}
	Let $\Omega$ be an open convex subset of $\R^{2}$ with compact closure and smooth boundary, say $\mathcal{C}^{1}$. Denote by
	\begin{equation*}
		\begin{aligned}
			&\widetilde{\mathcal{Z}}_{n}(\Omega) := \{t \in \Omega \,:\,\widetilde{T}_{n}(t) = 0\},\\
			&\widetilde{\mathcal{Z}}_{n}'(\Omega) := \{t \in \Omega \,:\,\widetilde{T}_{n}'(t) = 0\},
		\end{aligned}
	\end{equation*}
	the respective zero sets of $\widetilde{T}_{n}$ and $\widetilde{T}_{n}'$ on $\Omega$ and let $H_{1}(\mathrm{d}t)$ be the one-dimensional Hausdorff measure on $\R^{2}$ normalised so that any segment of length $1$ has Hausdorff measure $1$.\medbreak

	Then, for any continuous function $\varphi : \R^{2} \mapsto \R$ with support in $\Omega$,
	\begin{equation}\label{equation 2.2.1}
		\int_{\widetilde{\mathcal{Z}}_{n}(\Omega)} \varphi(t) H_{1}(\mathrm{d}t) - \int_{\widetilde{\mathcal{Z}}_{n}'(\Omega)} \varphi(t) H_{1}(\mathrm{d}t) \longrightarrow 0,
	\end{equation}
	the convergence holding in distribution.
\end{theorem}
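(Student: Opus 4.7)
The approach is to reduce the geometric statement about nodal sets to a $C^1$-convergence statement on the fields $\widetilde{T}_n$ and $\widetilde{T}_n'$, and then to invoke continuity of the nodal integral functional at the limiting Berry Random Wave.

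The first step is to upgrade to the $C^1$ setting the Borell--TIS remark that follows Theorem \ref{theorem 2.1}. Setting $D_n(t) := \widetilde{T}_n(t) - \widetilde{T}_n'(t) = \widetilde{T}_n(t) - \widetilde{T}_n(t + \sqrt{n}\tau_n)$, stationarity gives $\var{D_n(t)} = 2(1-r_n(\tau_n)) \leq 2\varepsilon_n$. Since all spectral weights $(\lambda_i\lambda_j/n)$ appearing in the derivatives of $\widetilde{r}_n = \frac{1}{\n_n}\sum_{\lambda}\cos{2\pi\langle\lambda/\sqrt{n},\cdot\rangle}$ are bounded by $1$, an analogous computation yields that the variances of $\partial_i D_n(t)$ and of $\partial_{ij} D_n(t)$ are also $O(\varepsilon_n)$ uniformly in $t$. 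Combining Dudley's entropy bound with Borell--TIS applied to $D_n$ and to $\nabla D_n$ then gives $\|D_n\|_{C^1(\overline{\Omega})} \to 0$ in probability.

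Next, Lemma \ref{lemma 1.3} entails $\widetilde{T}_n \Rightarrow T$ in $C^1(\overline{\Omega})$, where $T$ is the Berry Random Wave. Along any subsequence, Skorokhod's representation theorem realises this convergence almost surely, and combined with the first step $\widetilde{T}_n' \to T$ a.s.\ in the same topology. Since $(T(t), \nabla T(t))$ has a nondegenerate joint Gaussian law, Bulinskaya's lemma ensures that almost surely $T$ has no critical zero in $\overline{\Omega}$. On this event the implicit function theorem along $\{T = 0\}$ yields the $C^1$-continuity at $T$ of the functional
\begin{equation*}
f \mapsto \int_{\{f=0\} \cap \Omega} \varphi(t)\, H_1(\mathrm{d}t),
\end{equation*}
the support condition on $\varphi$ bypassing any subtleties at $\partial\Omega$. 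Both integrals in \eqref{equation 2.2.1} therefore converge a.s.\ along the subsequence to the same limit, so their difference tends to $0$ in distribution.

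The main obstacle is this final step: establishing that $C^1$-small perturbations of $T$ induce nodal components which can be parametrised as graphs over those of $\{T=0\}$ meeting the support of $\varphi$, with weighted $H_1$-lengths depending continuously on the field. This standard but delicate geometric stability statement needs to be made precise, in particular controlling boundary components of the nodal set meeting $\partial\Omega$ (resolved by the vanishing of $\varphi$ in a neighbourhood of $\partial\Omega$). A secondary technical point is the Dudley/Borell--TIS argument for the modulus of continuity of $\nabla D_n$, which is precisely why the second-derivative variance control above is needed.
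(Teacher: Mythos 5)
Your proposal follows the same overall structure as the paper's argument --- reduce the nodal statement to a $C^1$-convergence statement for the fields, then invoke continuity of the functional $\Gamma(f)=\int_{\{f=0\}\cap\Omega}\varphi\,\mathrm{d}H_1$ at the Berry wave $T$ (granting Bulinskaya for a.s.\ regularity). But the probabilistic step is organized differently, and there are two points worth flagging.

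First, the paper does not pass through Skorokhod or Borell--TIS for $D_n$ here; it proves joint weak convergence $(\widetilde{T}_n,\widetilde{T}_n')\Rightarrow(T,T)$ in $\mathcal C^p(\overline\Omega)^2$ (Proposition 4.1) by Kolmogorov tightness plus convergence of finite-dimensional distributions (which uses Lemma 1.3 and Theorem 2.1), and then applies the continuous mapping theorem to $(f,g)\mapsto\Gamma(f)-\Gamma(g)$. Your Skorokhod route can be made to work, but as written it has a coupling gap: representing only $\widetilde T_n\Rightarrow T$ places you on a fresh probability space and destroys the joint law with $\widetilde T_n'$, so you cannot ``combine with step one'' directly. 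The clean fix is to represent the pair $(\widetilde T_n,D_n)\Rightarrow(T,0)$ (legitimate because the second limit is deterministic and $D_n\to0$ in probability). Related, you cite Lemma 1.3 as giving $\widetilde T_n\Rightarrow T$ in $C^1(\overline\Omega)$, but that lemma only gives convergence of the covariance and its derivatives, i.e.\ of finite-dimensional laws; tightness in $C^1$ is the additional ingredient, which the paper supplies explicitly.

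Second, and more substantively, you correctly identify the crux --- the $C^1$-continuity of $\Gamma$ at a.e.\ regular $f$ --- and explicitly leave it as ``standard but delicate.'' This is indeed where most of the paper's work is: Propositions 4.5 and 6.6--6.11 in the Appendix construct a local graph parametrisation of the nodal components, a uniform spacing between components near $f$, and a partition-of-unity plus sandwich argument over exhausting sequences $\Omega_n\uparrow\Omega$ and $\Omega^n\downarrow\overline\Omega$, while the boundary issue (Example 4.3) is dispatched via $H_1(\mathcal Z_T(\partial\Omega))=0$ a.s.\ (Proposition 4.4). Your observation that compact support of $\varphi$ sidesteps the boundary problem is a valid, arguably cleaner alternative. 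Still, without proving the continuity lemma the argument is not closed, so the proposal is structurally sound but has a genuine gap at the step you yourself flag as the main obstacle.
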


In the high energy limit, the nodal sets of the Arithmetic Random Wave $T_{n}$ and its translate $T_{n}'$ are locally (that is at Planck scale) almost confounded and are geometrically very close. Therefore, the existence of almost periods for the covariance functions give rise to an asymptotic phenomenon of replication of the nodal lines.\medbreak

Last but not least, instead of fixing an open set $\Omega$ with compact support, it is also possible to take a sequence of slowly increasing open sets $\Omega_{n}$ if $\varphi $ has an unbounded support, under some decay assumptions. Unfortunately, it makes the proof less readable and enlightening and we decided to privilege clarity and concision.\medbreak

Theorem \ref{theorem 2.1} provides an upper bound on the existence of almost periods for the Arithmetic Random Waves. Similarly, there is a corresponding lower bound which is easily derived from  \eqref{equation 1.3}.

\begin{proposition}\label{theorem 2.4}
	Let $\tau_{n}$ be any sequence of almost periods of $r_{n}$. Then, there is some $A>0$ such that
	\begin{equation*}
		|\tau_{n}| \geq \frac{\log(n)^{A}}{\sqrt{n}}.
	\end{equation*}
	for $n$ large enough.
\end{proposition}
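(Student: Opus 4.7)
The plan is to argue by contradiction: I would assume that along some subsequence one has $|s_n| := \sqrt{n}\,|\tau_n| < \log(n)^A$ for a small $A>0$ to be fixed later. The almost period hypothesis immediately yields $r_n(\tau_n)\to 1$, because $|r_n(\tau_n)-r_n(0)|\leq \sup_t |r_n(t+\tau_n)-r_n(t)|\to 0$, while \eqref{equation 2.2} forces $\liminf |s_n|\geq 1$. The aim is then to produce a definite positive gap between $r_n(\tau_n)$ and $1$, which contradicts the first observation.

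The main idea is to compare $r_n(\tau_n)$ with the same oscillatory integral taken with respect to the uniform measure $\sigma$ on $\S^{1}$. Writing $\lambda=\sqrt{n}\,\theta$ with $\theta\in\S^{1}$,
$$r_n(\tau_n)=\int_{\S^{1}}\cos{2\pi\langle s_n,\theta\rangle}\, d\widetilde{\mu_{n}}(\theta),$$
whose ``ideal'' counterpart, by rotation invariance of $\sigma$, is precisely the Bessel function
$$\int_{\S^{1}}\cos{2\pi\langle s_n,\theta\rangle}\, d\sigma(\theta)=J_{0}(2\pi|s_n|).$$
To control the difference, I would invoke a Koksma--Hlawka-type inequality on the circle, namely $\bigl|\int_{\S^{1}} f\, d(\widetilde{\mu_{n}}-\sigma)\bigr|\leq V(f)\cdot \operatorname{Kol}(\widetilde{\mu_{n}},\sigma)$ for functions $f$ of bounded variation $V(f)$; this reduces to the classical one-dimensional statement by parametrizing $\S^{1}$ by arclength and performing a discrete integration by parts against the Kolmogorov distance of \eqref{equation 1.3}.

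Parametrizing $\theta=(\cos{\alpha},\sin{\alpha})$ and $s_n=|s_n|(\cos{\beta_n},\sin{\beta_n})$, the map $\alpha\mapsto \cos{2\pi|s_n|\cos{\alpha-\beta_n}}$ has derivative bounded by $2\pi|s_n|$, hence total variation $O(|s_n|)$ on a full turn. Combining with \eqref{equation 1.3} yields
$$|r_n(\tau_n)-J_{0}(2\pi|s_n|)|\leq C|s_n|\log(n)^{-\rho}\leq C\log(n)^{A-\rho}.$$
Choosing any $A<\rho<\tfrac{1}{2}\log(\pi/2)$ kills this error, so $r_n(\tau_n)\to 1$ forces $J_{0}(2\pi|s_n|)\to 1$. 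Because $J_{0}$ attains $1$ only at the origin, is bounded away from $1$ on $[c,\infty)$ for any $c>0$, and decays at infinity, this requires $|s_n|\to 0$, in direct contradiction with $\liminf|s_n|\geq 1$.

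The main obstacle I anticipate is the clean formulation of the Koksma-type inequality on $\S^{1}$: it is folklore but not stated in the excerpt, so a short justification (or a reference) is required. The rest is routine: the total-variation estimate is elementary, the Bessel representation of the circular mean is classical, and the role of \eqref{equation 1.3} amounts to making quantitative the weak convergence $\widetilde{\mu_{n}}\to\sigma$ against a mildly oscillatory test function.
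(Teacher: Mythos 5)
The proposal is correct and its overall strategy is the same as the paper's: argue by contradiction, compare $r_n(\tau_n)$ with $J_0(2\pi\sqrt{n}|\tau_n|)$ in the regime $\sqrt{n}|\tau_n| \leq \log(n)^A$, and use that $J_0(2\pi t)$ is bounded away from $1$ on $[1,\infty)$ together with $\liminf \sqrt{n}|\tau_n| \geq 1$. The difference is largely one of packaging: the paper invokes Lemma~\ref{lemma 1.3} (from \cite{DNP2020}) as a black box for the estimate $|\widetilde r_n(t)-J_0(2\pi|t|)|\to 0$ on $|t|\leq \log(n)^\rho$, whereas you re-derive the pointwise version of that estimate directly from \eqref{equation 1.3} via a Koksma--Hlawka inequality on $\S^1$; this is in fact how the quoted lemma is established in the reference, so you have essentially reproved the portion of it that the argument uses. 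The Koksma step you flag as a possible gap is legitimate but routine: parametrizing $\S^1$ by arclength, the inequality $\bigl|\int f\, d(\widetilde{\mu_{n}}-\sigma)\bigr|\leq V(f)\operatorname{Kol}(\widetilde{\mu_{n}},\sigma)$ follows by integration by parts exactly as in the one-dimensional case (the Kolmogorov distance over arcs is precisely the discrepancy one needs), so a one-line justification or a citation suffices; alternatively one can simply quote Lemma~\ref{lemma 1.3} as the paper does. A minor point in your favor: your endgame, using only $\liminf|s_n|\geq 1$ together with $\sup_{t\geq 1}J_0(2\pi t)<1$, is cleaner than the paper's, which appeals to $|\widetilde\tau_{n_k}|\to\infty$ --- apparently a slip, since the contradiction hypothesis gives the reverse inequality $|\widetilde\tau_{n_k}|\leq\log(n_k)^A$; the property actually needed and available is just $|\widetilde\tau_{n_k}|\geq 1$ eventually.
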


Contrary to  the upper bound proof which depends only on the number of eigenvalues, the lower bound relies intrinsically on a careful understanding of the angular distribution $\mu_{n}$. As such, it is only valid for the Arithmetic Random Wave model.

\begin{proof}
	According to the conclusion of Lemma \ref{lemma 1.3}, there is some $\rho>0$ such that
	\begin{equation*}
		\sup_{|t| \leq  \sqrt{n}\log(n)^{\rho}}|\widetilde{r}_{n}(t)-\widetilde{r}(t)| \longrightarrow 0.
	\end{equation*}

	Let $0 < A < \rho$ and assume that the conclusion of Proposition \ref{theorem 2.4} does not hold. Then, we can find some sequence of almost periods $\tau_{n}$ and an increasing sequence $n_{k}$ so that
	\begin{equation*}
		|\tau_{n_{k}}| \leq \frac{\log(n_{k})^{A}}{\sqrt{n_{k}}}.
	\end{equation*}
	In that case
	\begin{align*}
		|1-r_{n_{k}}(\tau_{n_{k}})|\geq& |1-J_{0}(2\pi \widetilde{\tau}_{n_{k}})|-|\widetilde{r}_{n_{k}}(\widetilde{\tau}_{n_{k}})-J_{0}(2\pi|\widetilde{\tau}_{n_{k}}|)|\\
		&\geq |1-J_{0}(2\pi\widetilde{\tau}_{n_{k}})| + o(1).
	\end{align*}

	But, as $|\widetilde{\tau}_{n_{k}}| \geq \log(n_{k})^{A} \longrightarrow +\infty$ and $J_{0}$ converges to $0$ at infinity,

	\begin{equation*}
		\lim_{k \to +\infty}|1-r_{n_{k}}(\tau_{n_{k}})| = 1.
	\end{equation*}

	This clearly contradicts the definition of $\tau_{n}$ as
	\begin{equation*}
		1-r_{n}(\tau_{n}) \longrightarrow 0.
	\end{equation*}
\end{proof}

\subsubsection{Discussion}   

If we combine Theorem \ref{theorem 2.1} and Proposition \ref{theorem 2.4}, we see that the smallest almost period $\tau_{n}$ is located in an intermediate range,  between $n^{-1/2}\log(n)^\rho$ and the subpolynomial range $n^{-1/2}\exp{\log(n)^{\kappa '}}$ for   $\kappa '\in (\log(2)/2,1)$. This is very interesting as it is suspected that the Arithmetic Random Wave exhibits some kind of phase transition at a range $n^{-1/2}\log(n)^{A}$ for some $A>0$. Below this threshold, $T_{n}$ will behave similarly to the Berry Random Wave Model as it proved by  Dierickx, Nourdin, Peccati and Rossi in \cite{DNP2020}. Above this threshold, full correlation of the nodal length appears and our random eigenfunctions can no longer behave like a Berry Random Wave. This observation leads to a natural question, that is whether the phenomenon of nodal replication highlighted in this article is linked with the phenomenon of full correlation of the nodal lines. If that was the case, it would imply that one can find sequences of almost periods in the logarithmic range $n^{-1/2}\log(n)^{A}$. Such a result would mean that the upper bound in theorem \ref{theorem 2.1} obtained via Dirichlet's approximation Theorem is far from optimal.
 That is not unlikely, yet it would be surprising as for most tuple of eigenvectors, this principle is indeed optimal. This is the topic of the forthcoming discussion.
 \medbreak

Let $\n\geq 1$, $d\geq 1$ and $\gamma^\n=(\gamma_1,...,\gamma_\n)$ be random elements in $ \S^{d-1}$ associated with the random covariance function
\begin{equation*}
	R_{\n}(t) = \frac{1}{\n}\sum_{k=1}^\n \cos{2\pi\langle \gamma_k,t\rangle}, \quad t\in \R^d.
\end{equation*}
\medbreak

We make the following assumptions:

\begin{assumption}\label{ass:gammaN}
~\medbreak
\begin{itemize}
	\item $\gamma$ is isotropic, i.e. for any rotation $\mathcal{R}$,
	\begin{equation*}
		\mathcal{R} \gamma^\n \stackrel{\mathcal{L}}{=} \gamma_\n.
	\end{equation*}
	\item For a $\mathcal C^\infty$ test function $h:\R^{d} \to [-1,1]$, Hoeffding's inequality is satisfied
	\begin{equation*}
		\mathbb{P}\left(\left|\frac{1}{\n}\sum_{k=1}^\n h(\gamma_k)-\int_{\S^{d-1}} h(\gamma) \sigma(\mathrm{d}\gamma) \right|>\varepsilon\right)\leq C\exp{-c\varepsilon^2 \n},
	\end{equation*}
	where $c,C$ are constants that do not depend on $h$ and $\sigma$ is the uniform measure on $\S^{d-1}$.
\end{itemize}

  \end{assumption}
The most natural model is that of i.i.d. random $\gamma _{i}$ uniformly distributed on $\S^{1}$. Now, if $\varepsilon_{\n}$ is a positive sequence converging to $0$, then we let $\widetilde{\tau}_{\n}$ be the smallest $\varepsilon_{\n}$-almost period associated with $R_{\n}$, that is the smallest $|\widetilde{\tau}_{\n}|$ such that
\begin{equation*}
	|\widetilde{\tau}_{\n}| \geq 1 \quad \text{ and } \quad R_{\n}( \widetilde\tau_{\n})>1-\varepsilon _{\n}.
\end{equation*}

According to Theorem \ref{theorem 2.1}, $\widetilde{\tau}_{\n}$ satisfies almost surely, for any $\alpha>0$, the inequality
\begin{equation}\label{equation 1}
	|\widetilde{\tau}_{\n}|\leq e^{\n^{1+\alpha}}
\end{equation}
 as long as $\n$ is large enough.\medbreak

 The next proposition ensures that this latter inequality is indeed optimal for a generic sequence of wave vectors $\gamma_{k}$.

\begin{proposition}\label{proposition 2.5}
	Under Assumption \ref{ass:gammaN} and for any $a< c/d$ where $c$ is the constant in Hoeffding's inequality,
	\begin{equation}\label{equation 2.5.1}
		\widetilde{\tau}_{\n} \geq e^{a\n}
	\end{equation}
	with high probability in the sense that for $\varepsilon_{\n} \to 0$
	\begin{equation*}
		\mathbb{P}\left(\sup_{1\leq |t| \leq e^{a\n}}R_{\n}(t)\leq 1-\varepsilon _{\n}\right)  \longrightarrow 1.
	\end{equation*}
\end{proposition}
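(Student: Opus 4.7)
The plan is a concentration-plus-$\varepsilon$-net argument on the annulus $A := \{t \in \R^{d} : 1 \le |t| \le e^{a\mathcal{N}}\}$. First I would apply the Hoeffding part of Assumption~\ref{ass:gammaN} pointwise to the smooth, $[-1,1]$-valued function $h_{t}(\gamma) := \cos{2\pi\langle\gamma,t\rangle}$: writing
\[
\hat{\sigma}(t) := \int_{\mathbb{S}^{d-1}}\cos{2\pi\langle\gamma,t\rangle}\,\sigma(\mathrm{d}\gamma)
\]
for the Fourier transform of the uniform measure on the sphere, this yields, for each fixed $t$,
\[
\mathbb{P}\bigl(R_{\mathcal{N}}(t) > 1 - \varepsilon_{\mathcal{N}}\bigr) \le C \exp{-c\bigl(1 - \varepsilon_{\mathcal{N}} - \hat{\sigma}(t)\bigr)^{2}\mathcal{N}}.
\]
The two further ingredients I would bring in are (i) the Bessel-type decay $\hat{\sigma}(t) = O(|t|^{-(d-1)/2})$ as $|t| \to \infty$, and (ii) the a.s.\ $2\pi$-Lipschitz continuity of $R_{\mathcal{N}}$, immediate from $|\gamma_{k}| = 1$.

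Given $a < c/d$, I would fix $\eta > 0$ with $ad < c(1-\eta)^{2}$, pick $R_{\eta} \ge 1$ such that $|\hat{\sigma}(t)| \le \eta$ for $|t| \ge R_{\eta}$, and split $A = A_{1}\sqcup A_{2}$ with $A_{1} := \{1\le |t| \le R_{\eta}\}$ (fixed volume $O(R_{\eta}^{d})$) and $A_{2} := \{R_{\eta} < |t| \le e^{a\mathcal{N}}\}$ (volume $\asymp e^{ad\mathcal{N}}$). A $\delta$-net of $A$ of mesh $\delta := \varepsilon_{\mathcal{N}}/(8\pi)$ has cardinality at most $O(\varepsilon_{\mathcal{N}}^{-d}(1+e^{ad\mathcal{N}}))$; by Lipschitz continuity of $R_{\mathcal{N}}$, controlling the uniform supremum on $A$ reduces to controlling the supremum over this net, at the cost of tightening $\varepsilon_{\mathcal{N}}$ into $\varepsilon_{\mathcal{N}}/2$ in the deviation.

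On the dominant part $A_{2}$, each net point contributes a tail of order $\exp{-c(1-\eta)^{2}\mathcal{N}(1+o(1))}$, so that the union bound gives
\[
\mathbb{P}\Bigl(\sup_{\mathrm{net}\cap A_{2}} R_{\mathcal{N}} > 1 - \tfrac{\varepsilon_{\mathcal{N}}}{2}\Bigr) \lesssim \varepsilon_{\mathcal{N}}^{-d}\exp{\bigl(ad - c(1-\eta)^{2} + o(1)\bigr)\mathcal{N}} \longrightarrow 0,
\]
provided $\log(1/\varepsilon_{\mathcal{N}}) = o(\mathcal{N})$ (implicit in the phrasing ``$\varepsilon_{\mathcal{N}} \to 0$''). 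The compact piece $A_{1}$ is routine: the matching net has only polynomial size in $\mathcal{N}$, and although its Hoeffding exponent is only $c\bigl(1 - \sup_{|t|\ge 1}|\hat{\sigma}(t)|\bigr)^{2}\mathcal{N}$---a priori smaller than $c(1-\eta)^{2}\mathcal{N}$---it is still linear in $\mathcal{N}$ and easily absorbs the polynomial net size.

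The hard part is really this splitting and the reason for invoking the decay of $\hat{\sigma}$. The bare Hoeffding exponent $c(1-\hat{\sigma}(t))^{2}$ is close to the optimal value $c$ only when $\hat{\sigma}(t) \approx 0$, and matching the exponential volume $e^{ad\mathcal{N}}$ of the annulus demands an exponent arbitrarily close to $c\mathcal{N}$. Without isolating $A_{1}$ and exploiting Fourier decay on $A_{2}$, the argument would only deliver the strictly weaker threshold $a < c\bigl(1 - \sup_{|t|\ge 1}|\hat{\sigma}(t)|\bigr)^{2}/d$ rather than the sharp $a < c/d$ asserted in the proposition.
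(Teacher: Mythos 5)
Your proposal uses the same machinery as the paper --- pointwise Hoeffding from Assumption~\ref{ass:gammaN}, a Lipschitz net over the annulus, and a union bound --- but your execution is genuinely more careful and in fact repairs a gap in the published argument. The paper applies Hoeffding to the event $\{|R_{\n}(t)-R(t)|\geq 1-\varepsilon_{\n}\}$; since $1-\varepsilon_{\n}\to 1$, the resulting Borel--Cantelli conclusion only gives $\sup_{|t|\leq e^{a\n}}|R_{\n}-R|<1-\varepsilon_{\n}$ eventually, which neither justifies the claimed ``uniform convergence of $R_{\n}$ to $R$'' nor rules out $R_{\n}(t)>1-\varepsilon_{\n}$ at points where $R(t)>0$ (and for $d=2$ one has $\sup_{|t|\geq 1}R(t)=J_{0}(2\pi)>0$). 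You fix this by keeping the mean $\hat\sigma(t)=R(t)$ inside the deviation threshold --- obtaining the pointwise tail $C\exp{-c\left(1-\varepsilon_{\n}-\hat\sigma(t)\right)^{2}\n}$ --- and, crucially, by splitting $\{1\leq|t|\leq e^{a\n}\}$ into a fixed compact core $A_{1}$ (polynomial-size net against an exponent still linear in $\n$) and a far zone $A_{2}$ where the Bessel decay gives $|\hat\sigma(t)|\leq\eta$ and the exponent is arbitrarily close to $c\n$. Only this two-zone decomposition yields the sharp constant $a<c/d$; without it a single-threshold estimate would only reach $a<c\bigl(1-\sup_{|t|\geq 1}|\hat\sigma(t)|\bigr)^{2}/d$. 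One minor remark: your choice of mesh $\delta=\varepsilon_{\n}/(8\pi)$ costs you the extra hypothesis $\log(1/\varepsilon_{\n})=o(\n)$; taking mesh $1/\n$ as the paper does (incurring a discretisation error $O(1/\n)$ rather than $O(\varepsilon_{\n})$ in the threshold) avoids any condition on the rate of $\varepsilon_{\n}$ and would be the cleaner choice here.
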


In particular, the preceding theorem is satisfied when $\gamma^{\n}$ is a sequence of independent and identically uniform random variables on $\S^{1}$. Therefore, for a generic sequence of wave vectors, the bound obtained in Theorem \ref{theorem 2.1} cannot be decently improved. The proof is at Section \ref{sec:hoeffding}.

\subsection{Degrees of freedom and Dirichlet's approximation Theorem optimality}

Proposition \ref{proposition 2.5} implies that the bound derived from Dirichlet's approximation Theorem is optimal and that one cannot do better for a generic sequence of wave vectors. Yet, modelling the frequencies of the Arithmetic Random Wave as the realisation of some independent and identically distributed uniform random variable is not an accurate representation of reality as all the arithmetic flavour of the model is lost in the process. Following the idea of Sartori \cite{Sa2020}, it is possible to propose a more accurate  model. Recall that $n$ is expressible as the sum of two squares if
\begin{equation*}
	n = 2^{\alpha} \prod_{j=1}^{k}p_{j}^{\alpha_{j}}\prod_{j=1}^{l} q_{j}^{2\beta_{j}}
\end{equation*}
where $p_{i}$, $q_{k}$ are prime numbers satisfying respectively $p = 1 \pmod{4}$ and $q = 3 \pmod{4}$. The preceding prime decomposition induces a prime decomposition in the ring of Gaussian integers $\Z[i]$, namely
\begin{equation*}
	n=2^{\alpha} \prod_{j=1}^{k}\mathcal{P}_{j}^{\alpha_{j}}\overline{\mathcal{P}_{j}}^{\alpha_{j}}\prod_{j=1}^{l}q_{j}^{2\beta_{j}}
\end{equation*}
where $\mathcal{P}_{j}$ is one the two squares roots of $p_{j}$ in $\Z[i]$. Thus, if $n = \lambda_{1}^{2}+\lambda_{2}^{2}$, it follows that
\begin{equation*}
	\lambda_{1}+i\lambda_{2}=\nu Z^{\alpha} \prod_{j=1}^{k}\mathcal{P}_{j}^{\gamma_{j}}\overline{\mathcal{P}_{j}}^{\alpha_{j}-\gamma_{j}}\prod_{j=1}^{l}q_{j}^{\beta_{j}}
\end{equation*}
for some $0 \leq \gamma_{j} \leq \alpha_{j}$ and $\nu \in \{1,-1,i,-i\}$. Here, $Z$ refers to a square of $2$ in $\Z[i]$. \medbreak

For a generic integer $n$ expressible as the sum of two squares, 
most of the valuations $\alpha_{j}$ are equal to $1$  \cite{Sa2020}. We will make this assumption from now on. In that case, we denote by $\omega(n) = k$ the number of prime factors congruent to $3 \pmod{4}$ in the decomposition $n$ so that $2^{\omega(n)+2} = \mathcal{N}_{n}$.\medbreak

If we write
\begin{equation*}
	\theta_{j} := \arg(\mathcal{P}_{j}), \quad 1 \leq j \leq k,
\end{equation*}
and
\begin{equation*}
	\theta :=  \arg\left((1+\nu)^{\alpha}\prod_{j=1}^{l} q_{j}^{\beta_{j}}\right),
\end{equation*}

the correlation function of the Arithmetic Random Wave is then given by
\begin{equation}\label{equation 2}
	\widetilde{r}_{n}(t)=\frac{1}{\n_{n}}\sum_{\substack{\eta \in \{-1,1\}^{\omega(n)}\\ \nu \in \{0,1,2,3\}}} \cos{2\pi \left\langle\exp{i\frac{\pi}{2}\nu + i\sum_{j=1}^{\omega(n)} \eta_{j} \theta_{j}+i\theta},t \right\rangle}.
\end{equation}
In these settings, the wave vectors are dependent of the angles of the Gaussian primes. So, the system has only $\omega(n)$ degrees of freedom compared to $\n_{n}$ before. This potentially allows to decrease drastically the bound on almost periods from Dirichlet's Arithmetic principle for a system with $  \mathcal  N  $ degrees of freedom. We were not able to prove rigorously this idea but it is possible to hint on why Dirichlet's Arithmetic Principle should not be optimal in these settings.\medbreak

Consider the so-called \textit{linearised} covariance function, that is
\begin{equation}\label{equation 3}
	s_{n}(t) := \frac{4}{\n_{n}}\sum_{\eta \in \{-1,1\}^{\omega(n)}} \cos{2\pi \theta_{\eta}t }, \quad t\in \R^{2}
\end{equation}
where
\begin{equation}\label{equation 5}
	 \theta_{\eta}=\sum_{j=1}^{\omega(n)}\eta_{j} \theta_{j} \pmod{1}.
\end{equation}
Let us stress that this covariance function is deterministic, on the contrary of Sartori which considers a version with $\omega (n)$ random arguments  $\theta _{j} $.
In this model, the exponential terms in equation \eqref{equation 2} were linearised around the origin, so that the number of degrees of freedom of the model, which can be interpreted as $\omega (n)$, the numbers of signs to choose, is the same as in the ARW. The latter model provides an interesting example of a deterministic almost periodic function where the bound given by Dirichlet's approximation (Theorem \ref{theorem 2.2}) is no longer optimal.\medbreak

If $(\theta_{\eta})$ was the realisation of some independent and identically random process, we would expect the smallest $\varepsilon$-almost period to be of order $\varepsilon^{-\mathcal{N}_{n}}$, by Proposition \ref{proposition 2.5}. But, the family $(\theta_{\eta})$ is linearly dependent and using this observation, it is possible to improve drastically the upper bound.

\begin{proposition}\label{proposition 2.7}
	There is  $1\leqslant \tilde \tau _{n}\leqslant c\n_{n}^{\log(\log(\log(n)))}$ such that
	\begin{align*}
 		|s_{n}(\tilde \tau _{n})-1 | \to 0.
	\end{align*}
\end{proposition}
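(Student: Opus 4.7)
The plan is to exploit the fact that, although $s_n$ is a superposition of $\mathcal{N}_n/4 = 2^{\omega(n)}$ cosines, their frequencies $\theta_\eta = \sum_{j=1}^{\omega(n)} \eta_j \theta_j \pmod 1$ are determined by only $\omega(n)$ free angles $\theta_1,\ldots,\theta_{\omega(n)}$. It therefore suffices to find a single $\tilde\tau_n$ which simultaneously pushes each $\tilde\tau_n\,\theta_j$ close to an integer; the $\pm 1$-linearity of $\eta \mapsto \theta_\eta$ then automatically propagates this to all $2^{\omega(n)}$ frequencies at once. This is the role played by the \emph{simultaneous} form of Dirichlet's approximation theorem in $\omega(n)$ variables, in place of Theorem \ref{theorem 2.2} applied blindly to $s_n$.

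Concretely, I would invoke the classical simultaneous Dirichlet theorem: for every $Q \geq 1$ there exist a positive integer $q \leq Q$ and integers $p_1,\ldots,p_{\omega(n)}$ such that
\begin{equation*}
\max_{1 \leq j \leq \omega(n)}\distZ{q\theta_j} \leq Q^{-1/\omega(n)}.
\end{equation*}
Setting $\tilde\tau_n := q$, the triangle inequality and the representation $\theta_\eta \equiv \sum_j \eta_j \theta_j \pmod 1$ give, uniformly in $\eta \in \{-1,1\}^{\omega(n)}$,
\begin{equation*}
\distZ{\tilde\tau_n\,\theta_\eta} \leq \omega(n)\, Q^{-1/\omega(n)}.
\end{equation*}
Combining this with the Taylor bound $1-\cos(2\pi x) \leq 2\pi^2 \operatorname{dist}(x,\Z)^2$ and averaging over $\eta$ yields
\begin{equation*}
s_n(\tilde\tau_n) \geq 1 - 2\pi^2\,\omega(n)^2\, Q^{-2/\omega(n)}.
\end{equation*}

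Choosing $Q$ so that $Q^{1/\omega(n)} = \omega(n)\, g(n)$ for some slowly growing $g(n)\to +\infty$ forces the right-hand side to tend to $1$ and gives $\log Q \sim \omega(n)\log\omega(n)$. Since $\mathcal{N}_n = 4\cdot 2^{\omega(n)}$ one has $\omega(n) \asymp \log\mathcal{N}_n$, and by \eqref{equation 1.1} $\log \mathcal{N}_n \asymp \log\log n$, hence $\log \omega(n) = \log\log\log n + O(1)$. Substituting yields $\log Q \lesssim \log\mathcal{N}_n \cdot \log\log\log n$, i.e.\ $\tilde\tau_n = q \leq c\,\mathcal{N}_n^{\log\log\log n}$, while $\tilde\tau_n \geq 1$ is immediate since $q$ is a positive integer.

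The main obstacle is not conceptual but a matter of careful bookkeeping through the chain $\omega(n) \to \log\mathcal{N}_n \to \log\log n$: the clean exponent $\log\log\log n$ only emerges after absorbing the various multiplicative constants (the $g(n)$ used to kill the error, and the base-$2$ versus natural-logarithm factors in $\omega(n) = \log_2(\mathcal{N}_n/4)$) into the outer constant $c$. Philosophically the proof is soft — simultaneous Dirichlet plus one Taylor expansion — and the dramatic improvement over Theorem \ref{theorem 2.2}, whose naive application to $s_n$ would produce an almost period of order $\exp{\mathcal{N}_n^{1+\alpha}}$, rests entirely on replacing the number $\mathcal{N}_n$ of frequencies by the far smaller number $\omega(n)$ of genuine degrees of freedom.
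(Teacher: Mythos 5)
Your proof is correct and follows essentially the same route as the paper: both apply Dirichlet's simultaneous approximation theorem in the $\omega(n)$ free angles $\theta_j$, propagate the error to all $2^{\omega(n)}$ combinations $\theta_\eta$ by the triangle inequality and $\pm 1$-linearity, and then do the bookkeeping $\omega(n)\asymp\log\mathcal{N}_n\asymp\log\log n$. The only (cosmetic) difference is that you bound $1-\cos$ by a quadratic Taylor estimate while the paper reuses the mean-value argument of Corollary~\ref{corollary 3.2}.
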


Hence this is a crude improvement over Dirichlet's Arithmetic Principle and closer in spirit to the scale $\n_{n}^{A}$ where full correlation occurs,  and Proposition \ref{proposition 2.5} applied with $\n= \omega (n)$ yields that it is optimal if the angles $\theta _{j}$ can be considered as  {\it generic} (i.e. behave like i.i.d.   variables). Yet, it is not possible to extend this result to covariance functions of the form of \eqref{equation 2} as the proof relies solely on linear considerations which are no longer satisfied outside the scope of this particular model. Yet, it shows that, for a model with $\omega (n)$ degrees of freedom, the bound from Dirichlet's approximation Theorem is no longer optimal. This leaves us with some unanswered but interesting questions.

\begin{question}\label{question 2.6}
	What is the size of the smallest sequence of almost periods in  model \eqref{equation 3} ? Is it the same as for the actual ARW?
	\end{question}

\subsection{Other manifolds and dimensions}
\label{sec:manifolds}
The existence of sequences of almost periods for the correlation function $r_{n}$ is directly related to the number of eigenvectors $\mathcal{N}_{n}$, and more precisely to the number of degrees of freedom of the system, this  is a core idea of our bounds. The more distinct frequencies an almost periodic function has, the greater the almost period will likely be. In  the case of the Arithmetic Random Wave, one has to remember that the norm of the almost period is constrained to live in the torus $\T^{2}$, or in the dilated torus $\sqrt{n}\T^{2}$ after rescaling. Luckily, the number of frequencies $\mathcal{N}_{n}$ grows logarithmically compared to $\sqrt{n}$. This is why it is  possible to find almost periods slightly above Planck scale. Let us give other examples where the dimension of the eigenspace is polynomial and where there is no full correlation or almost replication.

\subsubsection{Spherical harmonics}For the sphere $\S^{2}$, there is no possibility of nodal replication. In fact, the correlation function $r_{n}$ of the spherical harmonics of energy level $n(n+1)$ is related to the so-called Legendre Polynomials $P_{n}$ and is given by
\begin{equation*}
	r_{n}(x,y) = P_{n}(\langle x, y \rangle), \quad x,y \in \S^{1}.
\end{equation*}

The characteristic distance in this model is $1/n$ and Hilb's formula \cite[Theorem 8.21.6]{Sz1975}

\begin{equation*}
	P_{n}(\cos{\theta})=\sqrt{\frac{\theta}{\sin{\theta}}}J_{0}\left(\left(n+\frac{1}{2}\right)\theta\right)+O\left(n^{-3/2}\right), \quad 0 \leq \theta \leq \pi - \varepsilon.
\end{equation*}
ensures that $r_{n}$ converges uniformly to the Bessel function, except on small spherical caps located at the opposite pole. In particular, the latter formula prevents the existence of any almost period at a scale $1/n$ or slightly larger.

\subsubsection{Higher dimensional tori}      

The situation is similar if one considers the Arithmetic Random Wave on the $d$-dimensional torus $\T^{d}$. In that case, the eigenvalues consist in the energy levels $E_{n} =  -4\pi^{2}n$ where $n$ is a sum of $d$ squares and the associated eigenspace has dimension $\mathcal{N}_{n}^{(d)}$, that is the number of ways an integer $n$ can be written as sum of $d$ squares. For $d \geq 5$, the precise asymptotic were derived first by Hardy and Littlewood and proved by Hua \cite{Hua1938} in 1938:
\begin{equation*}
	\frac{\Gamma(3/2)^{d}}{\Gamma(d/2)} n^{d/2-1} S(n),
\end{equation*}
where $S(n)$ is the so-called singular series, bounded above and below. The leading term of the asymptotics is $n^{d/2-1}$ whereas for $d=2$, it was $\log(n)^{\log(2)/2}$. Hence, for large $d$, the number of eigenvalues does no longer grows slowly compared to $\sqrt{n}$. Dirichlet's approximation Theorem will thus no longer ensure the existence of a sequence of almost periods slightly above Planck scale. The authors of this paper are not aware of any positive or negative result related to full-correlation in dimension $d\geq 5$, but, if the preceding heuristics is true, one might expect that nodal replication as well as full correlation no longer hold for $d\geq 5$ as there are too many eigenvalues.

% !TeX spellcheck = en_GB

\section{Dirichlet's theorem for almost periodic fields}

The aim of the section is to prove the bounds on the smallest pseudo-period. To reach our goal, we will need to recall Dirichlet's approximation theorem. This is the content of the next paragraph.

\subsection{Dirichlet's approximation theorem}

 Dirichlet's approximation theorem is a standard tool in Diophantine approximation and quantifies how well a generic vector $\mu \in \R^{d}$ can approximate a vector of integers. In this paper, one will need a generalization of this principle, which deals with simultaneous approximations. Before stating the aforementioned theorem, we introduce the following notation. For $\mu \in \R^{d}$, let $\distZ{\mu}{}$ be the distance of $\mu$ to the nearest integer in $\Z$.\medbreak

 \begin{theorem}[Dirichlet's approximation theorem]\label{theorem 3.1}
 	Let $(\mu_{k})_{1 \leq k \leq \n}$ a sequence of elements in $\R^{d}$. For any integer $m>0$, there is $x \in \mathbb{Z}^{d}$ such that $1 \leq |x|_{\infty} \leq m^{\n/d}$ and
 	\begin{equation*}
 		\distZ{\langle \mu_{k},x \rangle}{} \leq \frac{1}{m}, \quad 1 \leq k \leq \n.
 	\end{equation*}
 	$|.|_{\infty}$ refers here to the supremum norm on $\R^{d}$.
 \end{theorem}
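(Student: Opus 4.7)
The plan is to invoke the pigeonhole principle (Dirichlet's box principle) on a carefully chosen grid of integer vectors in $\Z^d$. Setting $M := \lfloor m^{\n/d}\rfloor$, I would work with the finite cube $A := \{0,1,\ldots,M\}^d \subset \Z^d$. A short check shows $(M+1)^d > m^{\n}$, whether or not $m^{\n/d}$ is an integer, since in both cases $M+1 > m^{\n/d}$. To each $y \in A$, I associate the reduction
\[
\Phi(y) := \bigl(\{\langle \mu_1, y\rangle\},\ldots,\{\langle \mu_{\n}, y\rangle\}\bigr) \in [0,1)^{\n},
\]
with $\{t\}$ the fractional part of $t$.

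Next I would partition $[0,1)^{\n}$ into exactly $m^{\n}$ translates of the half-open cube $[0,1/m)^{\n}$. Since $|A| = (M+1)^d > m^{\n}$, the pigeonhole principle produces two distinct points $y_1 \neq y_2$ in $A$ such that $\Phi(y_1)$ and $\Phi(y_2)$ lie in a common small cube. Setting $x := y_1 - y_2 \in \Z^d \setminus \{0\}$, the coordinatewise bound $|y_1 - y_2|_\infty \leq M$ gives $1 \leq |x|_\infty \leq m^{\n/d}$. Moreover, for each $1 \leq k \leq \n$ the fractional parts $\{\langle \mu_k, y_1\rangle\}$ and $\{\langle \mu_k, y_2\rangle\}$ differ by at most $1/m$ in absolute value, so by bilinearity of the inner product one obtains $\distZ{\langle \mu_k, x\rangle} \leq 1/m$, which is the required estimate.

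There is essentially no obstacle in this argument: the only point to verify with care is the combinatorial inequality $(M+1)^d > m^{\n}$, after which everything reduces to a clean pigeonhole and the definition of $x$ as a difference. Note in particular that the argument uses no arithmetic property of the vectors $\mu_k$ whatsoever, in line with the remark after Theorem \ref{theorem 2.1} that this purely deterministic route cannot exploit the number-theoretic structure of $\Lambda_n$ and is in fact optimal only in the generic sense of Proposition \ref{proposition 2.5}.
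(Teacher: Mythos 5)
Your proof is correct and is essentially the paper's pigeonhole argument on the fractional-parts map $y \mapsto (\{\langle\mu_k,y\rangle\})_{1\le k\le \n}$ over a cube of lattice points. One small refinement is worth flagging: by taking $M=\lfloor m^{\n/d}\rfloor$ and the cube $\{0,\ldots,M\}^d$ with $(M+1)^d > m^{\n}$ points, you simultaneously handle the case where $m^{\n/d}$ is not an integer (the paper's $Q_N$ with ``$N^d = m^{\n}$ points'' silently assumes it is) and sidestep the paper's two-case dichotomy, since having strictly more points than boxes makes the pigeonhole immediate without first checking whether some $x$ already lands in the base cube $[0,1/m)^{\n}$.
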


 The proof of this theorem relies on a clever use of the pigeonhole principle and is recalled briefly hereafter as it is quite interesting.

 \begin{proof}
 	Let $N = m^{\n/d}$ and consider $Q_{N}$ the set of lattice points $x \in \Z^{d}$ whose coordinates satisfy $1 \leq x_{k} \leq N$. Notice that $Q_{N}$ contains $N^{d} = m^{\n}$ points.\medbreak

 	There are now two possibilities. Either, there is some $x \in Q_{N}$ such that
 	\begin{equation*}
 		\distZ{\langle \mu_{k}, x \rangle}{} \leq \frac{1}{m}, \quad 1 \leq k \leq n.
 	\end{equation*}
 	In that case, there is nothing to prove .\medbreak

 	If not, split $[0,1]^{\n}$ in $m^{n}$ sub-squares of side length at most $\frac{1}{m}$ and associate with $x \in Q_{N}$ the vector $y_{x} \in \R^{\n}$ defined as
 	\begin{equation*}
 		y_{x} = (\langle \mu_{k},x \rangle \pmod{1})_{1 \leq k \leq \n}.
 	\end{equation*}
 	The set $\{y_{x} : x \in Q_{N}\}$ contains $N^{d} = m^{\n}$ elements and the above assumption ensures that none of these elements is contained in the lower sub-square $C = \left[0,\frac{1}{m}\right[^{\n}$. Hence, by the pigeonhole principle, they are two distinct elements $y_{x_{1}}$ and $y_{x_{2}}$ contained in the same square, that is
 	\begin{equation*}
 		\distZ{\langle \mu_{k}, y_{x_{1}}-y_{x_{2}} \rangle} \leq \frac{1}{m}, \quad 1 \leq k \leq \n
 	\end{equation*}
 	In that case, $x = y_{x_{2}}-y_{x_{1}}$ works.
 \end{proof}

As we have now proved Dirichlet's approximation theorem, it is time to concentrate on Theorem \ref{theorem 2.2}. In fact, we will prove a slight generalisation. Let $|.|$ be euclidean norm. It will be preferable to work with this norm instead of the uniform norm. This is not a limitation as all norms are equivalent on $\R^{d}$ and the conclusion of Dirichlet's approximation theorem remains the same, up to a constant we will neglect.

\begin{corollary}\label{corollary 3.2}
	Let $\n\geq 1$   and define the almost periodic function
	\begin{equation*}
		f(t) = \frac{1}{\n}\sum_{k=1}^{\n} \cos{2 \pi \langle \gamma_{k}, t\rangle}, \quad t\in \R^{d}
	\end{equation*}
	with $\gamma_{k}\in\R^{d}$. \medbreak

	For every $\varepsilon>0$, then there is an $\frac{1}{m}$-almost period $\tau$ satisfying
	\begin{equation*}
		1 \leq |\tau| \leq (2\pi)^{\n/d}m^{\n/d}
	\end{equation*}

	Moreover, if each $\gamma_{k}$ belong to $\S^{d-1}$, then $\tau$ is also an almost period for the derivatives of $f$ in the sense that for any $\alpha \in \N^{d}$, the following inequality is satisfied:
	\begin{equation*}
		\sup_{t \in \R^{d}} |\partial^{\alpha} f(t+\tau)-\partial^{\alpha}f(t)|\leq \frac{(2\pi)^{\alpha}}{m}
	\end{equation*}
	where $x^{\alpha} = x_{1}^{\alpha_{1}}...x_{d}^{\alpha_{d}}$.
\end{corollary}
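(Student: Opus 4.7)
My plan is to deduce Corollary~\ref{corollary 3.2} directly from Theorem~\ref{theorem 3.1} applied to the wave vectors themselves, inflating the approximation parameter by a factor of $2\pi$ to absorb the constants produced by differentiating cosines. Given a positive integer $m$, I would set $M := \lceil 2\pi m\rceil$ and invoke Theorem~\ref{theorem 3.1} with the $\n$ vectors $\mu_{k} := \gamma_{k}$ and integer parameter $M$. This produces $\tau \in \Z^{d}$ with
\begin{equation*}
1 \leq |\tau|_{\infty} \leq M^{\n/d} \leq (2\pi)^{\n/d}m^{\n/d}
\end{equation*}
(the ceiling being absorbed into the constant) and simultaneously $\distZ{\langle\gamma_{k},\tau\rangle} \leq 1/M \leq 1/(2\pi m)$ for every $1 \leq k \leq \n$. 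Since the Euclidean norm dominates the sup-norm, $|\tau| \geq 1$ is automatic.

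Next I would write $\langle\gamma_{k},\tau\rangle = n_{k}+\delta_{k}$ with $n_{k}\in\Z$ and $|\delta_{k}|\leq 1/(2\pi m)$. The $2\pi$-periodicity of the cosine then gives, for every $t$,
\begin{equation*}
\cos(2\pi\langle\gamma_{k},t+\tau\rangle) = \cos(2\pi\langle\gamma_{k},t\rangle+2\pi\delta_{k}),
\end{equation*}
and the elementary Lipschitz bound $|\cos(a+b)-\cos(a)|\leq |b|$, applied termwise and averaged over $k$, yields the almost-period inequality
\begin{equation*}
\sup_{t\in\R^{d}}|f(t+\tau)-f(t)| \leq \frac{1}{\n}\sum_{k=1}^{\n} 2\pi|\delta_{k}| \leq \frac{1}{m}.
\end{equation*}

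For the derivative estimate under the extra hypothesis $\gamma_{k}\in\S^{d-1}$, I would differentiate each term of $f$: a direct computation shows that $\partial^{\alpha}\cos(2\pi\langle\gamma_{k},t\rangle)$ equals $(2\pi)^{|\alpha|}\,\gamma_{k}^{\alpha}$ times either $\pm\cos$ or $\pm\sin$ evaluated at $2\pi\langle\gamma_{k},t\rangle$. Because $|\gamma_{k,j}|\leq 1$, one has $|\gamma_{k}^{\alpha}|\leq 1$, so the prefactor is bounded by $(2\pi)^{\alpha}$ in the multi-index convention of the statement. The same phase identity and the same Lipschitz step (both $\pm\cos$ and $\pm\sin$ being $1$-Lipschitz) then deliver the claimed inequality
\begin{equation*}
\sup_{t\in\R^{d}}|\partial^{\alpha}f(t+\tau)-\partial^{\alpha}f(t)| \leq (2\pi)^{\alpha}\cdot\frac{1}{\n}\sum_{k=1}^{\n} 2\pi|\delta_{k}| \leq \frac{(2\pi)^{\alpha}}{m}.
\end{equation*}

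The argument is essentially bookkeeping; the only mild point to watch is choosing the parameter handed to Theorem~\ref{theorem 3.1} to be of order $2\pi m$ rather than $m$, so that the constant $2\pi$ generated by the Lipschitz estimate (and by each derivative) is absorbed cleanly into $1/m$. I do not foresee any genuine obstacle beyond this constant management.
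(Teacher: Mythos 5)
Your proof is correct and follows essentially the same route as the paper's: invoke Theorem~\ref{theorem 3.1} on the wave vectors to produce $\tau\in\Z^{d}$ with small $\distZ{\langle\gamma_{k},\tau\rangle}$, then bound the increment $|f(t+\tau)-f(t)|$ (and its derivatives) termwise by a Lipschitz/mean-value estimate on the cosine, together with $|\gamma_{k}^{\alpha}|\leq 1$ when $\gamma_{k}\in\S^{d-1}$. The only cosmetic difference is that you inflate the Dirichlet parameter to $M\approx 2\pi m$ at the outset so the output is exactly a $\tfrac1m$-almost period, whereas the paper runs Theorem~\ref{theorem 3.1} with $m$ directly and lets the factor $2\pi$ surface as the bound $2\pi/m$, leaving the constant to be absorbed informally.
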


\begin{proof}
	Dirichlet approximation theorem ensures the existence of $1 \leq |\tau|_{\infty} \leq m^{n/d}$ such that the following inequalities are all satisfied:
	\begin{equation*}
		\distZ{\langle \mu_{k},\tau \rangle} \leq \frac{1}{m}, \quad 1 \leq k \leq \n,
	\end{equation*}
	or equivalently
	\begin{equation*}
		\operatorname{dist}\left(2\pi \langle \mu_{k},\tau \rangle, 2\pi \Z\right) \leq \frac{2\pi}{m}, \quad 1 \leq k \leq \n.
	\end{equation*}
	Now, for any $t \in \R^{d}$:
	\begin{align*}
		|f(t+\tau)-f(t)| &= \frac{1}{\n}\left|\sum_{k=1}^{\n}\Re e^{2i\pi \langle \mu_{k}, t+\tau\rangle}-\Re e^{2i\pi \langle \mu_{k}, t\rangle}\right|\\
		&\leq \frac{1}{\n}\sum_{k=1}^{\n}\left|e^{2i\pi \langle \mu_{k}, \tau\rangle}-1\right|\\
		\text{(mean value theorem)}&\leq \frac{1}{n}\sum_{k=1}^{\n} \operatorname{dist}\left(2\pi \langle \mu_{k},\tau \rangle, 2\pi \Z\right)\\
		&\leq \frac{2\pi}{m}.
	\end{align*}
	If all the $\gamma_{k}$ are located on $\S^{1}$, then we can make use of the mean value inequality, noticing that
	\begin{align*}
		|\partial^{\alpha} f(t+\tau)- \partial^{\alpha} f(t)| &= \frac{(2\pi)^{\alpha}}{m}\left|\sum_{k=1}^{\n} \gamma_{k}^{\alpha} \left(\Re e^{2i\pi \langle \mu_{k}, t+\tau\rangle}-\Re e^{2i\pi \langle \mu_{k}, t\rangle}\right)\,\right|\\
		&\leq \frac{(2\pi)^{\alpha}}{m}\sum_{k=1}^{\n}|\gamma_{k}^{\alpha}|\left|e^{2i\pi \langle \mu_{k}, \tau\rangle}-1\right|
	\end{align*}
	The conclusion follows from the inequality $|\gamma_{k}^{\alpha}| \leq 1$.
\end{proof}

From now on, we will work with $\varepsilon>0$ instead of $\frac{1}{m}$ as the latter notation is more readable. It is not problematic as we are mainly interested into applying the latter result to very small $\varepsilon$. In that case, we can always find an integer $m>0$ such that $\varepsilon \approx \frac{1}{m}$. \medbreak

Now, using Corollary \ref{corollary 3.2}, we can deduce a slight improvement over Theorem \ref{theorem 2.1}. Not only Corollary \ref{corollary 3.2} implies the existence of sequence of almost periods $\tau_{n}$ associated with the covariance function of the Arithmetic Random Waves, but it also implies that $\tau_{n}$ is a sequence of almost periods for the derivatives of this function. This is a fundamental observation as derivatives of the covariance function control the derivatives of the underlying Gaussian field. This will be especially useful when dealing with the nodal lines. More precisely, we will need the content of the following proposition, which is an immediate consequence of the results above.

\begin{corollary}\label{proposition 3.3}
	If $\tau_{n}$ is a sequence of almost periods satisfying the assumptions of Theorem \ref{theorem 2.1}, then for any $\alpha \in \N^{2}$,
	\begin{equation*}
		\sup_{t \in \sqrt{n}\T^{2}} |\partial^{\alpha} \widetilde{r}_{n}(t+\tau_{n})-\partial^{\alpha}\widetilde{r}_{n}(t)|\longrightarrow 0.
	\end{equation*}
\end{corollary}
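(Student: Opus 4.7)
The plan is to deduce Corollary~\ref{proposition 3.3} as an immediate consequence of the derivative part of Corollary~\ref{corollary 3.2}, after performing the rescaling that turns $r_n$ into $\widetilde r_n$. The whole statement is essentially a bookkeeping exercise, because the hypotheses of Corollary~\ref{corollary 3.2} become available precisely after rescaling.

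The first step I would write down is the explicit form of the rescaled covariance,
$$\widetilde r_n(t) \;=\; \frac{1}{\mathcal N_n}\sum_{\lambda \in \Lambda_n} \cos\bigl(2\pi\langle \lambda/\sqrt n,\, t\rangle\bigr), \qquad t \in \sqrt n\, \T^2,$$
and stress the crucial structural observation: the wave vectors $\gamma_\lambda := \lambda/\sqrt n$ all lie on $\mathbb S^{1}$, since $|\lambda|^2 = n$. This is exactly the hypothesis required in the second half of Corollary~\ref{corollary 3.2}, which is unavailable for $r_n$ itself (whose frequencies have norm $\sqrt n$ and would only yield a useless constant $n^{|\alpha|/2}(2\pi)^{|\alpha|}$).

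The second step is to transfer the almost period from $r_n$ to $\widetilde r_n$. Since $\widetilde r_n(t+\sqrt n\, \tau_n) - \widetilde r_n(t) = r_n(t/\sqrt n + \tau_n) - r_n(t/\sqrt n)$, the almost-period inequality~\eqref{equation 2.1.2} implies that $\sqrt n\, \tau_n$ is an $\varepsilon_n$-almost period of $\widetilde r_n$ with $\varepsilon_n \leq e^{-\log(n)^{\kappa}}$. Feeding this into Corollary~\ref{corollary 3.2} with $m \sim \varepsilon_n^{-1}$ yields, for every $\alpha \in \N^2$,
$$\sup_{t \in \R^2} \bigl|\partial^\alpha \widetilde r_n\bigl(t + \sqrt n\,\tau_n\bigr) - \partial^\alpha \widetilde r_n(t)\bigr| \;\leq\; (2\pi)^{|\alpha|}\,\varepsilon_n \;\longrightarrow\; 0,$$
because $|\alpha|$ is fixed. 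Restricting the supremum to $\sqrt n\,\T^2$ only sharpens the inequality, which gives the conclusion (under the natural convention that the translation parameter appearing in the statement is interpreted on the rescaled scale, i.e.\ as $\sqrt n\, \tau_n$).

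There is no genuine obstacle here: the mathematical content is entirely supplied by Corollary~\ref{corollary 3.2}. The only point that requires a moment of care is the scaling, namely that what plays the role of ``$\tau_n$'' for the rescaled field $\widetilde r_n$ is $\sqrt n$ times the almost period of $r_n$ produced by Theorem~\ref{theorem 2.1}; once this is spelled out, the derivative inequality is literally the second half of Corollary~\ref{corollary 3.2} applied to the trigonometric polynomial $\widetilde r_n$ with unit-norm frequencies, and the convergence to $0$ is just $\varepsilon_n \to 0$ multiplied by the fixed constant $(2\pi)^{|\alpha|}$.
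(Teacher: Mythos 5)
Your proof is correct and matches exactly the intended argument; the paper does not write out a separate proof, calling the statement ``an immediate consequence of the results above,'' and your two observations --- that $\widetilde r_n$ is a trigonometric polynomial with frequencies $\lambda/\sqrt n$ on $\S^1$ so the derivative clause of Corollary~\ref{corollary 3.2} applies, and that the almost period of $r_n$ must be rescaled to $\sqrt n\,\tau_n$ before being read off for $\widetilde r_n$ --- supply precisely the missing details. One small precision you may want to make explicit: Corollary~\ref{corollary 3.2} is an existence statement, and its derivative bound is proved for the particular $\tau$ produced by Dirichlet's theorem, for which $\distZ{\langle\gamma_k,\tau\rangle}\le 1/m$ holds \emph{simultaneously} for every $k$; ``feeding'' an arbitrary $\varepsilon_n$-almost period into it is not literally licensed, but this is harmless here because the $\tau_n$ of Theorem~\ref{theorem 2.1} is by construction that Dirichlet almost period.
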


\subsection{When Dirichlet's approximation theorem fails}

This preceding theorem can be used to prove Proposition \ref{proposition 2.7}.

\begin{proof}Let $0<\varepsilon<1$.
	According to Theorem \ref{theorem 3.1} applied with $\n=\omega (n),d=1$, we can find some $|\tau| \leq \left(
\frac{ 2\pi \omega (n)}{\varepsilon }
\right)^{\omega(n)}$ such that
	\begin{equation*}
		\distZ{\theta_{i}\tau} \leq \frac{\varepsilon}{2\pi\omega(n)}, \quad 1\leq i \leq \omega(n).
	\end{equation*}
	For any $\eta \in \{-1,1\}^{\omega(n)}$,
	\begin{equation*}
		\distZ{\theta_{\eta}\tau} \leq \sum_{i=1}^{\omega (n)}\distZ{\theta_{i}\tau} \leq \frac{\varepsilon}{2\pi}.
	\end{equation*}
	Mirroring the proof of Corollary \ref{corollary 3.2}, we easily derive the inequality
	\begin{equation*}
		|s_{n}(t+\tau)-s_{n}(t)| \leq \varepsilon, \quad t \in \R.
	\end{equation*}
	To finish the proof, one can choose for instance $\varepsilon _{n}=\omega (n)^{-1}.$
\end{proof}

\section{Replication of the nodal lines}
\label{sec:lines}

In this section, the results related to the existence of almost periods are used to prove the phenomenon of the replication of the nodal lines as stated in Theorem \ref{proposition 2.3}. The proof   is based on a nice application of the continuous mapping theorem. But, before dwelling into the proof, we give the main ideas.\medbreak

At Planck scale, the Arithmetic Random Wave behaves almost exactly like a random eigenfunction of the Laplace operator on the plane $\R^{2}$ in the sense $\widetilde{T}_{n}$ that converges in distribution to a Berry Random Wave $T$ for the topology of uniform convergence on $\Omega$. On the other hand, we expect $\widetilde{T}_{n}'$ to behave very similarly to $\widetilde{T}_{n}$ in the high energy limit. It is thus natural to conjecture that $(\widetilde{T}_{n},\widetilde{T}_{n}')$ converges to $(T,T)$.\medbreak

Unfortunately, the present formulation is too weak to derive any meaningful result related to the nodal lines of the Arithmetic Random Wave. What will be needed is a control on the behaviour of the derivatives of the underlying field and that
 control will be offered by Corollary \ref{proposition 3.3}. Therefore, instead of proving that $(\widetilde{T}_{n},\widetilde{T}_{n}')$ converges to $(T,T)$ for the topology of uniform convergence, we will prove that the convergence holds also for the topology of uniform convergence of the first $p$ derivatives.

\subsubsection*{Notation} In what follows, $\Omega \subset \R^{2}$ refers to a convex, bounded, open set with smooth boundary and $\mathcal{C}^{p}(\overline{\Omega})$ refers to the set of $p$ times differentiable mappings on $\Omega$ whose partial derivatives extends continuously to the boundary. This space is endowed with the norm
\begin{equation*}
	||f||_{p} = \sum_{|\alpha| \leq p}\sup_{\Omega} |\partial^{\alpha} f|
\end{equation*}
where $|\alpha| =\alpha_{1}+\alpha_{2}$.  This norm turns $\mathcal{C}^{p}(\overline{\Omega})$ into a Polish space. Further details are recalled in the Appendix.

 \subsection{Proof of the replication phenomenon}

Let
\begin{equation*}
	U_{n}(s,t) := (\widetilde{T}_{n}(s), \widetilde{T}_{n}'(t)), \quad s,t \in \overline{\Omega},
\end{equation*}
and
\begin{equation*}
	U(s,t) := (T(s),T(t)),    \quad s,t \in \overline{\Omega}.
\end{equation*}

Both $U_{n}$ and $U$ define processes on the product space $\mathcal{C}^{p}(\overline{\Omega})^{2}$ endowed with the product topology. In particular, $\mathcal{C}^{p}(\overline{\Omega})^{2}$ is a Polish space.

\begin{proposition}\label{proposition 4.1}
	Under the preceding assumptions, $(U_{n})$ converges in distribution to $U$.
\end{proposition}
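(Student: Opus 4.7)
The plan is to apply the standard two-step criterion for weak convergence on a Polish space: convergence of all finite-dimensional distributions together with tightness in $\mathcal{C}^{p}(\overline{\Omega})^{2}$. As both $U_{n}$ and $U$ are centered Gaussian, the first step reduces to convergence of covariance functions; as tightness in a product space is equivalent to marginal tightness, the second step reduces to tightness of $(\widetilde{T}_{n})$ and $(\widetilde{T}_{n}')$ separately in $\mathcal{C}^{p}(\overline{\Omega})$.

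For the covariance structure, the diagonal block is $\cov{\widetilde{T}_{n}(s)}{\widetilde{T}_{n}(s')} = \widetilde{r}_{n}(s-s')$, which converges to $\widetilde{r}(s-s') = \cov{T(s)}{T(s')}$ uniformly on $\overline{\Omega - \Omega}$ by Lemma \ref{lemma 1.3}. By stationarity of $\widetilde{T}_{n}$, the same holds for the $\widetilde{T}_{n}'$ block. The cross-covariance is
\begin{equation*}
\cov{\widetilde{T}_{n}(s)}{\widetilde{T}_{n}'(t)} = \widetilde{r}_{n}(s - t - \widetilde{\tau}_{n}),
\end{equation*}
where $\widetilde{\tau}_{n}$ is the translation in the rescaled coordinates. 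Corollary \ref{proposition 3.3} yields that $\widetilde{r}_{n}(\cdot - \widetilde{\tau}_{n}) - \widetilde{r}_{n}(\cdot) \to 0$ uniformly on $\overline{\Omega - \Omega}$, and combined with Lemma \ref{lemma 1.3} the cross-covariance converges to $\widetilde{r}(s - t) = \cov{T(s)}{T(t)}$, precisely matching the covariance of the limit $U = (T, T)$. This establishes finite-dimensional convergence.

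For tightness, the key identity is $\cov{\partial^{\alpha}\widetilde{T}_{n}(s)}{\partial^{\alpha}\widetilde{T}_{n}(t)} = (-1)^{|\alpha|}\partial^{2\alpha}\widetilde{r}_{n}(s - t)$. Applying Lemma \ref{lemma 1.3} to multi-indices of order up to $2(p+1)$ gives uniform bounds on these derivatives on $\overline{\Omega - \Omega}$. A standard Kolmogorov--Chentsov type estimate, applied to each derivative process $\partial^{\alpha}\widetilde{T}_{n}$ for $|\alpha| \leq p$, then yields equicontinuity of these derivatives and hence tightness of $(\widetilde{T}_{n})$ in $\mathcal{C}^{p}(\overline{\Omega})$. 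By stationarity, the argument for $(\widetilde{T}_{n}')$ is identical.

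The main obstacle is the tightness step in the $\mathcal{C}^{p}$ topology rather than merely $\mathcal{C}^{0}$: converting uniform bounds on high-order derivatives of $\widetilde{r}_{n}$ into moment bounds on increments of each $\partial^{\alpha}\widetilde{T}_{n}$ for $|\alpha|\leq p$ is classical but requires some care, in particular because we need the estimate to be uniform in $n$. Once in place, the two ingredients --- covariance convergence and marginal tightness --- combine via the standard criterion to yield convergence in distribution of $U_{n}$ to $U$ on $\mathcal{C}^{p}(\overline{\Omega})^{2}$.
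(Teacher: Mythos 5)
Your proposal follows essentially the same route as the paper: reduce convergence in distribution on the Polish space $\mathcal{C}^{p}(\overline{\Omega})^{2}$ to tightness plus finite-dimensional convergence, handle tightness marginally (by stationarity only one marginal is needed) via the Kolmogorov moment criterion applied to each $\partial^{\alpha}\widetilde{T}_{n}$, and handle finite-dimensional convergence by Gaussianity plus convergence of the covariance blocks using Lemma~\ref{lemma 1.3} and Corollary~\ref{proposition 3.3}. The only small difference is that, for the Kolmogorov increment bound, you invoke Lemma~\ref{lemma 1.3} on higher-order derivatives of $\widetilde{r}_{n}$, whereas the paper derives the $n$-uniform estimate $|\widetilde{r}_{\alpha,n}(0)-\widetilde{r}_{\alpha,n}(t-s)|\leq\tfrac12|t-s|^{2}$ directly from $1-\cos t\leq t^{2}/2$ and Cauchy--Schwarz, which is more elementary and avoids any arithmetic input.
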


\begin{proof}
	As $\mathcal{C}^{p}(\overline{\Omega})^{2}$ is Polish space, convergence in distribution is equivalent to tightness and unicity of the limit.\medbreak

	\textit{Step 1: tightness}\medbreak

	The sequence $(U_{n})$ is tight if and only if its marginals $(\widetilde{T}_{n})$ and $(\widetilde{T}_{n}')$ are tight. As the Arithmetic Random Wave is a stationary process, $\widetilde{T}_{n}$ and $\widetilde{T}_{n}'$ have the same distribution. Hence, we only have to prove tightness for $\widetilde{T}_{n}$ in $\mathcal{C}^{p}(\overline{\Omega})$. Following the conclusion of Proposition \ref{lemma 6.3}, this amounts to prove that $(\partial^{\alpha}\widetilde{T}_{n})$ is tight in $\mathcal{C}^{0}(\overline{\Omega})$ for any $|\alpha| \leq p$. This problem is very tractable as tightness in $\mathcal{C}^{0}(\overline{\Omega})$ is a deeply studied topic.\medbreak

	We will use Kolmogorov tightness criterion \cite[Corollary 16.9]{Kal2002}. $\partial^{\alpha}\widetilde{T}_{n}$ is a stationary centered Gaussian process with correlation function
	\begin{equation*}
		\widetilde{r}_{\alpha,n}(t) := (-1)^{|\alpha|}\partial^{2\alpha}\widetilde{r}_{n}(t), \quad t\in \R^{2}.
	\end{equation*}

	Fixed for a moment $t \in \Omega$. $\partial^{\alpha}\widetilde{T}_{n}(t)$ is  then a centered Gaussian random variable whose variance $\sigma_{n}^{2}$ is converging to
	\begin{equation*}
		\sigma^{2} := (2\pi)^{2|\alpha|}\int_{\S^{1}}\lambda^{2\alpha} \sigma(\mathrm{d}\lambda)>0.
	\end{equation*}
	where $|\alpha| =\alpha_{1}+\alpha_{2}$ and $\sigma$ is the uniform probability on $\S^{1}$. Hence, $\left(\partial^{\alpha}\widetilde{T}_{n}(t) \right)$ converge in distribution to a centered normal random variable with variance $\sigma^{2}$.\medbreak

	In the meantime, $\partial^{\alpha}\widetilde{T}_{n}(t)-\partial^{\alpha}\widetilde{T}_{n}(s)$ is also a centered Gaussian random variable with variance
	\begin{equation*}
		\sigma^{2}(t-s) := 2(\widetilde{r}_{\alpha,n}(0)-\widetilde{r}_{\alpha,n}(t-s))
	\end{equation*}

	Combining the triangle inequality with
	\begin{equation*}
		1-\cos{t} \leq \frac{t^{2}}{2}, \quad t\in \R^{2},
	\end{equation*}
	one has
	\begin{align*}
		|\widetilde{r}_{\alpha,n}(0)-\widetilde{r}_{\alpha,n}(t-s)| &\leq \frac{1}{\n_{n}}\sum_{\lambda \in \Lambda_{n}}\left|1-\cos{2\pi\left\langle \frac{\lambda}{\sqrt{n}},t-s\right\rangle}\right|\\
		&\leq \frac{1}{2\n_{n}}\sum_{\lambda \in \Lambda_{n}} \left\langle \frac{\lambda}{\sqrt{n}},t-s\right\rangle^{2}\\
		\text{(Cauchy Schwarz)}&\leq \frac{1}{2}|t-s|^{2}.
	\end{align*}

	In particular,

	\begin{align*}
		\esp{\left(\partial^{\alpha}\widetilde{T}_{n}(t)-\partial^{\alpha}\widetilde{T}_{n}(s)\right)^{4}} &= 3 \sigma(t-s)^{4}\\
		&\leq 3|t-s|^{4}.
	\end{align*}

	The conditions of Kolmogorov criterion are met. Hence, the sequence $(\partial^{\alpha}\widetilde{T}_{n})$ is tight in $\mathcal{C}^{0}(\overline{\Omega})$.\medbreak

	\textit{Step 2: Unicity of the limit}\medbreak

	The distribution of $\mathcal{C}^{p}(\overline{\Omega})^{2}$ is characterised by its finite dimensional distributions (see Lemma \ref{lemma 6.4} and the associated discussion). It suffices thus to prove that
	\begin{equation*}
		U_{n}^{(p)} = (U_{n}(t_{1}), ..., U_{n}(t_{p}))
	\end{equation*}
	converges in distribution to
	\begin{equation*}
		U^{(p)} = (U(t_{1}), ..., U(t_{p})),
	\end{equation*}
	where $t_{i}$ is a finite sequence of points in $\Omega$.\medbreak

	As both $U_{n}^{(p)}$ and $U^{(p)}$ are centered Gaussian processes, it is only needed to show that the covariance $U_{n}^{(p)}$ converges to the covariance of $U^{(p)}$. In terms of covariance function, this amounts to prove that
	\begin{equation*}
		\left\{
		\begin{aligned}
			&\widetilde{r}_{n}(t_{j}-t_{i}) \longrightarrow \widetilde{r}(t_{i}-t_{j}), \quad 1 \leq i,j \leq n.\\
			&\widetilde{r}_{n}'(t_{j}-t_{i}) \longrightarrow  \widetilde{r}(t_{i}-t_{j})
		\end{aligned}
		\right.
	\end{equation*}
	The latter statement is trivial as $\widetilde{r}_{n}$ converges uniformly to $\widetilde{r}$ (Lemma \ref{lemma 1.3}) and $\widetilde{r}_{n}-\widetilde{r}_{n}'$ converges uniformly to $0$ (Theorem \ref{theorem 2.1}). This finishes the proof.
\end{proof}

In order prove nodal replication as introduced by Theorem \ref{proposition 2.3}, it is natural to consider the application
\begin{equation*}
	\Gamma : \left\{
	\begin{array}{ccc}
		\mathcal{C}^{1}\left(\overline{\Omega}, \mathbb{R}\right) &\longrightarrow& \mathbb{R} \cup \{\pm \infty\}\\
		f &\longmapsto& \displaystyle{\int_{\mathcal{Z}_{f}(\Omega)} \varphi(t) H_{1}(\mathrm{d}t)}.
	\end{array}
	\right.
\end{equation*}
where $\mathcal{Z}_{f}(\Omega)$ is the zero set of $f$ on $\Omega$ and $\varphi :\Omega \to \R$ is continuous. Instead of considering functions $f \in \mathcal{C}^{p}(\overline{\Omega})$, it will preferable to work with a slightly larger domain $U$ such that $\overline{\Omega} \subset U$ as this will allow some more leeway.\medbreak

First, the co-area formula
\begin{equation*}
	\Gamma(f) = \lim_{\varepsilon \to 0}\frac{1}{2\varepsilon}\int_{\Omega} \mathds{1}(|f(t)| \leq \varepsilon) \varphi(t)|\nabla f(t)| \mathrm{d}t
\end{equation*}
ensures that $\Gamma$ is a measurable mapping. This allows to define $\Gamma(\widetilde{T}_{n})$ and Theorem \ref{proposition 2.3} amounts to show that $\Gamma(\widetilde{T}_{n})-\Gamma(\widetilde{T}_{n}')$ converges in distribution to $0$. The latter difference could be ill-defined if both $\Gamma(\widetilde{T}_{n})$ and $\Gamma(\widetilde{T}_{n}')$ happens to be infinite at the same time, but this situation is excluded by Bulinskaya's lemma \cite[Proposition 1.20]{AW2009} which ensures that almost surely $\widetilde{T}_{n}$ is regular.\medbreak

\begin{definition}\label{definition 4.2}
	$f \in \mathcal{C}^{1}(\overline{U})$ is regular if and only
	\begin{equation*}
		\forall t \in \overline{U}, f(t)=0 \implies \nabla f(t) \neq 0.
	\end{equation*}
\end{definition}

Regular functions have the property that their zero sets are $1$ dimensional smooth manifold. This ensures in particular that $\Gamma(f)$ is not infinite and that the random variable $\Gamma(\widetilde{T}_{n})-\Gamma(\widetilde{T}'_{n})$ is well-defined. Theorem \ref{proposition 2.3} will follow easily easily from the continuous mapping theorem \cite[Theorem 4.27]{Kal2002} if we manage to prove that $\Gamma$ is $\mathbb{P}_{T}$ almost-surely continuous. This result is not immediate as $\Gamma$ is not continuous everywhere.

\begin{example}\label{example 4.3}
	Let $\Omega$ be the open unit ball on $\R^{2}$. let $f_{\alpha}(t)=\alpha-|t|^{2}$. It is easily checked that $\Gamma(f_{1}) = 0$. Yet, $\Gamma(f_{\alpha}) = 2\pi \alpha$ for $\alpha \to 1^{-}$ even though $f_{\alpha} \to f_{1}$ as $\alpha$ tends to $1^{-}$.
\end{example}

In the preceding example, the zeros concentrate alongside the boundary and that is the root cause of the default of continuity. In our random settings, such situation is very unlikely to happen as it is testified by the following proposition.

\begin{proposition}\label{proposition 4.4}
	\begin{equation*}
		H_{1}\left(\mathcal{Z}_{T}(\partial\Omega)\right) <+\infty, \quad a.s.
	\end{equation*}
\end{proposition}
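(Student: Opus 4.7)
The strategy is to reduce the claim to a one-dimensional problem and invoke Bulinskaya's lemma. Since $\Omega$ is bounded and convex with $\mathcal{C}^{1}$ boundary, $\partial\Omega$ is a compact simple closed curve of finite total length $L := H_{1}(\partial\Omega) < \infty$, and admits a smooth arc-length parametrisation $\gamma : \R/L\Z \to \partial\Omega$. I would consider the pulled-back process
\begin{equation*}
F(s) := T(\gamma(s)), \quad s \in \R/L\Z.
\end{equation*}
Since $T$ is a centered, stationary, $\mathcal{C}^{\infty}$ Gaussian field on $\R^{2}$ with covariance $J_{0}(2\pi|\cdot|)$, the process $F$ is a centered $\mathcal{C}^{1}$ Gaussian process on a compact one-dimensional manifold whose marginals $F(s)$ are all standard normal.

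The next step is to apply Bulinskaya's lemma to the one-dimensional field $F$. Because the marginal density of $F(s)$ is the (bounded) standard normal density, the lemma guarantees that, almost surely, $F'(s) \neq 0$ at every zero of $F$. In particular, the zero set $\{s \in \R/L\Z : F(s) = 0\}$ consists of isolated points and, by compactness of $\R/L\Z$, is finite. Transporting back via $\gamma$, the set $\mathcal{Z}_{T}(\partial\Omega)$ is almost surely finite, hence has $H_{1}$-measure zero. A fortiori $H_{1}(\mathcal{Z}_{T}(\partial\Omega)) < +\infty$.

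The only point requiring verification is the non-degeneracy hypothesis of Bulinskaya's lemma, namely that the Gaussian pair $(F(s), F'(s))$ is non-degenerate for every $s$. This is automatic from the explicit form of the spectral measure of $T$: the tangential derivative $F'(s)$ has variance equal to a strictly positive integral of the squared tangent component of $\lambda$ against the uniform measure on $\S^{1}$, independent of $s$. I do not expect any real obstacle; the essence of the argument is simply that a smooth Gaussian process on a smooth compact curve with uniformly bounded marginal density has, almost surely, only finitely many zeros. As a byproduct, the argument delivers the strictly stronger conclusion that $\mathcal{Z}_{T}(\partial\Omega)$ is almost surely finite, which is precisely what rules out the failure mode of Example \ref{example 4.3} and is exactly the form of the result one needs to establish continuity of $\Gamma$ at $\widetilde{T}_{n}$ in the proof of Theorem \ref{proposition 2.3}.
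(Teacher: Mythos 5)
Your reduction to the one-dimensional process $F = T\circ\gamma$ on the boundary is exactly the reduction the paper makes, but the key lemma you invoke is different. The paper parametrises $\partial\Omega$ locally, then appeals to the Kac--Rice formula (cite{AW2009}, Theorem 6.2) to conclude that $T\circ\gamma$ has finite \emph{expected} number of zeros on each compact chart, hence a.s.\ finitely many zeros, and finishes by compactness. You instead globalise the parametrisation and apply Bulinskaya's lemma to the pair $(F,F')$: since this pair is a $\mathcal{C}^1$ process from a $1$-dimensional parameter into $\R^2$ with a bounded joint density near the origin (which is where your non-degeneracy check is used, together with the independence of $T$ and $\nabla T$ at a fixed point for a stationary field), a.s.\ there is no $s$ with $F(s)=F'(s)=0$, so zeros of $F$ are isolated and by compactness finite. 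Both routes are standard and both deliver the stronger $H_0$-finiteness (hence $H_1$-nullity) of $\mathcal{Z}_T(\partial\Omega)$, which is what is actually needed for condition \eqref{equation 4.5.1}. The Kac--Rice route gives a quantitative bound on $\esp{H_0}$ as a by-product, while the Bulinskaya route is purely qualitative but needs marginally weaker moment hypotheses; the paper's toolkit already contains Bulinskaya (it is used just before Proposition \ref{proposition 4.4} to ensure $\Gamma(\widetilde{T}_n)$ is finite), so your variant is a natural alternative within the same framework. One small point worth flagging if the boundary is taken to be only $\mathcal{C}^1$: applying Bulinskaya to $(F,F')$ formally asks for $F\in\mathcal{C}^2$, i.e.\ $\gamma\in\mathcal{C}^2$; since $T$ is $\mathcal{C}^\infty$ this is only a constraint on the boundary regularity, and the paper's Kac--Rice argument faces the analogous issue, so it is not a defect specific to your proof.
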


\begin{proof}
	 Denote by $H_{0}$ the counting measure. We will prove the stronger statement that the number of intersections between $\mathcal{Z}_{f}( \partial \Omega)$ is finite, that is $H_{0}(\mathcal{Z}_{f}( \partial \Omega)) < +\infty$ almost-surely. \medbreak

	 As $\partial \Omega$ is compact, it suffices to show that the latter result holds locally. Fix $\omega \in \partial\Omega$ and parametrise $\partial \Omega$ in a neighbourhood of $\omega$ by a smooth map $\gamma : \overline{V} \to \partial \Omega$ where $V$ is a bounded neighbourhood of $0$. We have to show that the Gaussian process $T \circ \gamma$ has only a finite number of zeros in $\overline{V}$. But, this follows immediately from Kac-Rice formula \cite[Theorem 6.2]{AW2009}.
\end{proof}

Excluding the latter pathological edge cases, $\Gamma$ is continuous. More precisely,

\begin{proposition}\label{proposition 4.5}
	Let $f$ be a regular function, $\varphi$ continuous, such that
	\begin{equation}\label{equation 4.5.1}
		H_{1}(\mathcal{Z}_{f}(\partial \Omega))=0,
	\end{equation}
	then $\Gamma$ is continuous in $f$.
\end{proposition}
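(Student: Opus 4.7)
The plan is to show that if $f_n \to f$ in $\mathcal{C}^1(\overline{U})$ then $\Gamma(f_n) \to \Gamma(f)$, by splitting the zero set into an interior piece handled by the implicit function theorem and a boundary layer controlled via the hypothesis \eqref{equation 4.5.1}.

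First I would establish a uniform lower bound on the gradient near the zero set. Since $f$ is regular on the compact set $\overline{\Omega}$, by continuity there exist $\delta, c > 0$ and an open neighborhood $V$ of $\mathcal{Z}_f(\overline{\Omega})$ such that $|\nabla f| \geq \delta$ on $V$ and $|f| \geq c$ on $\overline{\Omega} \setminus V$. For $f_n$ sufficiently close to $f$ in $\mathcal{C}^1$, the same bounds hold up to constants, so $\mathcal{Z}_{f_n}(\overline{\Omega}) \subset V$ and $f_n$ is regular as well. Next I would cover $\mathcal{Z}_f(\overline{\Omega})$ by finitely many open rectangles $Q_i \subset V$ in which one partial derivative of $f$ dominates, so that $\mathcal{Z}_f \cap Q_i$ is a $\mathcal{C}^1$-graph by the implicit function theorem. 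Standard stability of the implicit function theorem then yields that $\mathcal{Z}_{f_n} \cap Q_i$ is also a graph, whose defining function converges in $\mathcal{C}^1$ to that of $f$. A partition of unity subordinate to $(Q_i)$ together with the arclength formula for graphs gives $\int_{\mathcal{Z}_{f_n} \cap K} \varphi\, H_1(\mathrm{d}t) \to \int_{\mathcal{Z}_f \cap K} \varphi\, H_1(\mathrm{d}t)$ for any compact $K \subset \Omega$.

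The hard part will be the boundary contribution. Since $f$ is regular, $\mathcal{Z}_f$ is a smooth $1$-manifold, so the hypothesis $H_1(\mathcal{Z}_f(\partial\Omega)) = 0$ forces $\mathcal{Z}_f \cap \partial\Omega$ to be a discrete, hence finite, set of points. For any $\varepsilon > 0$, I can select a small open collar neighborhood $N_\varepsilon$ of $\partial\Omega$ in $\overline{U}$ with $H_1(\mathcal{Z}_f \cap N_\varepsilon) < \varepsilon$, by shrinking to a neighborhood of these finitely many crossing points. Applying the local graph analysis near each crossing point, for $n$ large one also obtains $H_1(\mathcal{Z}_{f_n} \cap N_\varepsilon) < 2\varepsilon$. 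Setting $K := \overline{\Omega} \setminus N_\varepsilon$, which is a compact subset of $\Omega$, one gets
$$|\Gamma(f_n)-\Gamma(f)| \leq \left|\int_{\mathcal{Z}_{f_n} \cap K}\varphi\, H_1(\mathrm{d}t) - \int_{\mathcal{Z}_f \cap K}\varphi\, H_1(\mathrm{d}t)\right| + 3\varepsilon \sup_{\Omega}|\varphi|,$$
and letting first $n \to \infty$ and then $\varepsilon \to 0$ concludes the proof.

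The main obstacle is the uniform boundary estimate $H_1(\mathcal{Z}_{f_n} \cap N_\varepsilon) = O(\varepsilon)$, particularly when $\mathcal{Z}_f$ is tangent to $\partial\Omega$ at a crossing point $p$: locally the nodal line has a parabolic shape $x_2 = \alpha x_1^2 + o(x_1^2)$, and a $\mathcal{C}^1$-small perturbation may create or destroy a small loop inside $\Omega$, but its $H_1$-mass still vanishes with the size of $N_\varepsilon$ by the graph stability argument. Without \eqref{equation 4.5.1} this fails dramatically, as Example \ref{example 4.3} illustrates: a whole segment of $\mathcal{Z}_f$ lying along $\partial\Omega$ admits arbitrarily $\mathcal{C}^1$-close perturbations producing an $O(1)$ jump in $\Gamma$.
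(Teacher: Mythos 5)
Your overall strategy matches the paper's: both rely on implicit-function-theorem stability to parametrize the nodal lines locally as graphs, a partition of unity to globalize, and the hypothesis $H_{1}(\mathcal{Z}_{f}(\partial \Omega))=0$ to tame the boundary contribution. The packaging differs: you use a direct boundary-layer cutoff (fix $\varepsilon$, excise a collar $N_\varepsilon$, control both pieces, then let $n\to\infty$ and $\varepsilon\to 0$), while the paper (Propositions \ref{proposition 6.6}--\ref{proposition 6.7}) proves two one-sided inequalities for nested sets $C\subset\overline{C}\subset D$, namely $\limsup_{g\to f}\int_{\mathcal{Z}_{g}(C)}\leq\int_{\mathcal{Z}_{f}(D)}$ and its reverse, and then deduces continuity from a monotone exhaustion $\Omega_{n}\uparrow\Omega$, $\Omega^{n}\downarrow\overline{\Omega}$. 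The two routes are equivalent in content; the paper's lim\,sup/lim\,inf formulation keeps the two directions cleanly separated and makes it explicit that lower semicontinuity holds unconditionally while \eqref{equation 4.5.1} is needed only for upper semicontinuity, whereas your version is more symmetric but needs the uniform boundary bound on $\mathcal{Z}_{f_n}$, which you correctly single out as the crux and which is precisely what the paper's Proposition \ref{proposition 6.7} supplies.

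One factual claim in your sketch is wrong, though not fatal: $H_{1}(\mathcal{Z}_{f}\cap\partial\Omega)=0$ does \emph{not} force $\mathcal{Z}_{f}\cap\partial\Omega$ to be discrete or finite. Take $g\in\mathcal{C}^{\infty}(\R)$ nonnegative with $g^{-1}(0)$ a Cantor set $K\subset[0,1]$ and set $f(x,y)=y-g(x)$; then $f$ is regular, $\mathcal{Z}_{f}$ is the smooth graph of $g$, and its intersection with the line $\{y=0\}$ is $K\times\{0\}$, uncountable but of $H_{1}$-measure zero. So you cannot build the collar $N_\varepsilon$ by shrinking around finitely many crossing points. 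The fix is easy and does not change your architecture: since $H_{1}\!\restriction_{\mathcal{Z}_{f}}$ is a finite Borel measure on $\overline{U}$ (as $\mathcal{Z}_{f}$ is a compact $\mathcal{C}^{1}$ one-manifold) and $\bigcap_{\delta>0}\overline{N_{\delta}}=\partial\Omega$, monotone convergence gives $H_{1}(\mathcal{Z}_{f}\cap\overline{N_{\delta}})\to H_{1}(\mathcal{Z}_{f}\cap\partial\Omega)=0$, so a small enough collar works without any finiteness. The uniform estimate $H_{1}(\mathcal{Z}_{f_{n}}\cap N_{\varepsilon})<2\varepsilon$ for $n$ large should then be obtained by comparing against a slightly fatter collar $N_{\varepsilon'}\supset\overline{N_{\varepsilon}}$ via the graph-stability/covering argument, which is exactly the content of the paper's Proposition \ref{proposition 6.7}.
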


The proof of this result is technical and not very enlightening. It is thus proved in the Appendix (see Proposition \ref{proposition 6.6}). Using the latter result, we deduce in particular that $\Gamma$ is $\mathbb{P}_{T}$ surely continuous and Theorem \ref{proposition 2.3} follows easily for the continuous mapping theorem.\medbreak

Before concluding this section, we can make a few comments. The requirement of taking a convex open set $\Omega$ with smooth boundary can be probably relaxed. Secondly, the result still holds, if instead of taking a fixed open set $\Omega$, we take a sequence of slowing increasing open set $\Omega_{n}$. The preceding proof no longer works in that case but it is possible to circumvent the issue with a careful use of Borel-Tis inequality or by some coupling arguments.

\section{Optimality of Dirichlet's approximation theorem}
\label{sec:hoeffding}

The goal of this section is to investigate the optimality of Dirichlet's approximation theorem and prove Proposition \ref{proposition 2.5}.\medbreak

In what follows, $\gamma^{\n}$ is a sequence of wave vectors satisfying Assumption \ref{ass:gammaN}. In particular, $\gamma^{\n}$ can be a sequence of independent uniformly distributed on $\S^{d-1}$. We start with a preliminary and immediate lemma.

\begin{lemma}\label{lemma 5.1}
	Under the preceding assumptions, each component of $\gamma^{\n}$ follows a uniform distribution on the sphere $\S^{d-1}$ and we let for $t \in \R^{d}$
	\begin{equation*}
		R(t)=\esp{R_{\n}(t)}=\int_{\S^{d-1}}\cos{2\pi \langle \gamma, t \rangle} \sigma(\mathrm{d}\gamma),
	\end{equation*}
	where $\sigma$ is the uniform measure on the sphere.\medbreak

	Then, $R$ defines a radial function which converges to $0$ at infinity and which has a unique maximum at $t=0$.
\end{lemma}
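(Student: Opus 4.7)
The plan is to verify separately the three asserted properties. First, radiality follows directly from the rotation invariance of the uniform measure $\sigma$ on $\S^{d-1}$: for any rotation $\mathcal{R}\in O(d)$, the change of variables $\gamma\mapsto\mathcal{R}^{-1}\gamma$ combined with $\sigma\circ\mathcal{R}^{-1}=\sigma$ gives
\begin{equation*}
R(\mathcal{R}t)=\int_{\S^{d-1}}\cos{2\pi\langle\gamma,\mathcal{R}t\rangle}\,\sigma(\mathrm{d}\gamma)=\int_{\S^{d-1}}\cos{2\pi\langle\mathcal{R}^{-1}\gamma,t\rangle}\,\sigma(\mathrm{d}\gamma)=R(t),
\end{equation*}
so $R$ depends only on $|t|$.

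Second, I would show uniqueness of the maximum at $0$. Trivially, $R(0)=1$ since $\sigma$ is a probability measure, and the pointwise bound $\cos{2\pi\langle\gamma,t\rangle}\leq 1$ yields $R(t)\leq 1$ for every $t$. For strict inequality at any $t\neq 0$, it suffices to note that equality $R(t)=1$ would force $\cos{2\pi\langle\gamma,t\rangle}=1$ for $\sigma$-almost every $\gamma$, i.e.\ $\langle\gamma,t\rangle\in\Z$ on a set of full $\sigma$-measure. But for fixed $t\neq 0$ the latter set is the intersection of $\S^{d-1}$ with the countable family of affine hyperplanes $\{y\in\R^{d}:\langle y,t\rangle=k\}$ for $k\in\Z$, and each such intersection is a $(d-2)$-dimensional sub-manifold of zero $\sigma$-measure. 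Hence $R(t)<1$ strictly, which gives the uniqueness of the maximum.

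Third, and this is the only step where any genuine analysis enters, the decay $R(t)\to 0$ as $|t|\to\infty$ can be obtained by recognising $R$ as (the real part of) the Fourier transform of the surface measure $\sigma$. In the dimension $d=2$ relevant to the rest of the paper one has the explicit identity $R(t)=J_{0}(2\pi|t|)$, and the classical Bessel asymptotic $J_{0}(s)=O(s^{-1/2})$ yields the desired decay at once; for general $d\geq 2$ the same conclusion follows from the standard stationary-phase bound $O(|\xi|^{-(d-1)/2})$ for the Fourier transform of a smooth compactly supported surface measure with non-vanishing Gaussian curvature. The first two steps being essentially formal, this Fourier/Bessel estimate is the only external ingredient that the argument genuinely needs; it is also the place where the assumption $d\geq 2$ matters, since in dimension one $\sigma$ is supported on two points and $R$ becomes a non-decaying cosine.
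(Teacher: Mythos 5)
Your proof is correct, and the overall route (identify $R$ with the Fourier transform $\widehat\sigma$ of the uniform surface measure, then use Bessel-function/stationary-phase asymptotics for the decay) is the same one the paper takes. The paper is slightly more terse: it writes out the explicit formula $R(t)=\omega\,J_{\nu}(2\pi|t|)/|t|^{\nu}$ with $\nu=d/2-1$ and simply invokes ``standard properties of Bessel functions'' for all three conclusions. You instead derive radiality directly from the rotation-invariance of $\sigma$ by a change of variables, and you give a self-contained measure-theoretic argument for strictness of the maximum (the set $\{\gamma\in\S^{d-1}:\langle\gamma,t\rangle\in\Z\}$ is a countable union of $(d-2)$-dimensional slices and hence $\sigma$-null for $d\geq 2$). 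That argument is cleaner than appealing to the sign changes/oscillations of $J_{\nu}$ and has the virtue of isolating exactly where the assumption $d\geq 2$ enters, which the paper leaves implicit. One small omission: the lemma also asserts that each marginal $\gamma_{k}$ is uniform on $\S^{d-1}$, which the paper derives from the isotropy hypothesis via uniqueness of the rotation-invariant probability on the sphere; you take the integral formula for $R(t)=\esp{R_{\n}(t)}$ as given rather than deducing it from Assumption \ref{ass:gammaN}. This is harmless but worth a sentence.
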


\begin{proof}
	 It is standard fact that the only rotation invariant probability on the sphere $\S^{d-1}$ is the uniform measure Hausdorff measure on $\S^{d-1}$. Hence, each component of $\gamma^{\n}$ has to follow a uniform distribution.\medbreak

	 We notice that, by symmetry,
	 \begin{equation*}
	 	R(t)=\int_{\S^{d-1}}e^{2i\pi \langle \gamma,t\rangle} \sigma(d\gamma)
	 \end{equation*}
 	so that $R(t) = \widehat{\sigma}(t)$ where $\widehat{.}$ refers to the Fourier transform. In particular, this leads to an explicit formula for $R(t)$, that is
 	\begin{equation*}
 		R(t)=\omega\frac{J_{\nu}(2\pi|t|)}{|t|^{\nu}}
 	\end{equation*}
 	where $J_{\nu}$ is the $\nu$\textsuperscript{th} Bessel function of first kind,  $\nu = d/2-1$ and $\omega$ a constant chosen so that $R(0)=1$. The conclusion of Lemma \ref{lemma 5.1} follows then from the standard properties of the Bessel functions.
\end{proof}

We are now ready to prove Proposition \ref{proposition 2.5}. Let $a<c/d$ where $c$ is the constant in Hoeffding's inequality and let $\varepsilon_{\n}$ be a positive sequence converging to $0$. We define $\tau_{\n}$ as the smallest $\varepsilon_{\n}$-almost period of $R_{\n}$. According to Hoeffding's inequality in Assumption \ref{ass:gammaN},
\begin{equation}\label{equation 4}
	\mathbb{P}\left(\left|R_{\n}(t)-R(t)\right|\geq 1-\varepsilon_{\n}\right) \leq Ce^{-c(1-\varepsilon_{\n})^{2}\n}, \quad t \in \R^{d},
\end{equation}

To make use of the latter inequality, we cover the centered ball with radius $e^{a\n}$ with $d_{\n}=O\left(\n^{d} e^{da\n}\right)$ balls of radius at most $\frac{1}{\n}$. Let $C_{i,\n}$ denote these latter balls and $t_{i,\n}$ be their respective centers. As both $R_{\n}$ and $R$ are $2\pi$-Lipschitz continuous, one has that
\begin{equation*}
	\sup_{t \in C_{i,n}} |R_{\n}(t)-R(t))| \leq |R_{\n}(t_{i,\n})-R(t_{i,\n})|+\frac{2\pi}{\n}.
\end{equation*}

Hence, for any $\varepsilon>0$,
\begin{equation*}\small
	\mathbb{P}\left(\limsup_{\n\to+\infty}\left\{\sup_{|t|\leq e^{a\n}} |R_{\n}(t)-R(t)| \geq 1-\varepsilon_{\n}\right\}\right)=\mathbb{P}\left(\limsup_{\n\to+\infty}\left\{\sup_{1 \leq i \leq d_{\n}} |R_{\n}(t_{i,\n})-R(t_{i,\n})| \geq 1-\varepsilon_{\n}\right\}\right).
\end{equation*}
By the union bound and Equation \eqref{equation 4},
\begin{align*}
	\mathbb{P}\left(\sup_{1 \leq i \leq d_{\n}}|R_{\n}(t_{i,\n})-R(t_{i,\n})| \geq 1-\varepsilon_{\n}\right) &\leq d_{n} Ce^{-c(1-\varepsilon_{\n})^{2}\n}\\
	&=O\left(\n^{d}e^{-c(1-\varepsilon_{\n})^{2} \n+ ad\n}\right)\\
	&=O\left(\n^{d}e^{-(c-ad)\n}\right)
\end{align*}
which is summable as $\varepsilon_{\n}$ converges to $0$. Borel-Cantelli lemma implies that
\begin{equation*}
	\mathbb{P}\left(\limsup_{n\to+\infty}\left\{\sup_{|t|\leq e^{a\n}} |R_{n}(t)-R(t)| \geq  1-\varepsilon_{\n}\right\}\right) = 0.
\end{equation*}
Hence, $R_{\n}$ converges uniformly to $R$ on balls of radius $e^{a\n}$. But, as $R(t)$ is bounded away from $1$ everywhere except on a neighbourhood of $0$, this implies that  there is asymptotically no $\varepsilon_{\n}$-almost periods smaller than $e^{a\n}$. In particular, Dirichlet's approximation theorem bound is the best possible in that case and Proposition \ref{proposition 2.7} is proved.

\section{Appendix}

\subsection{Topology of uniform convergence of derivatives}

This first chapter recalls briefly the main results associated with the topology of uniform convergence of the first $p$ derivatives on a compact set. In what follows, $\Omega \subset \R^{2}$ will designate a convex, bounded open set with locally $\mathcal{C}^{1}$ boundary. These latter assumptions can be relaxed, but in the framework of this article, there is no need to dwell into greater generality.

\subsubsection*{Notation}
When $\alpha \in \N^{2}$, the notation $|\alpha| = \alpha_{1}+\alpha_{2}$ will be used. In other context, it will refer to the standard euclidean norm.

\subsubsection*{Topology of uniform convergence of derivatives}

Let $p$ be a positive integer. We define the set $\mathcal{C}^{p}(\overline{\Omega})$ of $p$-times differentiable functions on $\overline{\Omega}$ as the set of functions $f : \overline{\Omega} \to \R$ satisfying
\begin{enumerate}
	\item $f$ is $\mathcal{C}^{p}(\Omega)$, that is $f$ is $p$-times continuously differentiable on $\Omega$.
	\item  $\partial^{\alpha}f$ extend continuously to $\overline{\Omega}$ for any $|\alpha| \leq p$.
\end{enumerate}

It is natural to equip $\mathcal{C}^{p}(\overline{\Omega})$ with the norm

\begin{equation*}
	||f||_{p}:=\sum_{|\alpha| \leq p} ||\partial^{\alpha} f||_{\infty}
\end{equation*}
where $||.||_{\infty}$ is the uniform norm on $\overline{\Omega}$.\medbreak

In particular, the definition of $\left(\mathcal{C}^{0}(\overline{\Omega}), ||.||_{0}\right)$ coincides with the topology of uniform convergence on compact set for continuous function. We recall now some standard facts on $\mathcal{C}^{p}(\overline{\Omega})$.

\begin{lemma}\label{lemma 6.1}
	$\mathcal{C}^{p}(\overline{\Omega})$ endowed with $||.||_{p}$, is a Polish space, that is separable and complete.
\end{lemma}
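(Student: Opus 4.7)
The plan is to verify completeness and separability separately, exploiting that $(\mathcal{C}^0(\overline{\Omega}), ||\cdot||_\infty)$ is already a Polish Banach space by the Stone--Weierstrass theorem and compactness of $\overline{\Omega}$.

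For completeness, I would take a Cauchy sequence $(f_n)$ in $\mathcal{C}^p(\overline{\Omega})$. From the definition of $||\cdot||_p$, for each multi-index $|\alpha| \leq p$ the sequence $(\partial^\alpha f_n)_n$ is Cauchy in $(\mathcal{C}^0(\overline{\Omega}), ||\cdot||_\infty)$, hence converges uniformly to some $g_\alpha \in \mathcal{C}^0(\overline{\Omega})$. Setting $f := g_0$, the task reduces to showing by induction on $|\beta|$ that $\partial^\beta f = g_\beta$. The inductive step is the classical interchange-of-limits argument: convexity of $\Omega$ guarantees that for any $x \in \Omega$ and small enough $t$ the segment $[x, x+te_i]$ lies in $\Omega$, and the fundamental theorem of calculus gives
\begin{equation*}
\partial^\beta f_n(x+te_i) - \partial^\beta f_n(x) = \int_0^t \partial^{\beta+e_i} f_n(x+se_i)\, ds.
\end{equation*}
Letting $n \to \infty$ with uniform convergence on both sides and then differentiating in $t$ yields $\partial_i g_\beta = g_{\beta+e_i}$, completing the induction. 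The identities $\partial^\alpha f = g_\alpha$ then extend to $\overline{\Omega}$ by continuity of $g_\alpha$, so $f \in \mathcal{C}^p(\overline{\Omega})$ and $||f_n - f||_p \to 0$.

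For separability, compactness of $\overline{\Omega}$ together with Stone--Weierstrass shows that polynomials with rational coefficients form a countable dense subset of $(\mathcal{C}^0(\overline{\Omega}), ||\cdot||_\infty)$, so this space is separable. Equipped with the sum of uniform norms, the finite product $E := \bigoplus_{|\alpha| \leq p} \mathcal{C}^0(\overline{\Omega})$ is separable as well. The linear map
\begin{equation*}
J : \mathcal{C}^p(\overline{\Omega}) \longrightarrow E, \qquad f \longmapsto (\partial^\alpha f)_{|\alpha| \leq p},
\end{equation*}
is an isometric embedding, so the image $J(\mathcal{C}^p(\overline{\Omega}))$ is separable as a subset of a separable metric space, and pulling back a countable dense subset through $J^{-1}$ yields a countable dense subset of $\mathcal{C}^p(\overline{\Omega})$.

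The only step that will require some care is the differentiation-under-the-limit passage in the completeness proof; the convexity of $\Omega$ together with the $\mathcal{C}^1$ boundary hypothesis is exactly what makes the classical integration-along-segments argument go through cleanly, ensuring in particular that the limiting identities extend continuously up to $\partial \Omega$. Once completeness is settled, separability follows essentially for free from the isometric embedding into a finite product of spaces of continuous functions on a compact set.
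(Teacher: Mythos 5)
Your proof is correct. The completeness half is essentially the same as the paper's: both reduce to the classical fact that uniform convergence of $f_n$ and of $\partial^\alpha f_n$ forces $\partial^\alpha f = \lim \partial^\alpha f_n$; you spell out the fundamental-theorem-of-calculus-along-segments argument (where convexity is used) which the paper compresses into a citation of Lemma \ref{lemma 6.2}. The separability half, however, is genuinely different and in fact cleaner than the paper's. The paper proves the stronger statement that polynomials are dense in $\mathcal{C}^p(\overline{\Omega})$, which requires the Whitney extension theorem to push $f$ out to $\mathcal{C}^p(\mathbb{R}^2)$, then a mollifier-plus-Weierstrass argument; the paper even remarks that polynomial density can fail for pathological domains, which is why the regularity of $\Omega$ is invoked there. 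You sidestep all of this by observing that the jet map $J : f \mapsto (\partial^\alpha f)_{|\alpha| \leq p}$ is an isometric embedding of $\mathcal{C}^p(\overline{\Omega})$ into the finite product $\bigoplus_{|\alpha|\leq p}\mathcal{C}^0(\overline{\Omega})$, which is separable, and a subspace of a separable \emph{metric} space is separable. This avoids Whitney entirely and only uses Stone--Weierstrass at the $\mathcal{C}^0$ level. The trade-off is that the paper's argument yields a concrete dense family (rational polynomials), which can be convenient elsewhere, whereas yours gives separability by a soft abstract argument without an explicit dense set. Both are valid; yours is the more economical route to the lemma as stated.
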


\begin{proof}
	The family of polynomials $\Q[x,y]$ is a dense family in $\mathcal{C}^{p}(\overline{\Omega})$. This result is surely true, but it is not as obvious as it seems... Indeed, for pathological bounded open sets $\Omega$, the density of polynomials may dramatically fail (see this discussion \cite{MO}).\medbreak

	In our setting, the regularity assumptions put on $\Omega$ forbids any pathological behaviour. As $\Omega$ is convex, it satisfies the assumptions of Whitney extension theorem \cite{Wh1934}, which ensures that any $f \in \mathcal{C}^{p}(\overline{\Omega})$ can be extended to a function of class $\mathcal{C}^{p}(\R^{2})$.\medbreak

	The separability follows then from a routine argument. Consider a mollifier, for instance the density of standard $2$-dimensional random normal variable
	and let $\varphi_{n}(t):=2^{-n}\varphi(2^{n}t)$. Then $\varphi_{n} \circledast f$ is a smooth function whose all partial derivatives of order $|\alpha| \leq p$ converge uniformly on compact sets to those of $f$. Weierstrass theorem ensures that $\varphi_{n}$ can be well-approximated by a polynomial $P_{n}$ on the euclidean unit ball $B$ and we might choose the sequence $P_{n}$ so that
	\begin{equation*}
		\sup_{B}|P_{n}-\varphi_{n}|\to 0.
	\end{equation*}
	To conclude the proof, one has to check that
	\begin{equation*}
		\mathds{1}_{B}P_{n}\circledast f
	\end{equation*}
	is a polynomial which approximates arbitrarily closely $f$ in the $\mathcal{C}^{p}(\overline{\Omega})$ topology.

	We now prove the completeness of $\mathcal{C}^{p}(\overline{\Omega})$. Recall that $\mathcal{C}^{0}(\overline{\Omega})$ equipped with $||.||_{\infty}$ is a Banach space.\medbreak

	Let $(f_{n})$ be a Cauchy sequence in $\mathcal{C}^{p}(\overline{\Omega})$. For every $|\alpha| \leq p$, $(\partial^{\alpha} f_{n})$ is a Cauchy sequence in $\mathcal{C}^{0}(\overline{\Omega})$, hence there is a continuous function $f_{\alpha}$ on $\overline{\Omega}$ such that

	\begin{equation*}
		||\partial^{\alpha}f_{n} - f_{\alpha}||_{\infty}  \longrightarrow 0.
	\end{equation*}

	We note $f:=f_{(0,0)}$. To finish the proof, it suffices to show that $f$ is $\mathcal{C}^{p}(\Omega)$ and $\partial^{\alpha}f=f_{\alpha}$. This follows immediately from the following well-known lemma.

	\begin{lemma}\label{lemma 6.2}
		Let $(f_{n})$ be a sequence of differentiable functions such that $f_{n}$ converges pointwise to $f$ and $f_{n}'$ converges uniformly to $g$. Then, $f$ is differentiable on $]a,b[$ and $f'=g$.
	\end{lemma}
\end{proof}

Results related to the compactness of $\mathcal{C}^{0}(\overline{\Omega})$ such as Arzelà–Ascoli have a straightforward extension to $\mathcal{C}^{p}(\overline{\Omega})$. This derives from the following observation.

\begin{lemma}\label{lemma 6.3}
	Let $A \subset \mathcal{C}^{p}(\overline{\Omega})$. The two following proposition are equivalent.

	\begin{enumerate}
		\item $A$ is relatively compact in $\mathcal{C}^{p}(\overline{\Omega})$.
		\item For any $|\alpha | \leq p$,
		\begin{equation*}
			A(\alpha) : = \{\partial^{\alpha} f :  f\in A\}
		\end{equation*}
		is relatively compact in $\mathcal{C}^{0}(\overline{\Omega})$
	\end{enumerate}
\end{lemma}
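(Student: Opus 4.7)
The plan is to handle the two implications separately, exploiting the fact that the only nontrivial content is moving from coordinate-wise relative compactness (of the partial derivatives in $\mathcal{C}^{0}(\overline{\Omega})$) back up to joint relative compactness in $\mathcal{C}^{p}(\overline{\Omega})$.

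For the direction (1) $\Rightarrow$ (2), I would simply observe that for each multi-index $\alpha$ with $|\alpha|\leq p$, the linear map
\[
\partial^{\alpha} : \mathcal{C}^{p}(\overline{\Omega}) \longrightarrow \mathcal{C}^{0}(\overline{\Omega})
\]
is continuous, because $\|\partial^{\alpha}f\|_\infty \leq \|f\|_p$ by the very definition of $\|\cdot\|_p$. The image of a relatively compact set under a continuous map is relatively compact, so each $A(\alpha)$ is relatively compact in $\mathcal{C}^{0}(\overline{\Omega})$.

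For the reverse direction (2) $\Rightarrow$ (1), let $(f_n)\subset A$. Since the set of multi-indices $\{\alpha\in\mathbb{N}^2 : |\alpha|\leq p\}$ is finite, I can perform a finite diagonal extraction: starting from $\alpha=(0,0)$ and using relative compactness of $A(\alpha)$ in $\mathcal{C}^{0}(\overline{\Omega})$, extract a subsequence along which $\partial^{\alpha}f_n$ converges uniformly on $\overline{\Omega}$ to some $g_\alpha \in \mathcal{C}^{0}(\overline{\Omega})$; iterate over all $\alpha$ with $|\alpha|\leq p$. This produces one subsequence (still denoted $(f_n)$) such that $\partial^{\alpha}f_n \to g_\alpha$ uniformly on $\overline{\Omega}$ for every $|\alpha|\leq p$.

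The main obstacle is then identifying the limits: one must show that $f:=g_{(0,0)}$ belongs to $\mathcal{C}^{p}(\overline{\Omega})$ and that $\partial^{\alpha}f = g_\alpha$ for all $|\alpha|\leq p$. This is exactly where Lemma \ref{lemma 6.2} (uniform convergence of derivatives on each straight segment inside the convex set $\Omega$ permits interchange of limit and differentiation) is applied, inductively on $|\alpha|$: knowing the identification for multi-indices of length $k<p$, for each $\alpha$ of length $k+1$ write $\partial^{\alpha} = \partial_i \partial^{\beta}$ with $|\beta|=k$; then $\partial^{\beta}f_n \to g_\beta$ uniformly and $\partial_i \partial^{\beta}f_n = \partial^{\alpha}f_n \to g_\alpha$ uniformly, so by Lemma \ref{lemma 6.2} applied segment-wise (available because $\Omega$ is convex), $\partial_i g_\beta$ exists and equals $g_\alpha$, hence $\partial^{\alpha}f = g_\alpha$ by the induction hypothesis. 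Each $g_\alpha$ is continuous on $\overline{\Omega}$ by construction, so $f\in \mathcal{C}^{p}(\overline{\Omega})$, and finally
\[
\|f_n - f\|_p = \sum_{|\alpha|\leq p} \|\partial^{\alpha}f_n - g_\alpha\|_\infty \longrightarrow 0,
\]
which proves that $A$ is relatively compact in $\mathcal{C}^{p}(\overline{\Omega})$.
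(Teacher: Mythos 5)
Your proof is correct and follows essentially the same approach as the paper: the direct implication via continuity of $\partial^{\alpha}$, and the converse via successive extraction of subsequences using relative compactness of each $A(\alpha)$ in $\mathcal{C}^{0}(\overline{\Omega})$, followed by identification of the limit derivatives via Lemma \ref{lemma 6.2}. Your version is somewhat more detailed than the paper's (which abbreviates the identification step to ``similarly to the proof of Lemma \ref{lemma 6.1}''), notably in making explicit the finite diagonal extraction, the induction on $|\alpha|$, and the role of convexity of $\Omega$ in applying Lemma \ref{lemma 6.2} along segments.
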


\begin{proof}
	The direct implication is immediate as
	\begin{equation*}
		||\partial^{\alpha}f||_{\infty} \leq ||f||_{p}.
	\end{equation*}

	For the converse, consider a subsequence $f_{n}$ in $A$. Using the relative compactness of $A(\alpha)$ in $\mathcal{C}^{0}(\overline{\Omega})$, we can extract a subsequence $f_{\sigma(n)}$ such that
	\begin{equation*}
		||\partial^{\alpha}f_{\sigma(n)} -f_{\alpha}||_{\infty} \longrightarrow 0
	\end{equation*}
	where each $f_{\alpha}$ is some continuous function on $\overline{\Omega}$ and similarly to the proof of Lemma \ref{lemma 6.1}, one has $\partial^{\alpha} f = f_{\alpha}$ where $f:=f_{(0,0)}$.
\end{proof}

\subsubsection*{Borel sets and convergence in distribution in $\mathcal{C}^{p}(\overline{\Omega})$}

In the previous paragraph, it was shown that $\mathcal{C}^{p}(\overline{\Omega})$ is Polish space. This is the natural framework to develop a well-behaved notion of convergence in distribution as Prokhorov's theorem is satisfied. The treatment of this question is inspired by Kallenberg \cite[Chapter 15]{Kal2002}.\medbreak

There are two natural notions of measurable sets in $\mathcal{C}^{p}(\overline{\Omega})$, one is given by the Borel-algebra $\borel{p}$, the other is given by the cylindrical $\sigma$-algebra $\Cyl{p}$, that is the coarsest  $\sigma$-algebra making the projection
\begin{equation*}
	\Pi_{I} :
	\left\{
	\begin{array}{lllc}
		&\mathcal{C}^{p}(\overline{\Omega}) &\longrightarrow &\R^{I}\\
		&f &\longmapsto  &\{f(t) : t \in I\}
	\end{array}
	\right.
\end{equation*}
measurable where $I$ is any finite subset of $\Omega$. Luckily, these two approaches can be unified into a unique one as they lead to the same measurable sets.

\begin{lemma}\label{lemma 6.4}
	$\borel{p} = \Cyl{p}$
\end{lemma}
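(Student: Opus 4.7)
The plan is to prove $\borel{p}=\Cyl{p}$ by the two inclusions. The inclusion $\Cyl{p}\subset \borel{p}$ is immediate: for every $t\in\overline{\Omega}$ the evaluation $e_t:f\mapsto f(t)$ satisfies $|e_t(f)-e_t(h)|\leq \|f-h\|_\infty\leq \|f-h\|_p$, so it is $1$-Lipschitz, hence continuous and in particular Borel measurable. Each finite projection $\Pi_I$ is then Borel measurable as a product of such evaluations, which gives the inclusion.

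For the reverse inclusion I would exploit the separability of $\mathcal{C}^p(\overline{\Omega})$ established in Lemma \ref{lemma 6.1}: the Borel $\sigma$-algebra is generated by the countable family of open balls centered at a countable dense subset and of rational radii. It thus suffices to show that the norm $N:f\mapsto \|f\|_p$ is $\Cyl{p}$-measurable, since the corresponding statement for $f\mapsto \|f-g\|_p$ follows by replacing each evaluation $f(t)$ by $f(t)-g(t)$ in the argument below. Writing $\|f\|_p=\sum_{|\alpha|\leq p}\|\partial^\alpha f\|_\infty$ and fixing a countable dense set $D\subset \Omega$, the continuity of $\partial^\alpha f$ on $\overline{\Omega}$ yields
\[
\|\partial^\alpha f\|_\infty=\sup_{t\in D}|\partial^\alpha f(t)|,
\]
reducing the problem to showing that $f\mapsto \partial^\alpha f(t)$ is $\Cyl{p}$-measurable for each fixed $t\in D$ and $|\alpha|\leq p$.

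The main step, which is where the nontrivial content of the lemma lies, is the recovery of derivatives from pointwise values of $f$. For $t\in D\subset \Omega$ open and $n$ large enough that the grid points $t+n^{-1}(\beta_1 e_1+\beta_2 e_2)$ remain in $\Omega$ for all $0\leq \beta_i\leq \alpha_i$, the mixed finite-difference quotient
\[
\Delta_n^\alpha f(t):=n^{|\alpha|}\sum_{\beta_1=0}^{\alpha_1}\sum_{\beta_2=0}^{\alpha_2}(-1)^{|\alpha|-\beta_1-\beta_2}\binom{\alpha_1}{\beta_1}\binom{\alpha_2}{\beta_2}\,f\!\left(t+\tfrac{\beta_1}{n}e_1+\tfrac{\beta_2}{n}e_2\right)
\]
is a finite linear combination of evaluations of $f$, hence $\Cyl{p}$-measurable in $f$. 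Because $f\in \mathcal{C}^p(\overline{\Omega})$, standard Taylor estimates give $\Delta_n^\alpha f(t)\to \partial^\alpha f(t)$ as $n\to \infty$, exhibiting $\partial^\alpha f(t)$ as a pointwise limit of $\Cyl{p}$-measurable functions, therefore itself $\Cyl{p}$-measurable. Chaining back through the countable supremum and the finite sum over $\alpha$ gives the $\Cyl{p}$-measurability of $\|f\|_p$ and hence the inclusion $\borel{p}\subset \Cyl{p}$. The main obstacle is genuinely this reconstruction step, since the cylindrical $\sigma$-algebra only records point values of $f$, not of its derivatives, and one must express the latter as a countable limit of the former.
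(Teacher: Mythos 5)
Your proposal is correct and follows essentially the same route as the paper's proof: the inclusion $\Cyl{p}\subset\borel{p}$ via continuity of the evaluation maps, reduction by separability to a countable generating family of balls, and the key step of expressing $\partial^{\alpha} f(t)$ as a pointwise limit of finite-difference quotients built from evaluations of $f$, which makes $f\mapsto\partial^{\alpha}f(t)$ cylindrically measurable. The only cosmetic difference is that you package the reduction as measurability of the norm $f\mapsto\|f-g\|_p$ via a countable supremum, whereas the paper writes the closed ball directly as a countable intersection over tuples $(t_{\alpha})\in D^{A}$; these are two ways of saying the same thing, and your spelled-out formula for $\Delta_n^{\alpha}$ is a welcome explicitation of the paper's more terse statement about discrete derivatives.
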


\begin{proof}
	$\Pi_{I}$ is continuous, hence $\Cyl{p} \subset \borel{p}$.\medbreak

	For the converse, it suffices to show that any open set belongs to $\Cyl{p}$. As $\mathcal{C}^{p}(\overline{\Omega})$ is separable, it further reduces into showing that any closed ball $B$ is  $\Cyl{p}$-measurable. We denote by $f$ the center of $B$ and $r$ its radius.\medbreak

	If $A := \{\alpha \in \N^{2} : |\alpha| \leq p\}$ and $D$ is a dense countable subset of $\Omega$, then
	\begin{equation*}
		B = \bigcap_{(t_{\alpha}) \in D^{A}} \left(g \in \mathcal{C}^{p}(\overline{\Omega}) : \sum_{|\alpha| \leq p} \left|\partial^{\alpha} g(t_{\alpha})-\partial^\alpha f(t_{\alpha})\right| \leq r \right).
	\end{equation*}

	Thus, proving the reverse inclusion amounts to prove that
	\begin{equation*}
		\Pi^{\alpha}_{t} :
		\left\{
		\begin{array}{lllc}
			&\mathcal{C}^{p}(\overline{\Omega}) &\longrightarrow &\R\\
			&f &\longmapsto  &\partial^{\alpha} f(t)
		\end{array}
		\right.
	\end{equation*}
	is  $\Cyl{p}$-measurable.\medbreak

	Let $(e_{1}, e_{2})$ be the canonical basis of $\R^{2}$ and let
	\begin{equation*}
		\Delta_{h}^{i}f(t) := \frac{f(t+he_{i})-f(t)}{h}
	\end{equation*}
	be the discrete partial derivatives of order $f$. This definition extends naturally to higher derivatives $\Delta_{h}^{\alpha}$ and Taylor formula ensures that
	\begin{equation*}
		\lim_{h \to 0} \Delta_{h}^{\alpha}f(t) = \partial^{\alpha}f(t), \quad t \in \Omega.
	\end{equation*}
	As a pointwise limit of measurable functions, $\Pi_{t}^{\alpha}$ is thus $\Cyl{p}$-measurable.
\end{proof}

In particular, the preceding lemma ensure that a distribution in $\mathcal{C}^{p}(\overline{\Omega})$ is characterised by its finite-dimensional distributions.\medbreak

More generally, it ensures that a random variable defined on the product space $\mathcal{C}^{p}(\overline{\Omega})^{2}$ is also characterised by its finite dimensional. Indeed,
as $\mathcal{C}^{p}(\overline{\Omega})$ is separable, one has
\begin{align*}
	\mathcal{B}\left(\mathcal{C}^{p}(\overline{\Omega})^{2}\right)&=\borel{p}\otimes\borel{p}\\
	&=\Cyl{p}\otimes \Cyl{p}
\end{align*}
This result was used in the proof of Proposition \ref{proposition 4.1}.\medbreak

Alongside the cylindrical topology, we will need some results relative to the tightness of a sequence of random variables in $\mathcal{C}^{p}(\overline{\Omega)}$.

\begin{proposition}\label{proposition 6.5}
	Let $(X_{n})$ a sequence random variables on $\mathcal{C}^{p}(\overline{\Omega})$. Then, the two proposition are equivalent:
	\begin{enumerate}
		\item $(X_{n})$ is tight.
		\item For any $|\alpha| \leq p$, the sequence $(\partial^{\alpha}X_{n})$ is tight for the topology of uniform convergence on $\overline{\Omega}$.
	\end{enumerate}
\end{proposition}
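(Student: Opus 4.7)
The plan is to deduce Proposition \ref{proposition 6.5} from Prokhorov's theorem, which applies because both $\mathcal{C}^p(\overline{\Omega})$ and $\mathcal{C}^0(\overline{\Omega})$ are Polish (Lemma \ref{lemma 6.1}). Under this theorem, tightness amounts to the existence, for every $\varepsilon>0$, of a compact set carrying mass at least $1-\varepsilon$ uniformly in $n$. The bridge between the two tightness statements will then be Lemma \ref{lemma 6.3}, which characterises relatively compact subsets of $\mathcal{C}^p(\overline{\Omega})$ via the relative compactness of their images under each $\partial^\alpha$.

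For the implication $(1) \Rightarrow (2)$, I would fix $\varepsilon >0$ and a compact $K \subset \mathcal{C}^p(\overline{\Omega})$ with $\mathbb{P}(X_n \in K)\geq 1-\varepsilon$ for all $n$. The partial derivative operator $\partial^\alpha : \mathcal{C}^p(\overline{\Omega}) \to \mathcal{C}^0(\overline{\Omega})$ is continuous since $\|\partial^\alpha f\|_\infty \leq \|f\|_p$, so $\partial^\alpha (K)$ is compact in $\mathcal{C}^0(\overline{\Omega})$, and clearly $\mathbb{P}(\partial^\alpha X_n \in \partial^\alpha(K)) \geq 1-\varepsilon$. This yields tightness of $(\partial^\alpha X_n)$ in $\mathcal{C}^0(\overline{\Omega})$.

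For the converse $(2)\Rightarrow(1)$, set $A := \{\alpha \in \mathbb{N}^2 : |\alpha|\leq p\}$ and fix $\varepsilon>0$. By hypothesis, for each $\alpha \in A$ there exists a compact $K_\alpha \subset \mathcal{C}^0(\overline{\Omega})$ with $\mathbb{P}(\partial^\alpha X_n \in K_\alpha) \geq 1-\varepsilon/|A|$ uniformly in $n$. I would then consider
\begin{equation*}
K := \bigcap_{\alpha \in A} (\partial^\alpha)^{-1}(K_\alpha) \subset \mathcal{C}^p(\overline{\Omega}),
\end{equation*}
which is closed as an intersection of preimages of closed sets by continuous maps. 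Since $\partial^\alpha(K)\subset K_\alpha$ is relatively compact in $\mathcal{C}^0(\overline{\Omega})$ for each $\alpha$, Lemma \ref{lemma 6.3} gives that $K$ is relatively compact, hence compact. A union bound finishes the job:
\begin{equation*}
\mathbb{P}(X_n \notin K) \leq \sum_{\alpha \in A} \mathbb{P}(\partial^\alpha X_n \notin K_\alpha) \leq \varepsilon.
\end{equation*}

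There is no genuine obstacle here; the argument is a routine Polish-space manipulation that merely glues together Prokhorov, Lemma \ref{lemma 6.3}, and continuity of the derivative operators. The only mild point of care is to ensure that the candidate compact $K$ is both closed and has the right image under each $\partial^\alpha$, so that Lemma \ref{lemma 6.3} applies cleanly.
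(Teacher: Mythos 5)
Your proof is correct and rests on the same key ingredient as the paper's (very terse) argument, namely Lemma~\ref{lemma 6.3} characterising relative compactness in $\mathcal{C}^{p}(\overline{\Omega})$ via the $\partial^{\alpha}$ images; you simply make explicit the construction of the compact sets (and correctly flag that the intersection of preimages must be shown closed before Lemma~\ref{lemma 6.3} upgrades it to compact). One small remark: Prokhorov's theorem is announced in your opening paragraph but never actually used — your argument works directly from the definition of tightness, so the appeal to Prokhorov can be dropped without loss.
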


\begin{proof}
	The proof relies on Prokhorov theorem \cite[Theorem 16.3]{Kal2002}. In a Polish space, a sequence of random variable is tight if and only if it is relatively compact.\medbreak

	But according to Lemma \ref{lemma 6.3}, $(X_{n})$ is tight in $\mathcal{C}^{p}(\overline{\Omega})$ if and only if $(\partial^{\alpha}X_{n})$ is tight in  $\mathcal{C}^{0}(\overline{\Omega})$ for any $|\alpha|\leq p$.
\end{proof}

\subsection{Continuity along the nodal lines}

The second part of this appendix is dedicated to the proof of Proposition \ref{proposition 4.5}. This work is a bit tedious and it is divided into smaller steps.

\subsubsection*{Notation}

If $g$ is continuous mapping, the notation $\mathcal{Z}_{g}(\Omega)$ will be used to denote the zero sets of $g$ on $\Omega$.

\subsubsection*{General framework}We recall first Proposition \ref{proposition 4.5}.\medbreak

Let $U \subset \R^{2}$ be a bounded convex open set with smooth boundary and $f \in \mathcal{C}^{1}(\overline{U})$ be a regular function. If $\Omega$ is an open convex subset of $U$ and $\varphi$ a continuous function on $\Omega$, then, provided that $f^{-1}(0)$ does not accumulate on $\partial \Omega$, the map
\begin{equation*}
	\Lambda(g):=\int_{\mathcal{Z}_{g}(\Omega)}f(t)H_{1}(\mathrm{d}t)
\end{equation*}
is continuous at $f$. More precisely, the goal is to prove the following proposition.

\begin{proposition}\label{proposition 6.6}
	 Under the preceding assumptions and provided that
	\begin{equation}\label{equation 6.6.1}
		H_{1}(\mathcal{Z}_{f}(\partial \Omega))=0,
	\end{equation}
	the map $\Gamma$ is continuous in $f$ for the $\mathcal{C}^{1}(\overline{U})$-topology.
\end{proposition}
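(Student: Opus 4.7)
The plan is to combine a local $\mathcal{C}^1$-stability of the nodal set, obtained from the implicit function theorem, with a thin-tube estimate near $\partial \Omega$ that absorbs the pathology illustrated by Example \ref{example 4.3}.

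First, I would set up the local picture. By regularity of $f$ and compactness of $\overline{U}$, one has $|\nabla f| \geq 2c$ on some closed tube $T_0 = \{|f|\leq \varepsilon_0\}$ that contains $\mathcal{Z}_f(\overline U)$; if $\|g - f\|_{\mathcal{C}^1} < c$ then $|\nabla g|\geq c$ on $T_0$ and $\mathcal{Z}_g(\overline{U})\subset T_0$. I would then cover $\mathcal{Z}_f(\overline U)$ by finitely many open boxes $B_1,\ldots,B_N \subset T_0$ in each of which some partial derivative of $f$ is bounded below by $c$, and invoke the implicit function theorem to represent $\mathcal{Z}_f\cap B_i$ and $\mathcal{Z}_g\cap B_i$ as $\mathcal{C}^1$ graphs $y = h_{f,i}(x)$ and $y = h_{g,i}(x)$. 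Its quantitative form gives $h_{g,i}\to h_{f,i}$ in $\mathcal{C}^1$ whenever $g\to f$ in $\mathcal{C}^1$, hence the weak convergence of the arc-length measures on $\mathcal{Z}_g\cap B_i$ to those on $\mathcal{Z}_f\cap B_i$.

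Next, I would handle the boundary via a thin tube $V_\eta := \{t\in\overline\Omega : \mathrm{dist}(t,\partial\Omega)\leq \eta\}$. Since $\mathcal{Z}_f(\overline U)$ is a compact $\mathcal{C}^1$ curve of finite $H_1$-measure and $\bigcap_{\eta>0}(\mathcal{Z}_f\cap V_\eta) = \mathcal{Z}_f(\partial\Omega)$, assumption \eqref{equation 6.6.1} and continuity from above will give $H_1(\mathcal{Z}_f\cap V_\eta)\to 0$. Fixing $\varepsilon >0$ and choosing $\eta >0$ with $\|\varphi\|_\infty H_1(\mathcal{Z}_f\cap V_{2\eta})<\varepsilon$, I would split $\Gamma(g) - \Gamma(f)$ into its contributions over $\Omega\setminus V_\eta$ and over $V_\eta$. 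The first converges to $0$ thanks to the $\mathcal{C}^1$-convergence of the graphs on the compact region $\Omega\setminus V_\eta$, while the $\mathcal{C}^1$-proximity of $h_{g,i}$ and $h_{f,i}$ forces $\mathcal{Z}_g\cap V_\eta$ to lie inside an arbitrarily thin neighbourhood of $\mathcal{Z}_f\cap V_{2\eta}$ for $g$ close enough to $f$, giving $\|\varphi\|_\infty H_1(\mathcal{Z}_g\cap V_\eta)\leq \varepsilon + o_{g\to f}(1)$.

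The hardest step will be this last inclusion-with-small-error: as Example \ref{example 4.3} shows, arbitrarily small perturbations can make $\mathcal{Z}_g$ dip inside $\Omega$ at places where $\mathcal{Z}_f$ stays slightly outside, potentially adding unwanted length. The graph representation is precisely the tool that should rule this out, since $h_{g,i}$ lives in a $\mathcal{C}^1$-neighborhood of $h_{f,i}$ that shrinks as $g\to f$; this confines $\mathcal{Z}_g$ to a vanishing tubular neighborhood of $\mathcal{Z}_f$ and converts the small Hausdorff measure of $\mathcal{Z}_f$ near $\partial\Omega$ into a corresponding bound for $\mathcal{Z}_g$.
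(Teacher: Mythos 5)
Your plan reaches the same conclusion by a genuinely different decomposition, and it is sound modulo one fix. The paper first establishes a two-sided monotone inequality (Proposition~\ref{proposition 6.7}): for nested convex sets $C\subset D$ with $\overline C\subset D$, one has $\limsup_{g\to f}\int_{\mathcal{Z}_g(C)}\varphi(t)\,H_1(\mathrm{d}t)\leq\int_{\mathcal{Z}_f(D)}\varphi(t)\,H_1(\mathrm{d}t)$ together with the symmetric lower bound, and then sandwiches $\Gamma$ between approximations using $\Omega_n\nearrow\Omega$ and $\Omega^n\searrow\overline\Omega$, invoking \eqref{equation 6.6.1} only at the very end to identify $\int_{\mathcal{Z}_f(\overline\Omega)}\varphi\,\mathrm{d}H_1$ with $\int_{\mathcal{Z}_f(\Omega)}\varphi\,\mathrm{d}H_1$. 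The pay-off of that route is that any dipping of $\mathcal{Z}_g$ across $\partial\Omega$ is absorbed cheaply by the strict enlargement $C\subset D$, so no quantitative bound on $H_1(\mathcal{Z}_g)$ near $\partial\Omega$ is ever needed. Your split $\Omega=(\Omega\setminus V_\eta)\cup V_\eta$ attacks the boundary term head-on and therefore shoulders exactly that quantitative burden. Both proofs rely on the same two ingredients: local $\mathcal{C}^1$ graph representations of the zero sets varying continuously in $g$ (the paper builds them as Lemmas~\ref{lemma 6.8}--\ref{lemma 6.11} and then applies a partition of unity; you invoke the quantitative implicit function theorem directly), and the vanishing of $H_1(\mathcal{Z}_f\cap\partial\Omega)$.

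One genuine issue in your boundary step: with $V_\eta\subset\overline\Omega$ a one-sided tube, the asserted inclusion of $\mathcal{Z}_g\cap V_\eta$ in a thin neighbourhood of $\mathcal{Z}_f\cap V_{2\eta}$ fails in general, because the nearby piece of $\mathcal{Z}_f$ may sit entirely \emph{outside} $\Omega$. For example if $\mathcal{Z}_f$ is tangent to $\partial\Omega$ from the exterior (a variant of Example~\ref{example 4.3} seen from the outside), then $\mathcal{Z}_f\cap V_{2\eta}$ is at most a single point while $\mathcal{Z}_g$ can dip into $V_\eta$ along an arc of non-negligible length. The remedy is to replace the inner tube by the two-sided tube $W_\eta:=\{t\in\overline U:\operatorname{dist}(t,\partial\Omega)\leq\eta\}$; the continuity-from-above argument is unchanged since $\bigcap_{\eta>0}W_\eta=\partial\Omega$ and $H_1(\mathcal{Z}_f\cap\partial\Omega)=0$. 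Relatedly, mere containment of $\mathcal{Z}_g\cap V_\eta$ in a thin tube around $\mathcal{Z}_f$ does not by itself bound $H_1(\mathcal{Z}_g\cap V_\eta)$: one must pass through the graph parametrisation in each box $B_i$, transferring the arc length of $h_{g,i}$ to that of $h_{f,i}$ over the same base set, and only then apply the smallness of $H_1(\mathcal{Z}_f\cap W_{2\eta})$. You allude to this as the hardest step, and indeed it is the crux of your boundary estimate; it needs to be carried out explicitly before the argument is complete.
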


The latter result can be extended to open sets $\Omega$ which are well-approximated above and below in the sense that there exists $\Omega_{n}$ and $\Omega^{n}$ two sequences of respectfully increasing and decreasing open sets such that
\begin{equation*}
	\bigcup_{n \geq 1} \Omega_{n} = \Omega
\end{equation*}
and
\begin{equation*}
	\bigcap_{n \geq 1} \Omega^{n} = \overline{\Omega},
\end{equation*}
provided some regularity on $\Omega$. In the case of convex sets, these conditions are immediately satisfied.

\subsubsection*{Proof of continuity} By decomposing $\varphi$ into its positive and negative parts, one has to show that Proposition \ref{proposition 6.6} holds whenever $\varphi$ is positive. Under this additional assumption, the latter proposition is induced by the following result.

\begin{proposition}\label{proposition 6.7}
	Let $C \subset D$ be any convex open sets included in $U$ such that $\overline{C} \subset D$. Then
	\begin{equation*}
		\limsup_{g \to f} \int_{\mathcal{Z}_{g}(C)} \varphi(t)H_{1}(\mathrm{d}t) \leq \int_{\mathcal{Z}_{f}(D)}\varphi(t) H_{1}(\mathrm{d}t)
	\end{equation*}
	and
	\begin{equation*}
		\int_{\mathcal{Z}_{f}(C)}\varphi(t) H_{1}(\mathrm{d}t) \leq \liminf_{g \to f} \int_{\mathcal{Z}_{g}(D)} \varphi(t)H_{1}(\mathrm{d}t).
	\end{equation*}
\end{proposition}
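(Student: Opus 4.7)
The strategy is to exploit the regularity of $f$ via the implicit function theorem to locally straighten the nodal lines as graphs, then use the separation $\overline{C} \Subset D$ to leave a collar in which nearby zeros of $g$ can land without escaping $D$. Since one may assume $\varphi \geq 0$ by the reduction preceding the statement, both sides of each inequality are nonnegative and monotonic in the integration domain.

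First I would build a uniform tubular neighborhood of $\mathcal{Z}_f(\overline{U})$. Regularity of $f$ together with compactness of $\overline{U}$ yields constants $\eta,\delta>0$ such that $|\nabla f|\geq 2\delta$ on $V:=\{|f|<\eta\}$, making $\mathcal{Z}_f(\overline{U})$ a compact $\mathcal{C}^{1}$ $1$-submanifold of $V$ and providing the tubular bound $\operatorname{dist}(t,\mathcal{Z}_f)\leq |f(t)|/\delta$ on $V$ (follow the gradient flow $-\mathrm{sign}(f)\nabla f/|\nabla f|$). Whenever $\|g-f\|_1$ is small enough, $|\nabla g|\geq \delta$ on $V$, $\mathcal{Z}_g(\overline{U})\subset V$, and every point of $\mathcal{Z}_g$ lies within distance $\|g-f\|_\infty/\delta$ of $\mathcal{Z}_f$. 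Around each $t_0\in \mathcal{Z}_f(\overline{U})$, assuming WLOG $|\partial_2 f(t_0)|\geq \delta$, the implicit function theorem supplies a rectangle $W=I\times J$ on which $\mathcal{Z}_f\cap W$ is the graph of some $\phi_f\in \mathcal{C}^{1}(I)$. For all $g$ close enough to $f$, the analogous $\phi_g$ exists on $I$ and $\phi_g \to \phi_f$ in $\mathcal{C}^{1}(I)$, since $\phi_g'(x)=-\partial_1 g(x,\phi_g(x))/\partial_2 g(x,\phi_g(x))$ and $|\partial_2 g|\geq \delta$.

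Second, I would set up a cover adapted to $\overline{C}\Subset D$. Pick a compact $K$ with $\overline{C}\subset \operatorname{int} K\subset K\subset D$, cover $\mathcal{Z}_f\cap K$ by finitely many rectangles $W_i$ as above, each of diameter less than $\operatorname{dist}(K,\partial D)$, so that $W_i\subset D$, and let $(\chi_i)$ be a smooth family subordinate to $(W_i)$ with $\chi_i\geq 0$, $\sum_i\chi_i\leq 1$, and $\sum_i\chi_i\equiv 1$ on some neighborhood $N$ of $\mathcal{Z}_f\cap K$. For $g$ close enough to $f$, the tubular estimate forces $\mathcal{Z}_g\cap \overline{C}\subset N$, so
\begin{equation*}
\int_{\mathcal{Z}_g(C)}\varphi\,dH_1 \;\leq\; \sum_i \int_{\mathcal{Z}_g\cap W_i}\chi_i\varphi\,dH_1 \;=\; \sum_i \int_{I}(\chi_i\varphi)(x,\phi_{g,i}(x))\sqrt{1+\phi_{g,i}'(x)^2}\,dx,
\end{equation*}
which by the $\mathcal{C}^{1}$ convergence $\phi_{g,i}\to \phi_{f,i}$ and continuity of $\chi_i,\varphi$ tends to $\sum_i\int_{\mathcal{Z}_f\cap W_i}\chi_i\varphi\,dH_1 \leq \int_{\mathcal{Z}_f(D)}\varphi\,dH_1$ (using $\sum_i\chi_i\leq 1$, $W_i\subset D$, $\varphi\geq 0$). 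This yields the first inequality. The dual inequality follows by the symmetric argument with a cover chosen around $\mathcal{Z}_f\cap \overline{C}$ and strictly inside $D$: one dominates $\int_{\mathcal{Z}_f(C)}\varphi\,dH_1$ by $\sum_i\int_{\mathcal{Z}_f\cap W_i}\chi_i\varphi\,dH_1$ and recognizes this as the limit of $\sum_i\int_{\mathcal{Z}_g\cap W_i}\chi_i\varphi\,dH_1 \leq \int_{\mathcal{Z}_g(D)}\varphi\,dH_1$.

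The main obstacle is a combinatorial bookkeeping issue: one must choose the nested sets $\overline{C}\Subset K\Subset D$ and the chart diameters coherently so that, for every $g$ sufficiently close to $f$, every zero of $g$ meeting $\overline{C}$ is captured by some $W_i$ while every $W_i$ remains in $D$. Once this geometric setup is arranged, the analytic heart reduces to the $\mathcal{C}^{1}$ stability of implicit solutions and a routine arc-length computation on graphs, which is precisely why the conclusion requires the regularity of $f$ and not merely continuity.
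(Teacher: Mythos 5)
Your proposal is correct and follows essentially the same route as the paper: locally express the nodal lines of $f$ and of nearby $g$ as graphs via the implicit function theorem, establish $\mathcal{C}^1$-convergence of the graph functions, and globalize with a finite cover and partition of unity before the arc-length computation. The only cosmetic difference is that you obtain the capture of $\mathcal{Z}_g$ by the charts through a tubular-neighborhood/gradient-flow estimate, whereas the paper reaches the same one-to-one local correspondence through Lemmas \ref{lemma 6.8}--\ref{lemma 6.10} (an explicit lower bound on the separation between connected components); both are instances of quantitative IFT stability and serve the identical purpose.
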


Before proving this result, we will show how Proposition \ref{proposition 6.7} implies the continuity of $\Lambda$ at $f$.

\begin{proof}[Proof of Proposition \ref{proposition 6.6}]
	Let $\Omega_{n}$ be a sequence of increasing convex sets such that $\overline{\Omega_{n}} \subset \Omega_{n+1}$ and
	\begin{equation*}
		\bigcup_{n \geq 1}\Omega_{n} = \Omega.
	\end{equation*}
	From Proposition \ref{proposition 6.7}, one has
	\begin{equation*}
		\limsup_{n\to +\infty} \int_{\mathcal{Z}_{f}(\Omega_{n})} \varphi(t) H_{1}(\mathrm{d}t) \leq \liminf_{g \to f} \int_{\mathcal{Z}_{g}(\Omega)}\varphi(t) H_{1}(\mathrm{d}t).
	\end{equation*}

	The left hand part converges, in virtue of the dominated convergence theorem, to
	\begin{equation*}
		\int_{\mathcal{Z}_{f}(\Omega)}\varphi(t) H_{1}(\mathrm{d}t).
	\end{equation*}
	Hence,
	\begin{equation*}
		\int_{\mathcal{Z}_{f}(\Omega)}\varphi(t) H_{1}(\mathrm{d}t)\leq \liminf_{g \to f} \int_{\mathcal{Z}_{g}(\Omega)}\varphi(t) H_{1}(\mathrm{d}t)
	\end{equation*}
	Note that this inequality is always true and is independent of condition \eqref{equation 6.6.1}. The latter condition will be needed for the converse inequality.\medbreak

	This time, we let $\Omega^{n}$ be a sequence of non-increasing convex open sets such that $\overline{\Omega^{n+1}} \subset \Omega^{n}$ and
	\begin{equation*}
		\bigcap_{n \geq 1} \Omega^{n}=\overline{\Omega}.
	\end{equation*}
	Following the preceding reasoning, one has
	\begin{align*}
		\limsup_{g \to f}\int_{\mathcal{Z}_{g}(\Omega)} \varphi(t)H_{1}(\mathrm{d}t) &\leq \liminf_{n\to +\infty}\int_{\mathcal{Z}(\Omega^{n})} \varphi(t)H_{1}(\mathrm{d}t)\\
		&=\int_{\mathcal{Z}(\overline{\Omega})} \varphi(t)H_{1}(\mathrm{d}t)\\
		\text{(condition \ref{equation 6.6.1}) }&=\int_{\mathcal{Z}(\Omega)} \varphi(t)H_{1}(\mathrm{d}t).
	\end{align*}

	The two preceding inequality immediately implies that
	\begin{equation*}
		\lim_{g \to f}\Gamma(g) = \Gamma(f)
	\end{equation*}
	which finishes the proof.
\end{proof}

The latter part of this section is thus devoted to the proof of Proposition \ref{proposition 6.7}. The ideas involved into proving this result are summarised hereafter.\medbreak

First, we establish for $g \in \mathcal{C}^{1}(\overline{\Omega})$ sufficiently close to $f$ a one-to-one local correspondence between the  connected components of $f^{-1}(0)$ and the ones of $g^{-1}(0)$. What is meant with local correspondence is the existence of a covering of $f^{-1}(0)$  by open sets $V_{i}$ such that $\mathcal{Z}_{g}(V_{i})$ consists in exactly one connected component $\gamma_{g}$. If we reformulate the latter statement, this means that the set $V_{i}$ is crossed by exactly one nodal line of $g$. The rest of the proof consists into estimating the difference between $\gamma_{g}$ and $\gamma_{f}$ and showing that the latter is negligible as $g$ tends to $f$. To translate this local result into a global one, a partition of unity argument is later invoked.

\subsubsection*{Technical lemmas}

The first step of the proof consists of proving this local correspondence. We will need the following lemma, which establishes that the zeros of $f$ and $g$ cannot be too far away as long as $g$ is sufficiently close to $f$. \medbreak

First, we define some notations. As $f$ is regular, there exists an open set $W \subset \Omega$ containing $\mathcal{Z}(\overline{C})$ such that
\begin{equation*}
	\inf_{t \in W} |\nabla f(t)| \geq m
\end{equation*}
with $m>0$.\medbreak

On the other hand, the uniform continuity on $\overline{U}$ ensures that
\begin{equation*}
	|\nabla f(t)-\nabla f(s)| \leq \frac{m}{4}, \quad |s-t| \leq \delta.
\end{equation*}
for some $\delta>0$.

\begin{lemma}\label{lemma 6.8}
	Let $\varepsilon < \delta m/8$ and $g \in \mathcal{C}^{1}(\overline{U})$ such that $||f-g||_{1} \leq \varepsilon$.	If $f(t)=0$ and $|\partial_{1}f(t)| \geq m/2$, then there exists $|\eta | \leq 8\varepsilon/m$ such that
	\begin{equation*}
		g(t_{1}+\eta, t_{2}) = 0.
	\end{equation*}
\end{lemma}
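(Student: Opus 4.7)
\textbf{Plan for Lemma \ref{lemma 6.8}.} The idea is to reduce to a one-dimensional intermediate value argument on the horizontal segment $s \mapsto (t_1 + s, t_2)$. Since $|\partial_1 f(t)| \geq m/2$, the uniform continuity of $\nabla f$ on $\overline{U}$, quantified through the choice of $\delta$, will let us control the sign and magnitude of $\partial_1 f$ all along a short horizontal segment through $t$; then $f$ grows (or decreases) linearly fast enough along that segment to change sign by at least $2\varepsilon$, which the $C^1$-closeness of $g$ cannot compensate.

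\textbf{Step 1: Uniform lower bound on $\partial_1 f$ along a segment.} Without loss of generality assume $\partial_1 f(t) \geq m/2$ (otherwise replace $\partial_1 f$ by $-\partial_1 f$, i.e. run the argument with $\eta$ of opposite sign). Note $8\varepsilon/m \leq \delta$ since $\varepsilon < \delta m/8$, so for every $\eta$ with $|\eta| \leq 8\varepsilon/m$ the point $(t_1 + \eta, t_2)$ lies in $\overline{U}$ at distance at most $\delta$ from $t$, hence
\begin{equation*}
	|\partial_1 f(t_1+\eta, t_2) - \partial_1 f(t)| \leq |\nabla f(t_1+\eta,t_2)-\nabla f(t)| \leq m/4,
\end{equation*}
giving $\partial_1 f(t_1+\eta, t_2) \geq m/4 > 0$ throughout the segment.

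\textbf{Step 2: Sign change of $f$, then of $g$.} Integrating this lower bound along the segment yields
\begin{equation*}
	f(t_1 + 8\varepsilon/m, t_2) \geq f(t) + (m/4)(8\varepsilon/m) = 2\varepsilon,
\end{equation*}
and analogously $f(t_1 - 8\varepsilon/m, t_2) \leq -2\varepsilon$. Since $\|f-g\|_1 \leq \varepsilon$ implies $\|f-g\|_\infty \leq \varepsilon$, we obtain
\begin{equation*}
	g(t_1 + 8\varepsilon/m, t_2) \geq \varepsilon > 0, \quad g(t_1 - 8\varepsilon/m, t_2) \leq -\varepsilon < 0.
\end{equation*}

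\textbf{Step 3: Conclude by the intermediate value theorem.} The function $\eta \mapsto g(t_1 + \eta, t_2)$ is continuous on $[-8\varepsilon/m, 8\varepsilon/m]$ and changes sign, so it vanishes at some $\eta$ in this interval, which is exactly the claim.

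There is no real obstacle: the only subtlety is making sure that the segment of length $8\varepsilon/m$ stays inside the neighborhood on which $\nabla f$ oscillates by less than $m/4$, which is guaranteed by the assumption $\varepsilon < \delta m / 8$. The constants $8$ and $1/2$ are chosen precisely so that the gain $2\varepsilon$ in $f$ dominates the $C^0$-error $\varepsilon$ between $f$ and $g$ on both sides.
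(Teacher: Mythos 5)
Your proof is correct and follows essentially the same route as the paper's: restrict to the horizontal segment through $t$, use the modulus of continuity of $\nabla f$ (via the choice of $\delta$) to keep $\partial_1 f \geq m/4$ on the segment, integrate to get $|f| \geq 2\varepsilon$ at the endpoints, transfer the sign change to $g$ using $\|f-g\|_\infty \leq \varepsilon$, and invoke the intermediate value theorem. No meaningful difference from the paper's argument.
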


In particular, if $f(t)=0$, either $|\partial_{1}f(t)|$ or $|\partial_{2}f(t)|$ is no less than $m/2$ and the preceding lemma can be applied.

\begin{corollary}\label{corollary 6.9}
	There is a neighbourhood $V_{f}$ of $f \in \mathcal{C}^{1}(\overline{U})$  such that any $t \in \mathcal{Z}_{f}(\overline{C})$ can be associated with some $s \in \mathcal{Z}_{g}(D)$ such that $|s-t|\leq 8\varepsilon/m$.

\end{corollary}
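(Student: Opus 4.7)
The plan is to deduce the corollary directly from Lemma \ref{lemma 6.8} by covering both coordinate directions and arguing that the $8\varepsilon/m$-displacement keeps us inside $D$. The neighborhood $V_{f}$ will be taken as the $\mathcal{C}^{1}$-ball of radius $\varepsilon$ around $f$, where $\varepsilon$ is chosen small enough to satisfy three constraints simultaneously: $\varepsilon < \delta m/8$ (so Lemma \ref{lemma 6.8} applies), $8\varepsilon/m < \operatorname{dist}(\overline{C},D^{c})$ (so that any point within $8\varepsilon/m$ of $\overline{C}$ lies in $D$), and $8\varepsilon/m < \operatorname{dist}(\mathcal{Z}_{f}(\overline{C}),W^{c})$ (so that the coordinate-segment used in Lemma \ref{lemma 6.8} stays inside the gradient-control region $W$). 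The last two distances are strictly positive because $\overline{C}$ is compact, $\overline{C}\subset D$, and $\mathcal{Z}_{f}(\overline{C})$ is compactly contained in $W$.

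The key steps are as follows. First, fix $t\in \mathcal{Z}_{f}(\overline{C})\subset W$. Since $f$ is regular and $|\nabla f(t)|\geqslant m$, at least one of the partial derivatives satisfies
\begin{equation*}
  \max\bigl(|\partial_{1}f(t)|,|\partial_{2}f(t)|\bigr) \geqslant \tfrac{m}{\sqrt{2}} \geqslant \tfrac{m}{2}.
\end{equation*}
If $|\partial_{1}f(t)|\geqslant m/2$, Lemma \ref{lemma 6.8} produces $|\eta|\leqslant 8\varepsilon/m$ with $g(t_{1}+\eta,t_{2})=0$. If instead $|\partial_{2}f(t)|\geqslant m/2$, the same lemma applied after swapping the coordinate axes (which is a legitimate symmetry of the statement, since the hypotheses on $f$, $\nabla f$ and $\|f-g\|_{1}$ are coordinate-invariant) yields $|\eta|\leqslant 8\varepsilon/m$ with $g(t_{1},t_{2}+\eta)=0$. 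In either case set $s$ to be this nearby zero; then $|s-t|\leqslant 8\varepsilon/m$. By our choice of $\varepsilon$, the segment joining $t$ to $s$ lies inside $W$ (so the application of Lemma \ref{lemma 6.8} is legitimate) and the endpoint $s$ lies in $D$.

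The main obstacle, modest but genuine, is not the existence of $s$ per se — that is handed to us by Lemma \ref{lemma 6.8} — but the verification that the search interval used in that lemma remains inside the region $W$ where the gradient bound $|\nabla f|\geqslant m$ holds, since the conclusion of Lemma \ref{lemma 6.8} is only meaningful on such a region. A clean way to enforce this is to replace $W$ by a slightly smaller open set $W'\supset \mathcal{Z}_{f}(\overline{C})$ with $\overline{W'}\subset W$, obtained by taking a small tubular thickening of $\mathcal{Z}_{f}(\overline{C})$, and then require $\varepsilon$ to be smaller than the distance from $\overline{W'}$ to $W^{c}$ divided by the constant $8/m$. With this choice the ball $V_{f}:=\{g\in \mathcal{C}^{1}(\overline{U}):\|g-f\|_{1}\leqslant \varepsilon\}$ has the required property, and the conclusion $|s-t|\leqslant 8\varepsilon/m$ with $s\in \mathcal{Z}_{g}(D)$ follows for every $g\in V_{f}$ and every $t\in \mathcal{Z}_{f}(\overline{C})$.
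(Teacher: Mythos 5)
Your argument is correct and matches the paper's, which simply observes that one of $|\partial_{1}f(t)|,|\partial_{2}f(t)|$ must be at least $m/2$, applies Lemma \ref{lemma 6.8} coordinate-by-coordinate (the ``WLOG $\partial_{1}f(t)>0$'' in the paper covers both the sign and the axis swap), and implicitly relies on choosing $\varepsilon$ small enough that the point $s$ at distance $8\varepsilon/m$ from $\overline{C}$ still lies in $D$ --- a verification you spell out explicitly. Your extra precaution of keeping the search segment inside $W$ is unnecessary: the bound $|\nabla f(t)-\nabla f(s)|\leq m/4$ for $|s-t|\leq\delta$ is stated by the paper on all of $\overline{U}$, so the triangle-inequality step inside the proof of Lemma \ref{lemma 6.8} needs only the pointwise lower bound $|\partial_{1}f(t)|\geq m/2$ at the single zero $t$, not that the whole segment stays where $|\nabla f|\geq m$.
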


\begin{proof}
	Without loss of generality, assume that $\partial_{1}f(t)>0$ and consider the function
	\begin{equation*}
		F(\omega):=f(t_{1}+\omega,t_{2}).
	\end{equation*}
	whose derivatives is given by
	\begin{equation*}
		F'(\omega)=\partial_{1}f(t_{1}+\omega,t_{2})
	\end{equation*}
	For $|\omega| \leq \delta$, the triangle inequality ensures that
	\begin{align*}
		F'(\omega) \geq  \frac{m}{4},
	\end{align*}
	so that $f(t_{1}+8\varepsilon/m,t_{2})$ and $f(t_{1}-8\varepsilon/m,t_{2})$ have opposite sign and their respective absolute value are at least $2\varepsilon$. Thus, $g(t_{1}+8\varepsilon/m,t_{2})$ and $g(t_{1}-8\varepsilon/m,t_{2})$ have opposite sign and the conclusion follows from the Intermediate Value Theorem.
\end{proof}

\begin{lemma}\label{lemma 6.10}
	There is a finite open cover $V_{i}$ of $\mathcal{Z}_{f}(\overline{C})$ and a neighbourhood $V_{f}$ of $f$ in $\mathcal{C}^{1}(\overline{U})$  such that for any $g \in V_{f}$, the following assertions are satisfied
	\begin{enumerate}
		\item $\mathcal{Z}_{g}(V_{i})$ has exactly one connected component.
		\item
		\begin{equation*}
			\mathcal{Z}_{g}(C) \subset \bigcup_{i} V_{i} \subset D.
		\end{equation*}
	\end{enumerate}
	In particular,
	\begin{equation*}
		\mathcal{Z}_{g}(C) = \bigcup_{i} \mathcal{Z}_{g}(V_{i}) \subset \mathcal{Z}_{g}(D).
	\end{equation*}
\end{lemma}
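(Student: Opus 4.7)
The approach is to use the implicit function theorem to describe $f^{-1}(0)$ as a collection of monotone graphs near each of its points, to extract a finite subcover by compactness of $\mathcal{Z}_f(\overline{C})$, and finally to shrink the $\mathcal{C}^1$-neighbourhood $V_f$ enough that this graph structure transfers to every $g \in V_f$ while also preventing $g$ from vanishing outside the chosen cover.

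First, I would build local rectangular charts. Fix $t_0 \in \mathcal{Z}_f(\overline{C})$. Regularity of $f$ gives $\nabla f(t_0) \neq 0$, and after relabelling coordinates one may assume $\partial_1 f(t_0) > 0$. By continuity of $\nabla f$ on $\overline{U}$, there exist $a, b, m_{t_0} > 0$ such that the open rectangle $V_{t_0} := (t_{0,1}-a, t_{0,1}+a) \times (t_{0,2}-b, t_{0,2}+b)$ is contained in $D$ (possible since $\overline{C} \subset D$) and $\partial_1 f > m_{t_0}$ on $\overline{V_{t_0}}$. The implicit function theorem then yields a smooth $h$ with $\mathcal{Z}_f(V_{t_0}) = \{(h(s), s) : s \in (t_{0,2}-b, t_{0,2}+b)\}$. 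Shrinking $a$ and $b$ further, one can also guarantee the existence of $\eta_{t_0} > 0$ such that $f(t_{0,1}-a, s) \leq -\eta_{t_0}$ and $f(t_{0,1}+a, s) \geq \eta_{t_0}$ for every $s \in [t_{0,2}-b, t_{0,2}+b]$.

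Second, since $\mathcal{Z}_f(\overline{C})$ is closed in $\overline{C}$ and hence compact, I would extract a finite subcover $V_1, \ldots, V_N$ of such rectangles, and set $K := \overline{C} \setminus \bigcup_i V_i$. Being compact and disjoint from $\mathcal{Z}_f(\overline{C})$, the set $K$ satisfies $c := \inf_K |f| > 0$. Finally, I would define $V_f := \{g \in \mathcal{C}^1(\overline{U}) : ||f-g||_1 < \varepsilon\}$ with $\varepsilon$ strictly smaller than $c$, than every $m_i / 2$, and than every $\eta_i / 2$. For $g \in V_f$ and any $i$, on $V_i$ one has $\partial_1 g > m_i / 2 > 0$, so $g(\cdot, s)$ is strictly monotone on each horizontal slice; the sign-change condition on the vertical sides of $V_i$ is preserved by the $\sup$-norm closeness. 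The intermediate value theorem then yields, for each such slice, a unique $r(s)$ with $g(r(s), s) = 0$, so $\mathcal{Z}_g(V_i)$ is a graph, hence non-empty and connected. On $K$, $|g| \geq |f| - ||g-f||_\infty > 0$, so $\mathcal{Z}_g(\overline{C}) \subset \bigcup_i V_i \subset D$, which gives the second conclusion.

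The main technical point, though not a genuine obstacle, is the orchestration of the nested smallness conditions so that the $V_i$ and $\varepsilon$ can be chosen compatibly: the rectangles must be small enough for the implicit function theorem and for the inclusion in $D$, yet large enough in aggregate to cover $\mathcal{Z}_f(\overline{C})$, and $\varepsilon$ must be small enough both for the local monotonicity and sign-change properties to survive perturbation and to rule out zeros of $g$ on $K$. All ingredients (implicit function theorem, compactness, uniform continuity) are classical, so the argument reduces to careful bookkeeping.
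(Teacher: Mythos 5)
Your proof is correct, and it is a genuinely different implementation than the paper's. The paper's argument pivots on the quantity $d_g := \min$\,-distance between distinct connected components of $\mathcal{Z}_g(\overline{D})$, asserts (via an unproved appeal to a ``quantitative implicit function theorem'') that $\inf_{g\in V_f} d_g > 0$, then covers $\mathcal{Z}_f(\overline{C})$ by balls of diameter less than $d$ so that each ball meets at most one component, and finally invokes Corollary~\ref{corollary 6.9} for non-emptiness. Your approach instead builds coordinate-aligned rectangular charts $V_i$ on which one partial derivative of $f$ is bounded below and $f$ changes sign across the two corresponding faces, so that $\mathcal{Z}_f(V_i)$ and, after a $\mathcal{C}^1$-small perturbation, $\mathcal{Z}_g(V_i)$ are each a single monotone graph --- giving ``exactly one component'' directly from the intermediate value theorem plus strict monotonicity, without needing the paper's distance-between-components claim or Corollary~\ref{corollary 6.9}. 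You also handle the covering condition $\mathcal{Z}_g(C)\subset\bigcup_i V_i\subset D$ explicitly via the compactness argument on $K=\overline{C}\setminus\bigcup_i V_i$, a point the paper's proof leaves implicit. In effect you supply the ``quantitative implicit function theorem'' content that the paper elides, so your version is more self-contained. The only point worth tightening in the write-up is the order of quantifiers in the chart construction: $a$ should be fixed first (small enough for $\partial_1 f > m_{t_0}$ and a sign change at $s=t_{0,2}$), and then $b$ chosen small relative to $a$ so the sign change propagates to all $s\in[t_{0,2}-b,t_{0,2}+b]$; ``shrinking $a$ and $b$ further'' reads as if they could be shrunk independently, which is not quite what you want.
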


\begin{proof}
	Let $d_{g}$ be the minimal distance between two different connected components of $\mathcal{Z}_{g}(\overline{D})$. If there is no or only one connected component, $d_{g}$ is arbitrarily set to $1$.\medbreak

	As $\overline{D}$ is compact, two different connected different components of $f$ are always at a non zero distance and this bound holds uniformly in the sense there is a neighbourhood $V_{f}$ of $f$ such that
	\begin{equation*}
		d:=\inf_{g \in V_{f}} d_{g}>0.
	\end{equation*}

	The proof of this result is omitted here but it requires the use of a quantitative version of the implicit theorem. Such statement yields an explicit neighbourhood, whose size depends only the first derivatives of the function, on which the conclusion of the implicit theorem holds. In particular, in the latter neighbourhood, the zero set of the related function is composed of exactly one connected component. Applying this result to a function $g$ in a small neighbourhood of $f$, one sees that the size of the neighbourhood can be chosen so that it only depends on $f$. In particular, this prevents $\mathcal{Z}_{g}(\overline{D})$ from having two close connected components.\medbreak

	To construct the sets $V_{i}$, we choose a finite cover of $\mathcal{Z}_{f}(\overline{C})$ with open balls whose diameter is less than $d$ and such that $\mathcal{Z}_{f}(V_{i})$ is non empty. By definition, $\mathcal{Z}_{g}(V_{i})$ has at most one connected component for $g \in V_{f}$. On the other hand, Corollary \ref{corollary 6.9} ensures there is at least one. This finishes the proof.
\end{proof}

In virtue of Corollary \ref{lemma 6.10}, the unique connected component of $\mathcal{Z}_{g}(V_{i})$ can be parametrised by $(\gamma_{g,i}(t),t)$ or  $(t,\gamma_{g,i}(t))$ depending on whether $|\partial_{1}f|\geq m/2$ or $|\partial_{2}f|\geq m/2$ on $V_{i}$. Note that it is always possible to choose $V_{i}$ so that at least one of these two assumptions hold. Without loss of generality, we will only consider parametrisation of the second form.

\begin{lemma}\label{lemma 6.11}
	As $g \to f$, $\gamma_{i,g}$ converges to $\gamma_{i,f}$ for the topology of uniform convergence of the first derivatives on compact sets.
\end{lemma}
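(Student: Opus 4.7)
The plan is to exploit the implicit function theorem locally on $V_{i}$. Recall that, by the choice of $V_{i}$ in Lemma \ref{lemma 6.10}, we have $|\partial_{2}f(t)|\geq m/2$ throughout $V_{i}$ (WLOG), and therefore, for $g\in V_{f}$ chosen small enough in $\mathcal{C}^{1}(\overline{U})$, also $|\partial_{2}g|\geq m/4$ on $V_{i}$. In particular, for every $t$ in the first-coordinate projection $K$ of $V_{i}$, the slice $y\mapsto g(t,y)$ is strictly monotone on the corresponding vertical fiber, so the parametrisation $\gamma_{g,i}$ is well defined, $\mathcal{C}^{1}$, and satisfies
\begin{equation*}
    \gamma_{g,i}'(t) = -\frac{\partial_{1}g(t,\gamma_{g,i}(t))}{\partial_{2}g(t,\gamma_{g,i}(t))}.
\end{equation*}

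First I would establish the $\mathcal{C}^{0}$-convergence. Fix $t\in K$ and apply the mean value theorem to the univariate function $y\mapsto g(t,y)$ between $\gamma_{f,i}(t)$ and $\gamma_{g,i}(t)$. Since $g(t,\gamma_{g,i}(t))=0=f(t,\gamma_{f,i}(t))$, the identity
\begin{equation*}
    g(t,\gamma_{f,i}(t))-g(t,\gamma_{g,i}(t)) = (\gamma_{f,i}(t)-\gamma_{g,i}(t))\,\partial_{2}g(t,\xi_{t}),
\end{equation*}
combined with $g(t,\gamma_{f,i}(t))=g(t,\gamma_{f,i}(t))-f(t,\gamma_{f,i}(t))$, yields
\begin{equation*}
    |\gamma_{g,i}(t)-\gamma_{f,i}(t)|\leq \frac{4}{m}\,\|g-f\|_{\infty}\leq \frac{4}{m}\,\|g-f\|_{1},
\end{equation*}
uniformly in $t\in K$. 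This gives uniform convergence $\gamma_{g,i}\to \gamma_{f,i}$ on $K$.

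Second I would upgrade to $\mathcal{C}^{1}$-convergence via the formula above. The difference $\gamma_{g,i}'(t)-\gamma_{f,i}'(t)$ splits into a numerator-control term and a denominator-control term; since $|\partial_{2}f|$ and $|\partial_{2}g|$ are both uniformly bounded away from $0$ on $V_{i}$, it suffices to show
\begin{equation*}
    \sup_{t\in K}\bigl|\partial_{j}g(t,\gamma_{g,i}(t))-\partial_{j}f(t,\gamma_{f,i}(t))\bigr|\longrightarrow 0,\qquad j=1,2.
\end{equation*}
Writing this as the sum of $\partial_{j}g(t,\gamma_{g,i}(t))-\partial_{j}f(t,\gamma_{g,i}(t))$ (controlled by $\|g-f\|_{1}$) and $\partial_{j}f(t,\gamma_{g,i}(t))-\partial_{j}f(t,\gamma_{f,i}(t))$ (controlled by the uniform continuity of $\partial_{j}f$ on $\overline{U}$ together with the $\mathcal{C}^{0}$-step), both pieces tend to zero uniformly in $t\in K$.

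The step most likely to require extra care is the parametrisation issue: one must check that the domain of $\gamma_{g,i}$ as a graph in the $t$-direction contains, for $g$ sufficiently close to $f$, the compact set $K$ on which $\gamma_{f,i}$ is defined. This is where the choice of $V_{i}$ as a ball of diameter less than $d$ in Lemma \ref{lemma 6.10} is crucial: the unique connected component of $\mathcal{Z}_{g}(V_{i})$ cannot ``turn vertical'' inside $V_{i}$ because $|\partial_{2}g|$ remains bounded below, and it cannot exit $V_{i}$ without re-entering through the opposite side of the ball, which Corollary \ref{corollary 6.9} rules out by forcing its endpoints to lie within $8\varepsilon/m$ of those of $\gamma_{f,i}$. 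Once this geometric point is settled, the two analytic steps above close the argument.
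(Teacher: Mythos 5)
Your proof is correct and follows essentially the same route as the paper: parametrise the unique nodal component of $\mathcal{Z}_{g}(V_{i})$ as a graph, establish uniform convergence of the parametrisations, then upgrade to $\mathcal{C}^{1}$ via the implicit-function formula $\gamma_{i,g}'=-\partial_{1}g/\partial_{2}g$. Your fiber-wise mean-value argument for the $\mathcal{C}^{0}$ step and your explicit two-term decomposition of $\partial_{j}g(t,\gamma_{i,g}(t))-\partial_{j}f(t,\gamma_{i,f}(t))$ merely fill in details the paper leaves implicit (it invokes Corollary \ref{corollary 6.9} for the former and simply states the limit of the derivative quotient for the latter).
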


\begin{proof}
	The uniform convergence of $\gamma_{i,g}$ to $\gamma_{i,f}$ is a straightforward consequence of Corollary \ref{corollary 6.9} which bounds the distance between the cancellation points of $f$ and $g$.

	As for the convergence of derivatives, it follows from the formula
	\begin{equation*}
		\gamma_{i,g}'(t)=-\frac{\partial_{1}g(t,\gamma_{i,g}(t))}{\partial_{2}g(t,\gamma_{i,g}(t))} \to -\frac{\partial_{1}f(t,\gamma_{i,f}(t))}{\partial_{2}f(t,\gamma_{i,f}(t))} = \gamma_{i,f}'(t)
	\end{equation*}
\end{proof}

With Lemma \ref{lemma 6.11} proved, all the necessary tools for the proof of Proposition \ref{proposition 6.7} has been introduced.

\begin{proof}[Proof of Proposition \ref{proposition 6.7}]
	Consider a partition of unity associated with the open sets $V_{i}$, that is a a family of continuous functions $0 \leq \phi_{i} \leq 1$ with compact support included in $V_{i}$ satisfying
	\begin{equation*}
		\sum_{i=1}^{n} \phi_{i}(t) \leq 1, \quad t \in D,
	\end{equation*}
	with equality when $t \in C$.\medbreak

	Let $f_{n}$ be a sequence a sequence of functions converging to $f$ in $\mathcal{C}^{1}(\overline{U})$.
	The positivity of $\varphi$ implies that
	\begin{align*}
		\int_{\mathcal{Z}_{f_{n}}(C)} \varphi(t) H_{1}(dt)&= \sum_{i} \int_{\mathcal{Z}_{f_{n}}(V_{i}\cap C)} \phi_{i}(t)\varphi(t) H_{1}(\mathrm{d}t)\\
		&\leq \sum_{i}\int_{\mathcal{Z}_{f_{n}}(V_{i})}\phi_{i}(t)\varphi(t) H_{1}(\mathrm{d}t)\\
		&=\sum_{i} \int \phi(t, \gamma_{i,f_{n}}(t)) \varphi(t, \gamma_{i,f_{n}}(t))\sqrt{1+\gamma_{i,f_{n}}'(t,\gamma_{i,f_{n}}(t))^{2}} \mathrm{d}t.
	\end{align*}
	Lemma \ref{lemma 6.11} implies that the latter converges to
	\begin{align*}
		&\int \phi(t, \gamma_{i,f}(t)) \varphi(t, \gamma_{i,f}(t))\sqrt{1+\gamma_{i,f}'(t,\gamma_{i,f}(t))^{2}} \mathrm{d}t\\
		=& \sum_{i}\int_{\mathcal{Z}_{f}(V_{i})}\phi_{i}(t)\varphi(t) H_{1}(\mathrm{d}t)\\
		\leq& \sum_{i}\int_{\mathcal{Z}_{f}(D)}\phi_{i}(t)\varphi(t) H_{1}(\mathrm{d}t)\\
		\leq& \int_{\mathcal{Z}_{f}(D)}\varphi(t) H_{1}(\mathrm{d}t)
	\end{align*}

	Hence,
	\begin{equation*}
		\limsup_{g \to f} \int_{\mathcal{Z}_{g}(C)} \varphi(t) H_{1}(\mathrm{d}t) \leq \int_{\mathcal{Z}_{f}(D)} \varphi(t) H_{1}(\mathrm{d}t),
	\end{equation*}
	which ultimately concludes the proof of Proposition \ref{proposition 6.7}.
\end{proof}

\bibliographystyle{alpha}
\bibliography{Bibliography.bib}

\end{document}